\newtheorem{assumption}{Assumption}
\newcommand{\beq}{\begin{equation}}
\newcommand{\eeq}{\end{equation}}
\newcommand{\beqa}{\begin{eqnarray}}
\newcommand{\eeqa}{\end{eqnarray}}
\newcommand{\beqas}{\begin{eqnarray*}}
\newcommand{\eeqas}{\end{eqnarray*}}
\newcommand{\bi}{\begin{itemize}}
\newcommand{\ei}{\end{itemize}}
\newcommand{\ba}{\begin{array}}
\newcommand{\ea}{\end{array}}
\newcommand{\nn}{\nonumber}
\def\eqnok#1{(\ref{#1})}
\def\argmin{{\rm argmin}}
\def\vgap{\vspace*{.1in}}
\def\prob{{\rm Prob}}
\newcommand{\bbe}{\Bbb{E}}
\def\prob{\mathop{\rm Prob}}
\newcommand{\bbr}{\Bbb{R}}
\def\SO{{\cal SO}}
\def\cP{{\cal P}}
\def\cG{{\cal G}}
\def\cX{{\cal X}}
\def\cU{{\cal U}}
\title{
Accelerated Gradient Methods for \\
Nonconvex Nonlinear and Stochastic Programming
\thanks{
This research was partially supported by NSF
grants CMMI-1000347, CMMI-1254446, DMS-1319050, and ONR grant N00014-13-1-0036.
}
}
\author{
    Saeed Ghadimi
    \thanks{
    Department of Industrial and Systems Engineering,
    University of Florida, Gainesville, FL 32611,
       (email: {\tt sghadimi@ufl.edu}).}
\and
    Guanghui Lan
    \thanks{Department of Industrial and Systems Engineering,
    University of Florida, Gainesville, FL 32611,
       (email: {\tt glan@ise.ufl.edu}). }
}
\begin{document}

\maketitle

\begin{abstract}
In this paper, we generalize the well-known
Nesterov's accelerated gradient (AG) method, originally designed for
convex smooth optimization, to solve
nonconvex and possibly stochastic optimization problems.
We demonstrate that by properly specifying the stepsize policy,
the AG method exhibits the best known rate of convergence for solving general
nonconvex smooth optimization problems by using first-order information,
similarly to the gradient descent method. We then consider
an important class of composite optimization problems
and show that the AG method can solve
them uniformly, i.e., by using the same aggressive stepsize policy
as in the convex case, even if the problem turns out to be nonconvex.
We demonstrate that the AG method exhibits an optimal rate of convergence
if the composite problem is convex, and improves
the best known rate of convergence if the problem
is nonconvex. Based on the AG method,
we also present new nonconvex stochastic approximation
methods and show that they can improve a few existing rates of convergence
for nonconvex stochastic optimization. To the best of our knowledge, this is the first time that the
convergence of the AG method has been established for
solving nonconvex nonlinear  programming
in the literature.

\vspace{.1in}

\noindent {\bf Keywords: nonconvex optimization, stochastic programming, accelerated gradient, complexity}

\noindent {\bf AMS 2000 subject classification:} 62L20, 90C25, 90C15, 68Q25,

\end{abstract}

\vspace{0.1cm}
\setcounter{equation}{0}
\section{Introduction} \label{sec_intro}
In 1983, Nesterov in a celebrated work~\cite{Nest83-1} presented the accelerated gradient (AG) method for
solving a class of convex programming (CP) problems given by
\beq \label{NLP}
\Psi^* = \min_{x \in \bbr^n} \Psi(x).
\eeq
Here $\Psi(\cdot)$ is a convex
function with Lipschitz continuous gradients, i.e.,
$\exists \, L_\Psi > 0$ such that (s.t.)
\beq \label{f_smooth1}
\|\nabla \Psi(y) - \nabla \Psi(x)\| \le L_{\Psi} \|y-x\| \ \ \
\forall x, y \in \bbr^n.
\eeq
Nesterov shows that the number of iterations performed by this algorithm to find
a solution $\bar x$ s.t. $\Psi(\bar x) - \Psi^* \le \epsilon$
can be bounded by ${\cal O}(1/\sqrt{\epsilon})$, which significantly improves
the ${\cal O} (1/\epsilon)$ complexity bound possessed by the gradient descent method.
Moreover, in view of the classic complexity theory for convex
optimization by Nemirovski and Yudin~\cite{nemyud:83},
the above ${\cal O}(1/\sqrt{\epsilon})$ iteration complexity bound is not improvable for smooth
convex optimization when $n$ is sufficiently large.

Nesterov's AG method has attracted much interest recently due to
the increasing need to solve large-scale CP problems by using fast first-order methods.
In particular, Nesterov in an important work~\cite{Nest05-1} shows that by using the AG method
and a novel smoothing scheme, one can improve the complexity for solving
a broad class of saddle-point problems from ${\cal O}(1/\epsilon^2)$
to ${\cal O}(1/\epsilon)$. The AG method has also been generalized
by Nesterov~\cite{Nest07-1}, Beck and Teboulle~\cite{BecTeb09-2}, and Tseng~\cite{tseng08-1} 
to solve an emerging class of composite CP problems whose objective function is given by
the summation of a smooth component and another relatively simple nonsmooth component (e.g., the $l_1$ norm).
Lan~\cite{Lan10-3} further shows that the AG method, when employed with proper stepsize
policies, is optimal for solving not only smooth CP problems, but also general (not necessarily
simple) nonsmooth
and stochastic CP problems. More recently, some key elements of the AG method, e.g.,
the multi-step acceleration scheme, have been adapted to significantly
improve the convergence properties of a few other first-order methods (e.g., level methods~\cite{Lan13-2}).
However, to the best of our knowledge,
all the aforementioned developments require explicitly the convexity assumption about $\Psi$.
Otherwise, if $\Psi$ in \eqnok{NLP} is not necessarily convex, it is unclear whether the AG method
still converges or not.

This paper aims to generalize the AG method, originally designed for smooth
convex optimization, to solve more general nonlinear programming (NLP) (possibly nonconvex and stochastic)
problems, and thus to present a unified treatment and analysis for convex, nonconvex
and stochastic optimization. 
While this paper focuses on the theoretical development of the AG method,
our study has also been motivated by the following more practical considerations in
solving nonlinear programming problems. First, many general nonlinear objective functions are locally
convex. A unified treatment for both convex and nonconvex problems will help us to make use
of such local convex properties. 
In particular, we intend to
understand whether one can apply the well-known aggressive stepsize policy in the AG method
under a more general setting to benefit from such local convexity.
Second, many nonlinear objective functions
arising from sparse optimization (e.g., \cite{ChGeWangYe12-1,FengMitPangShenW13-1}) and machine learning (e.g., \cite{FanLi01-1,Mairal09})
consist of both convex and nonconvex components, corresponding to
the data fidelity and sparsity regularization terms respectively. One interesting question is
whether one can design more efficient algorithms for solving these  nonconvex composite problems
by utilizing their convexity structure.
Third, the convexity of some objective functions represented by a black-box procedure
is usually unknown, e.g., in simulation-based optimization~\cite{Andr98-1,Fu02-1,AsmGlynn00,Law07}.
A unified treatment and analysis can thus help us to deal with such structural ambiguity.
Fourth, in some cases, the objective functions are nonconvex with respect to (w.r.t.) a few decision variables
jointly, but convex w.r.t. each one of them separately. Many machine learning/imaging processing
problems are given in this form (e.g., \cite{Mairal09}). Current practice is to first run
an NLP solver to find a stationary point, and then a CP solver after one variable (e.g., dictionary
in \cite{Mairal09}) is fixed. A more powerful, unified treatment for both convex and nonconvex problems
is desirable to better handle these types of problems.

Our contribution mainly lies in the following three aspects.
First, we consider the classic NLP problem given in the form of \eqnok{NLP}, where
$\Psi(\cdot)$ is a smooth (possibly nonconvex) function satisfying~\eqnok{f_smooth1} (denoted
by $\Psi \in {\cal C}_{L_\Psi}^{1,1}(\bbr^n)$). In addition, we assume that $\Psi(\cdot)$
is bounded from below.
We demonstrate that the AG method, when employed with a certain stepsize policy,
can find an $\epsilon$-solution of \eqnok{NLP}, i.e., a point
$\bar x$ such that $\|\nabla \Psi(\bar x)\|^2 \le \epsilon$, in at most ${\cal O}(1/\epsilon)$
iterations, which is the best-known complexity bound possessed by
first-order methods to solve general NLP problems (e.g., the gradient descent method~\cite{Nest04,CarGouToi10-1}
and the trust region method~\cite{GraSarToi08}).
Note that if $\Psi$ is convex and a more aggressive stepsize policy
is applied in the AG method, then the aforementioned complexity bound can be improved to
${\cal O}(1/\epsilon^{1/3})$.

Second, we consider a class of composite problems (see, e.g.,  
Lewis and Wright \cite{LewWri09-1}, Chen et al.~\cite{ChGeWangYe12-1}) given by
\beq \label{comp_nocvx_prblm}
\min_{x \in \bbr^n} \Psi(x) + \cX(x), \ \   \Psi(x) := f(x)+ h(x),
\eeq
where $f \in {\cal C}_{L_f}^{1,1}(\bbr^n)$ is possibly nonconvex,
$h \in {\cal C}_{L_h}^{1,1}(\bbr^n)$ is convex, and
$\cX$ is a simple convex (possibly non-smooth)
function with bounded domain (e.g., $\cX(x) = {\cal I}_X(x)$ with ${\cal I}_X(\cdot)$ being the indicator function of a convex
compact set $X \subset \bbr^n$). Clearly, we have $\Psi \in {\cal C}_{L_\Psi}^{1,1}(\bbr^n)$
with $L_\Psi=L_f+L_h$.
Since $\cX$ is possibly non-differentiable, we need to employ a different termination criterion
based on the gradient mapping $\cG(\cdot, \cdot, \cdot)$ (see \eqnok{def_proj_grad}) to
analyze the complexity of the AG method. Observe, however, that if $\cX(x) = 0$, then we have
$\cG(x,\nabla \Psi(x),c) = \nabla \Psi(x)$ for any $c>0$. We show that the same aggressive stepsize policy as the AG method
for the convex problems can be applied for solving problem \eqnok{comp_nocvx_prblm} no matter
if $\Psi(\cdot)$ is convex or not. More specifically, the AG method exhibits an optimal rate of convergence
in terms of functional optimality gap if $\Psi(\cdot)$  turns out to be convex.
In addition, we show that one can find a solution
$\bar x \in \bbr^n$ s.t. $\|\cG(x,\nabla \Psi(x),c)\|^2 \le \epsilon$ in at most
\[
{\cal O}\left\{\left(\frac{L_\Psi^2}{\epsilon}\right)^{1/3}+\frac{L_\Psi L_f}{\epsilon}\right\}
\]
iterations. The above complexity bound improves
the one established in \cite{GhaLanZhang13-1} for the projected gradient method
applied to problem \eqnok{comp_nocvx_prblm} in terms of their
dependence on the Lipschtiz constant $L_h$.
In addition, it is significantly better than the latter bound when $L_f$
is small enough (see Section~\ref{comp_nocvx_sec} for more details).

Third, we consider stochastic NLP problems in the form of \eqnok{NLP} or \eqnok{comp_nocvx_prblm},
where only noisy first-order information about $\Psi$ is available via
subsequent calls to a stochastic oracle ($\SO$).
More specifically,
at the $k$-th call, $x_k \in \bbr^n$ being the input,
the $\SO$ outputs  a stochastic gradient
$G(x_k, \xi_k)$, where $\{\xi_k\}_{k \ge 1}$ are random vectors whose distributions $P_k$ are supported on $\Xi_k \subseteq \bbr^d$.
The following assumptions are also made for the stochastic gradient $G(x_k, \xi_k)$.

\vgap

\begin{assumption} \label{assump_st_grad}
For any $x \in \bbr^n$ and $k \ge 1$, we have
\beqa
&\mbox{a)}& \, \, \bbe [G(x, \xi_k)] = \nabla \Psi(x), \label{ass1.a} \\
&\mbox{b)} & \, \, \bbe \left[ \|G(x, \xi_k) - \nabla \Psi(x)\|^2 \right] \le \sigma^2. \label{ass1.b}
\eeqa

\end{assumption}
\vgap

\noindent Currently, the randomized stochastic gradient (RSG) method initially
studied by Ghadimi and Lan~\cite{GhaLan12}
and later improved in \cite{GhaLanZhang13-1,DangLan13-1}
seems to be the only available stochastic approximation (SA) algorithm
for solving the aforementioned general stochastic NLP problems,
while other SA methods (see, e.g., \cite{RobMon51-1,NJLS09-1,Spall03,pol92,Lan10-3,GhaLan12,GhaLan10-1b})
require the convexity assumption about $\Psi$.
However, the RSG method and its variants are only nearly optimal
for solving convex SP problems.
Based on the AG method, we present a randomized stochastic AG (RSAG) method
for solving general stochastic NLP problems and show that if
$\Psi(\cdot)$ is convex, then
the RSAG exhibits an optimal rate of convergence in terms of functional optimality gap,
similarly to the accelerated SA method in~\cite{Lan10-3}. In this case, the complexity bound
in \eqnok{residual}  in terms of the residual of gradients can be improved to
\[
{\cal O}\left(\frac{L_\Psi^\frac{2}{3}}{\epsilon^\frac{1}{3}}+\frac{L_\Psi^\frac{2}{3}\sigma^2}{\epsilon^\frac{4}{3}}\right).
\]
Moreover, if  $\Psi(\cdot)$ is nonconvex,
then the RSAG method can find an $\epsilon$-solution
of \eqnok{NLP}, i.e., a point $\bar x$ s.t.
$\bbe[\|\nabla \Psi(\bar x)\|^2] \le \epsilon$
in at most
\beq \label{residual}
{\cal O} \left(\frac{L_\Psi}{\epsilon}+\frac{L_\Psi\sigma^2} {\epsilon^2}\right)
\eeq
calls to the $\SO$.
We also generalize these complexity analyses to a class of nonconvex stochastic composite
optimization problems by introducing a mini-batch approach into the RSAG method
and improve a few complexity results presented in \cite{GhaLanZhang13-1}
for solving these stochastic composite optimization problems.

This paper is organized as follows. In Section~\ref{sec_AC-DG}, we present
the AG algorithm and establish its convergence properties for solving problems \eqnok{NLP}
and \eqnok{comp_nocvx_prblm}. We then generalize the AG method for solving
stochastic nonlinear and composite optimization problems in Section~\ref{sec_RRSAG}.
Some brief concluding remarks are given in Section~\ref{sec_concl}.

\setcounter{equation}{0}
\section{The accelerated gradient algorithm}
\label{sec_AC-DG}
Our goal in this section is to show that the AG method,
which is originally designed for smooth convex optimization, also converges for solving nonconvex
optimization problems after incorporating some proper modification.
More specifically, we first present an AG method for solving a general class of nonlinear optimization problems
in Subsection~\ref{nocvx_sec} and then describe the AG method
for solving a special class of nonconvex composite optimization problems
in Subsection~\ref{comp_nocvx_sec}.

\subsection{Minimization of smooth functions}\label{nocvx_sec}
In this subsection, we assume that $\Psi(\cdot)$ is a differentiable nonconvex function, bounded from below and
its gradient satisfies in \eqnok{f_smooth1}.
It then follows that (see, e.g., \cite{Nest04})
\beq \label{f_smooth2}
|\Psi(y) - \Psi(x) - \langle \nabla \Psi(x), y - x \rangle |
\le \frac{L_{\Psi}}{2} \|y - x\|^2 \ \ \
\forall x, y  \in \bbr^n.
\eeq
While the gradient descent method converges for solving the above class of nonconvex optimization problems,
it does not achieve the optimal rate of convergence, in terms of the functional optimality gap, when $\Psi(\cdot)$ is convex.
On the other hand, the original AG method in \cite{Nest83-1} is optimal for solving convex optimization problems,
but does not necessarily converge for solving nonconvex optimization problems. Below, we present a
modified AG method and show that by properly specifying the stepsize
policy, it not only achieves the optimal rate of convergence for convex optimization,
but also exhibits the best-known rate of convergence as shown in ~\cite{Nest04,CarGouToi10-1}
for solving general smooth NLP problems by using first-order methods.

\begin{algorithm} [H]
	\caption{The accelerated gradient (AG) algorithm}
	\label{algAC-GD}
	\begin{algorithmic}

\STATE Input:
$x_0 \in \bbr^n$, $\{\alpha_k\}$ s.t. $\alpha_1 = 1$ and $\alpha_k \in (0,1)$ for any $k \ge 2$,
$\{\beta_k > 0\}$, and $\{\lambda_k > 0\}$.
\STATE 0. Set the initial points $x^{ag}_0 = x_0$ and $k=1$.
\STATE 1. Set
\beqa
x^{md}_k &=& (1 - \alpha_k) x^{ag}_{k-1}
+ \alpha_k x_{k-1}.\label{Ne}
\eeqa
\STATE 2. Compute $\nabla \Psi(x^{md}_k)$ and set
\beqa
x_k &=& x_{k-1}-\lambda_k \nabla \Psi(x^{md}_k), \label{Ne3}\\
x^{ag}_k &=& x^{md}_k - \beta_k \nabla \Psi(x^{md}_k). \label{Ne4}
\eeqa
\STATE 3. Set $k \leftarrow k+1$ and go to step 1.
	\end{algorithmic}
\end{algorithm}

\vgap

Note that, if $\beta_k=\alpha_k \lambda_k \ \ \forall k \ge 1$, then we have $x^{ag}_k = \alpha_k x_k +(1-\alpha_k)x^{ag}_{k-1}$. In this case,
the above AG method is equivalent to one of the simplest variants of
the well-known Nesterov's method (see, e.g., \cite{Nest04}).
On the other hand, if $\beta_k= \lambda_k, \ \ k = 1, 2, \ldots$,
then it can be shown by induction that $x^{md}_k=x_{k-1}$ and $x^{ag}_k= x_k$.
In this case, Algorithm~\ref{algAC-GD} reduces to the gradient descent method.
We will show in this subsection that the above AG method actually converges for
different selections of $\{\alpha_k\}$, $\{\beta_k\}$,
and $\{\lambda_k\}$ in both convex and nonconvex case.

To establish the convergence of the above AG method,
we need the following simple technical result (see
Lemma 3 of \cite{Lan13-2} for a slightly more general result).

\begin{lemma} \label{Gamma_division}
Let $\{\alpha_k\}$ be the stepsizes in
the AG method and the sequence $\{\theta_k\}$ satisfies
\beq \label{general_cond}
\theta_k \le (1-\alpha_k) \theta_{k-1}+\eta_k, \ \ k=1,2,\ldots,
\eeq
where
\beq \label{def_Gamma}
\Gamma_{k} :=
\left\{
\begin{array}{ll}
 1, & k = 1,\\
(1 - \alpha_{k}) \Gamma_{k-1}, & k \ge 2.
\end{array} \right.
\eeq
Then we have $\theta_k \le \Gamma_k \sum_{i=1}^k ( \eta_i/\Gamma_i)$
for any $k \ge 1$.
\end{lemma}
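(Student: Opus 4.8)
The plan is to prove this by dividing the recursion \eqnok{general_cond} by $\Gamma_k$ and telescoping (equivalently, by a short induction on $k$). The core idea is that the weights $\Gamma_k$ defined in \eqnok{def_Gamma} are constructed precisely so that, after normalizing by them, the \emph{multiplicative} recursion \eqnok{general_cond} becomes an \emph{additive} telescoping inequality. First I would record that $\Gamma_k > 0$ for every $k \ge 1$: indeed $\Gamma_1 = 1$, and since $\alpha_k \in (0,1)$ for $k \ge 2$ we have $1 - \alpha_k \in (0,1)$, so \eqnok{def_Gamma} keeps each $\Gamma_k$ strictly positive. This justifies dividing and multiplying by $\Gamma_k$ while preserving the direction of the inequalities.

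For the base case $k = 1$, since $\alpha_1 = 1$ the hypothesis \eqnok{general_cond} reads $\theta_1 \le (1-\alpha_1)\theta_0 + \eta_1 = \eta_1$, and because $\Gamma_1 = 1$ this is exactly the claimed bound $\theta_1 \le \Gamma_1 (\eta_1/\Gamma_1)$. Note how the degenerate factor $1 - \alpha_1 = 0$ conveniently annihilates the otherwise unspecified term $\theta_0$.

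For the main step I would divide \eqnok{general_cond} by $\Gamma_k$ for $k \ge 2$. The key algebraic simplification is that $\Gamma_k = (1-\alpha_k)\Gamma_{k-1}$ gives $(1-\alpha_k)/\Gamma_k = 1/\Gamma_{k-1}$, so the recursion collapses to
\[
\frac{\theta_k}{\Gamma_k} \le \frac{\theta_{k-1}}{\Gamma_{k-1}} + \frac{\eta_k}{\Gamma_k}.
\]
Summing this inequality from $i = 2$ to $k$ telescopes the leading terms, and inserting the base case $\theta_1/\Gamma_1 \le \eta_1/\Gamma_1$ yields $\theta_k/\Gamma_k \le \sum_{i=1}^k \eta_i/\Gamma_i$. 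Multiplying through by the positive quantity $\Gamma_k$ then gives the assertion.

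There is no genuinely hard step here; the only points requiring care are the positivity of $\Gamma_k$ (needed to preserve the inequalities under division and multiplication) and the degenerate base case. The entire argument rests on recognizing that \eqnok{def_Gamma} supplies exactly the ``integrating factor'' that converts the geometric contraction $1-\alpha_k$ into a clean telescoping sum.
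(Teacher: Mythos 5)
Your proposal is correct and follows essentially the same route as the paper: observe $\Gamma_k>0$, divide \eqnok{general_cond} by $\Gamma_k$ so that $(1-\alpha_k)/\Gamma_k=1/\Gamma_{k-1}$ collapses the recursion into a telescoping sum, handle $k=1$ via $\alpha_1=1$ killing the $\theta_0$ term, and multiply back by $\Gamma_k$. No gaps.
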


\begin{proof}
Noting that $\alpha_1 = 1$ and $\alpha_k \in (0,1)$
for any $k \ge 2$. These observations together with \eqnok{def_Gamma}
then imply that $\Gamma_k > 0$ for any $k \ge 1$.
Dividing both sides of \eqnok{general_cond} by $\Gamma_k$, we obtain
\[
\frac{\theta_1}{\Gamma_1} \le \frac{(1-\alpha_1) \theta_{0}}{\Gamma_{1}} + \frac{\eta_1}{\Gamma_1}
= \frac{\eta_1}{\Gamma_1}
\]
and
\[
\frac{\theta_i}{\Gamma_i} \le \frac{(1-\alpha_i) \theta_{i-1}}{\Gamma_i} + \frac{\eta_i}{\Gamma_i} =
\frac{\theta_{i-1}}{\Gamma_{i-1}} + \frac{\eta_i}{\Gamma_i}, \ \ \ \forall i \ge 2.
\]
The result then immediately follows by summing up the above inequalities
and rearranging the terms.
\end{proof}

\vgap

We are now ready to describe the main convergence properties of
the AG method.

\begin{theorem}\label{main_nocvx_theom}
Let $\{x^{md}_k, x^{ag}_k\}_{k \ge 1}$ be computed by Algorithm~\ref{algAC-GD}
and $\Gamma_k$ be defined in \eqnok{def_Gamma}.

\begin{itemize}
\item [a)] If $\{\alpha_k\}$, $\{\beta_k\}$, and $\{\lambda_k\}$ are chosen such that
\beq
C_k := 1-L_\Psi \lambda_k - \frac{L_\Psi (\lambda_k-\beta_k)^2}{2 \alpha_k \Gamma_k \lambda_k}\left(\sum_{\tau=k}^{N} \Gamma_{\tau}\right)>0 \label{def_Ck},
\eeq
then for any $N \ge 1$, we have
\beq
\min\limits_{k=1,...,N} \|\nabla \Psi(x^{md}_k)\|^2
\le \frac{\Psi(x_0) - \Psi^*}{\sum_{k=1}^{N} \lambda_k C_k}.\label{main_nocvx}
\eeq

\item [b)] Suppose that $\Psi(\cdot)$ is convex and that
an optimal solution $x^*$ exists for problem \eqnok{NLP}.
If $\{\alpha_k\}$, $\{\beta_k\}$, and $\{\lambda_k\}$ are chosen such that
\begin{align}
\alpha_k \lambda_k \le \beta_k < \frac{1}{L_\Psi},
\label{stepsize_assum1} \\
\frac{\alpha_1}{\lambda_1 \Gamma_1} \ge \frac{\alpha_2}{\lambda_2 \Gamma_2} \ge \ldots,\label{stepsize_equal}
\end{align}
then for any $N \ge 1$, we have
\beqa
\min\limits_{k=1,...,N} \|\nabla \Psi(x^{md}_k)\|^2 &\le&
\frac{\|x_0 - x^*\|^2}{\lambda_1 \sum_{k=1}^N
\Gamma_k^{-1} \beta_k (1-L_\Psi \beta_k)}, \label{cvx_grad}\\
\Psi(x^{ag}_N)-\Psi(x^*) &\le& \frac{\Gamma_N \|x_0 - x^*\|^2}
{2\lambda_1}. \label{cvx_fun}
\eeqa
\end{itemize}
\end{theorem}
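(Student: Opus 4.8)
The plan is to reduce each part to a recursion of the form $\theta_k\le(1-\alpha_k)\theta_{k-1}+\eta_k$ and then apply Lemma~\ref{Gamma_division} with the $\Gamma_k$ of \eqnok{def_Gamma}. Both parts start from the same descent estimate for the extrapolation step \eqnok{Ne4}. Abbreviating $g_k:=\nabla\Psi(x^{md}_k)$ and using $x^{ag}_k-x^{md}_k=-\beta_k g_k$ in the smoothness bound \eqnok{f_smooth2}, I get
\[
\Psi(x^{ag}_k)\le\Psi(x^{md}_k)-\beta_k\left(1-\tfrac{L_\Psi\beta_k}{2}\right)\|g_k\|^2 .
\]
Everything then comes down to how one bounds $\Psi(x^{md}_k)$ from above, and this is exactly where the convex and nonconvex arguments diverge.

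For part (b), I would exploit convexity. For any $x$, the subgradient inequality applied at $x^{ag}_{k-1}$ and at $x$ with weights $1-\alpha_k$ and $\alpha_k$, together with the identity $x^{md}_k-(1-\alpha_k)x^{ag}_{k-1}-\alpha_k x=\alpha_k(x_{k-1}-x)$ coming from \eqnok{Ne}, yields $\Psi(x^{md}_k)\le(1-\alpha_k)\Psi(x^{ag}_{k-1})+\alpha_k\Psi(x)+\alpha_k\langle g_k,x_{k-1}-x\rangle$. Next I would turn the inner product into a telescoping term: since $x_k=x_{k-1}-\lambda_k g_k$ by \eqnok{Ne3}, the three-term identity gives $\langle g_k,x_{k-1}-x\rangle=\tfrac{1}{2\lambda_k}(\|x_{k-1}-x\|^2-\|x_k-x\|^2)+\tfrac{\lambda_k}{2}\|g_k\|^2$. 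Substituting, the $\|g_k\|^2$ coefficient becomes $\tfrac{\alpha_k\lambda_k}{2}-\beta_k+\tfrac{L_\Psi\beta_k^2}{2}$, which the hypothesis \eqnok{stepsize_assum1} forces to be at most $-\tfrac{\beta_k}{2}(1-L_\Psi\beta_k)\le0$. This produces a recursion in $\theta_k:=\Psi(x^{ag}_k)-\Psi(x)$ to which Lemma~\ref{Gamma_division} applies. After dividing by $\Gamma_k$, the distance terms $\tfrac{\alpha_k}{2\lambda_k\Gamma_k}(\|x_{k-1}-x\|^2-\|x_k-x\|^2)$ are summed by parts; the monotonicity hypothesis \eqnok{stepsize_equal} makes the coefficients nonincreasing, so the whole telescoping sum is bounded by its first term $\tfrac{1}{2\lambda_1}\|x_0-x\|^2$ (using $\alpha_1=\Gamma_1=1$). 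Taking $x=x^*$ and discarding the nonpositive gradient term gives \eqnok{cvx_fun}; keeping the gradient term and using $\Psi(x^{ag}_N)-\Psi(x^*)\ge0$ gives the summed bound $\sum_k\Gamma_k^{-1}\beta_k(1-L_\Psi\beta_k)\|g_k\|^2\le\lambda_1^{-1}\|x_0-x^*\|^2$, from which \eqnok{cvx_grad} follows by pulling out $\min_k\|g_k\|^2$.

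For part (a) convexity is unavailable, so I would instead bound $\Psi(x^{md}_k)$ by applying smoothness \eqnok{f_smooth2} between $x^{md}_k$ and the previous extrapolation point $x^{ag}_{k-1}$, using the gradient $g_k$ that we already control. Since \eqnok{Ne} gives $x^{md}_k-x^{ag}_{k-1}=\alpha_k(x_{k-1}-x^{ag}_{k-1})$, introducing the discrepancy sequence $d_k:=x_k-x^{ag}_k$ (with $d_0=0$, so that $d_{k-1}=x_{k-1}-x^{ag}_{k-1}$) leads to
\[
\Psi(x^{ag}_k)\le\Psi(x^{ag}_{k-1})+\alpha_k\langle g_k,d_{k-1}\rangle+\tfrac{L_\Psi\alpha_k^2}{2}\|d_{k-1}\|^2-\beta_k\left(1-\tfrac{L_\Psi\beta_k}{2}\right)\|g_k\|^2 .
\]
The crucial structural fact is that \eqnok{Ne}--\eqnok{Ne4} force $d_k$ to obey the linear recursion $d_k=(1-\alpha_k)d_{k-1}+(\beta_k-\lambda_k)g_k$, whence $d_k=\Gamma_k\sum_{i=1}^k\Gamma_i^{-1}(\beta_i-\lambda_i)g_i$; in particular $d_k\equiv0$ when $\beta_k=\lambda_k$, consistent with the reduction to gradient descent.

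Summing the displayed inequality over $k=1,\dots,N$ and using $\Psi(x^{ag}_0)=\Psi(x_0)$ and $\Psi(x^{ag}_N)\ge\Psi^*$, the proof reduces to showing that the net coefficient of each $\|g_k\|^2$ is at least $\lambda_k C_k$. The main obstacle is precisely the control of the accumulated, sign-indefinite momentum error $\sum_k[\alpha_k\langle g_k,d_{k-1}\rangle+\tfrac{L_\Psi\alpha_k^2}{2}\|d_{k-1}\|^2]$: substituting the closed form for $d_{k-1}$ turns this into a double sum over pairs $i<k$ of inner products $\langle g_k,g_i\rangle$, which I would dominate by Young's and Cauchy--Schwarz inequalities and then reorganize by exchanging the order of summation. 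It is this exchange that converts the per-step weights into the backward cumulative factor $\sum_{\tau=k}^N\Gamma_\tau$ appearing in \eqnok{def_Ck}, and the delicate point is to choose the splitting so that the residual coefficient of $\|g_k\|^2$ is exactly $-\lambda_k C_k$ rather than something weaker; the positivity hypothesis \eqnok{def_Ck} is then what guarantees this coefficient stays nonnegative. Dividing by $\sum_k\lambda_k C_k$ finally yields \eqnok{main_nocvx}.
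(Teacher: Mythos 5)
Your argument for part (b) is correct and is essentially the paper's own proof: convexity at $x^{md}_k$, the three-point identity for the $\lambda_k$-step, the coefficient $\tfrac{\alpha_k\lambda_k}{2}-\beta_k+\tfrac{L_\Psi\beta_k^2}{2}\le-\tfrac{\beta_k}{2}(1-L_\Psi\beta_k)$ forced by \eqnok{stepsize_assum1}, Lemma~\ref{Gamma_division}, and the summation-by-parts bound using \eqnok{stepsize_equal}. Nothing to add there.

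Part (a) is where there is a genuine gap, and it sits exactly at the step you flag as ``delicate.'' You track $\Psi(x^{ag}_k)$ and apply smoothness along $x^{md}_k\to x^{ag}_k$ and along $x^{ag}_{k-1}\to x^{md}_k$, both with the gradient $g_k$ at $x^{md}_k$. This leaves the \emph{first-order} error $\alpha_k\langle g_k,d_{k-1}\rangle$, which carries no factor of $L_\Psi$ and no small factor at all. The paper instead tracks $\Psi(x_k)$ and applies smoothness along the $\lambda_k$-step $x_{k-1}\to x_k$ with the gradient at $x_{k-1}$; the mismatch is then the \emph{gradient difference} $\Delta_k=\nabla\Psi(x_{k-1})-\nabla\Psi(x^{md}_k)$, which by \eqnok{f_smooth1} and \eqnok{Ne} satisfies $\|\Delta_k\|\le L_\Psi(1-\alpha_k)\|x^{ag}_{k-1}-x_{k-1}\|$. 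After Young's inequality this contributes exactly $\tfrac{L_\Psi\lambda_k^2}{2}\|g_k\|^2$ (turning the descent coefficient into $\lambda_k(1-L_\Psi\lambda_k)$) plus $\tfrac{L_\Psi(1-\alpha_k)^2}{2}\|x^{ag}_{k-1}-x_{k-1}\|^2$, and the latter, after Jensen's inequality with the weights $\alpha_\tau/\Gamma_\tau$ (summing to $1/\Gamma_k$ by \eqnok{sum_Gamma}) and an exchange of the order of summation, produces precisely the term $\tfrac{L_\Psi(\lambda_k-\beta_k)^2}{2\alpha_k\Gamma_k}\sum_{\tau=k}^N\Gamma_\tau$ in $\lambda_kC_k$. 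In your decomposition the leading descent coefficient is $\beta_k(1-\tfrac{L_\Psi\beta_k}{2})$ rather than $\lambda_k(1-\tfrac{L_\Psi\lambda_k}{2})$, and any Young split $\alpha_k\langle g_k,d_{k-1}\rangle\le\tfrac{\alpha_k\rho_k}{2}\|g_k\|^2+\tfrac{\alpha_k}{2\rho_k}\|d_{k-1}\|^2$ introduces a free parameter $\rho_k$ with no canonical choice; no choice reproduces the pair of coefficients that generate $C_k$ in \eqnok{def_Ck}. So while your route can plausibly be pushed to an $O(L_\Psi(\Psi(x_0)-\Psi^*)/N)$ bound with \emph{some} positive constants, it does not deliver the bound \eqnok{main_nocvx} with the specific $C_k$ of the statement, and the computation that would have to certify this is precisely the one you leave open.

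A secondary point: expanding $\langle g_k,d_{k-1}\rangle$ into a double sum of pairwise products $\langle g_k,g_i\rangle$ and bounding each by Cauchy--Schwarz is lossier than necessary. The paper never decouples the pairs: it keeps $\|x^{ag}_{k-1}-x_{k-1}\|^2$ intact and applies Jensen's inequality for $\|\cdot\|^2$ to the convex combination with weights $\alpha_\tau/\Gamma_\tau$, which is what produces the $(\Gamma_\tau\alpha_\tau)^{-1}$ weights inside $C_k$. If you want to repair part (a), the fix is to abandon the $\Psi(x^{ag}_k)$ recursion and redo the descent step along $x_{k-1}\to x_k$ as above, so that the only error term is quadratic in $\|x^{ag}_{k-1}-x_{k-1}\|$ and already carries the factor $L_\Psi$.
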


\begin{proof}
We first show part a).
Denote $\Delta_k:=\nabla \Psi(x_{k-1})-\nabla \Psi(x^{md}_k)$.
By \eqnok{f_smooth1} and \eqnok{Ne}, we have
\beq \label{bound_Delta_k}
\|\Delta_k\| =\|\nabla \Psi(x_{k-1})-\nabla \Psi(x^{md}_k)\| \le L_\Psi \|x_{k-1}-x^{md}_k\|=L_\Psi (1-\alpha_k) \|x^{ag}_{k-1}-x_{k-1}\|.
\eeq
Also by \eqnok{f_smooth2} and \eqnok{Ne3}, we have
\beqa
\Psi(x_k) &\le& \Psi(x_{k-1}) + \langle \nabla \Psi(x_{k-1}) , x_k-x_{k-1} \rangle +\frac{L_\Psi}{2} \|x_k-x_{k-1}\|^2 \nn \\
&=& \Psi(x_{k-1})+ \langle \Delta_k+\nabla \Psi(x^{md}_k), -\lambda_k \nabla \Psi(x^{md}_k)\rangle + \frac{L_\Psi \lambda_k^2}{2}\|\nabla \Psi(x^{md}_k)\|^2 \nn \\
&=& \Psi(x_{k-1}) -\lambda_k \left(1-\frac{L_\Psi \lambda_k}{2}\right)\|\nabla \Psi(x^{md}_k)\|^2-\lambda_k \langle \Delta_k,\nabla \Psi(x^{md}_k)\rangle \nn \\
&\le& \Psi(x_{k-1}) -\lambda_k \left(1-\frac{L_\Psi \lambda_k}{2}\right)\|\nabla \Psi(x^{md}_k)\|^2 +\lambda_k \|\Delta_k\|\cdot\|\nabla \Psi(x^{md}_k)\|, \label{nocvx1}
\eeqa
where the last inequality follows from the Cauchy-Schwarz inequality.
Combining the previous two inequalities, we obtain
\beqa
\Psi(x_k) &\le& \Psi(x_{k-1}) -\lambda_k \left(1-\frac{L_\Psi \lambda_k}{2}\right)\|\nabla \Psi(x^{md}_k)\|^2 +L_\Psi (1-\alpha_k) \lambda_k \|\nabla \Psi(x^{md}_k)\| \cdot \|x^{ag}_{k-1}-x_{k-1}\| \nonumber \\
&\le& \Psi(x_{k-1}) -\lambda_k \left(1-\frac{L_\Psi \lambda_k}{2}\right)\|\nabla \Psi(x^{md}_k)\|^2 +\frac{L_\Psi \lambda_k^2}{2}\|\nabla \Psi(x^{md}_k)\|^2+ \frac{L_\Psi (1-\alpha_k)^2}{2} \|x^{ag}_{k-1}-x_{k-1}\|^2 \nonumber \\
&=& \Psi(x_{k-1}) -\lambda_k (1-L_\Psi \lambda_k)\|\nabla \Psi(x^{md}_k)\|^2+ \frac{L_\Psi (1-\alpha_k)^2}{2} \|x^{ag}_{k-1}-x_{k-1}\|^2,\label{nocvx2}
\eeqa
where the second inequality follows from the fact that $ab \le (a^2+b^2)/2$.
Now, by \eqnok{Ne}, \eqnok{Ne3}, and \eqnok{Ne4}, we have
\beqas
x^{ag}_k-x_k &=& (1 - \alpha_k) x^{ag}_{k-1}
+ \alpha_k x_{k-1} -\beta_k \nabla \Psi(x^{md}_k)-[x_{k-1}-\lambda_k \nabla \Psi(x^{md}_k)] \nn \\
&=&(1 - \alpha_k) (x^{ag}_{k-1}-x_{k-1})+(\lambda_k-\beta_k)\nabla \Psi(x^{md}_k),
\eeqas
which, in the view of Lemma~\ref{Gamma_division}, implies that
\[
x^{ag}_k-x_k = \Gamma_k \sum_{\tau=1}^{k} \left(\frac{\lambda_{\tau}-\beta_{\tau}}{\Gamma_{\tau}}\right)\nabla \Psi(x^{md}_{\tau}).
\]
Using the above identity, the Jensen's inequality for $\|\cdot\|^2$,
and the fact that
\beq \label{sum_Gamma}
\sum_{\tau = 1}^k \frac{\alpha_\tau}{\Gamma_\tau}
= \frac{\alpha_1}{\Gamma_1} +
\sum_{\tau = 2}^k \frac{1}{\Gamma_\tau} \left(1 - \frac{\Gamma_\tau}
{\Gamma_{\tau-1}} \right)
= \frac{1}{\Gamma_1} + \sum_{\tau = 2}^k \left(\frac{1}
{\Gamma_\tau} - \frac{1}{\Gamma_{\tau-1}} \right)
= \frac{1}{\Gamma_k},
\eeq
we have
\beqa
\|x^{ag}_k-x_k\|^2 &=& \left\|\Gamma_k \sum_{\tau=1}^{k} \left(\frac{\lambda_{\tau}-\beta_{\tau}}{\Gamma_{\tau}}\right)\nabla \Psi(x^{md}_{\tau})\right\|^2 = \left\|\Gamma_k \sum_{\tau=1}^{k} \frac{\alpha_{\tau}}{\Gamma_{\tau}}\left[\left(\frac{\lambda_{\tau}-\beta_{\tau}}{\alpha_{\tau}}\right)\nabla \Psi(x^{md}_{\tau})\right]  \right\|^2 \nn \\
&\le& \Gamma_k \sum_{\tau=1}^{k} \frac{\alpha_{\tau}}{\Gamma_{\tau}}\left\|\left(\frac{\lambda_{\tau}-\beta_{\tau}}{\alpha_{\tau}}\right)\nabla \Psi(x^{md}_{\tau})\right\|^2 = \Gamma_k \sum_{\tau=1}^{k} \frac{(\lambda_{\tau}-\beta_{\tau})^2}{\Gamma_{\tau} \alpha_{\tau}}\|\nabla \Psi(x^{md}_{\tau})\|^2. \label{bnd_ag_k}
\eeqa
Replacing the above bound in \eqnok{nocvx2}, we obtain
\beqa
\Psi(x_k) &\le& \Psi(x_{k-1})-\lambda_k (1-L_\Psi \lambda_k)
\|\nabla \Psi(x^{md}_k)\|^2+\frac{L_\Psi \Gamma_{k-1}(1-\alpha_k)^2}{2}
\sum_{\tau=1}^{k-1} \frac{(\lambda_{\tau}-\beta_{\tau})^2}{\Gamma_{\tau}
\alpha_{\tau}}\|\nabla \Psi(x^{md}_{\tau})\|^2 \nn\\
&\le& \Psi(x_{k-1})-\lambda_k (1-L_\Psi \lambda_k)
\|\nabla \Psi(x^{md}_k)\|^2+\frac{L_\Psi \Gamma_k}{2}
\sum_{\tau=1}^{k} \frac{(\lambda_{\tau}-\beta_{\tau})^2}
{\Gamma_{\tau} \alpha_{\tau}}\|\nabla \Psi(x^{md}_{\tau})\|^2 \label{nocvx3}
\eeqa
for any $k \ge 1$, where the last inequality follows from
the definition of $\Gamma_k$ in \eqnok{def_Gamma} and the
fact that $\alpha_k \in (0,1]$ for all $k \ge 1$.
Summing up the above inequalities and using the definition
of $C_k$ in \eqnok{def_Ck}, we have
\beqa
\Psi(x_N) &\le& \Psi(x_0)- \sum_{k=1}^{N} \lambda_k (1-L_\Psi \lambda_k)\|\nabla \Psi(x^{md}_k)\|^2+\frac{L_\Psi}{2} \sum_{k=1}^{N} \Gamma_k \sum_{\tau=1}^{k} \frac{(\lambda_{\tau}-\beta_{\tau})^2}{\Gamma_{\tau} \alpha_{\tau}}\|\nabla \Psi(x^{md}_{\tau})\|^2 \nonumber \\
&=& \Psi(x_0)- \sum_{k=1}^{N} \lambda_k (1-L_\Psi \lambda_k)\|\nabla \Psi(x^{md}_k)\|^2+\frac{L_\Psi}{2} \sum_{k=1}^{N} \frac{(\lambda_k-\beta_k)^2}{\Gamma_k \alpha_k} \left(\sum_{\tau=k}^{N} \Gamma_{\tau}\right) \|\nabla \Psi(x^{md}_k)\|^2 \nonumber \\
&=& \Psi(x_0)- \sum_{k=1}^{N} \lambda_k C_k \|\nabla \Psi(x^{md}_k)\|^2.
\eeqa
Re-arranging the terms in the above inequality
and noting that $\Psi(x_N) \ge \Psi^*$, we obtain
\[
\min\limits_{k=1,...,N} \|\nabla \Psi(x^{md}_k)\|^2 \left(\sum_{k=1}^{N} \lambda_k C_k\right) \le \sum_{k=1}^{N} \lambda_k C_k \|\nabla \Psi(x^{md}_k)\|^2 \le \Psi(x_0)-\Psi^*,
\]
which, in view of the assumption that $C_k>0$,
clearly implies \eqnok{main_nocvx}.

We now show part b).
First, note that by \eqnok{Ne4}, we have
\beqa
\Psi(x^{ag}_k) &\le& \Psi(x^{md}_k) + \langle \nabla \Psi(x^{md}_k) , x^{ag}_k-x^{md}_k \rangle +\frac{L_\Psi}{2} \|x^{ag}_k-x^{md}_k\|^2 \nn \\
&=& \Psi(x^{md}_k) -\beta_k \|\nabla \Psi(x^{md}_k)\|^2+\frac{L_\Psi \beta_k^2}{2}\|\nabla \Psi(x^{md}_k)\|^2. \label{main_cvx_theom-p1}
\eeqa
Also by the convexity of $\Psi(\cdot)$ and \eqnok{Ne},
\begin{align}
\Psi(x^{md}_k) - [(1-\alpha_k)\Psi(x^{ag}_{k-1})+\alpha_k \Psi(x)]
&= \alpha_k \left[\Psi(x^{md}_k) - \Psi(x)\right] +(1-\alpha_k) \left[\Psi(x^{md}_k) - \Psi(x^{ag}_{k-1})\right] \nn \\
&\le \alpha_k  \langle \nabla \Psi(x^{md}_k), x^{md}_k - x \rangle+(1-\alpha_k)\langle \nabla \Psi(x^{md}_k), x^{md}_k - x^{ag}_{k-1} \rangle   \nn \\
&= \langle \nabla \Psi(x^{md}_k), \alpha_k (x^{md}_k - x)+(1-\alpha_k)(x^{md}_k - x^{ag}_{k-1}) \rangle \nn \\
&= \alpha_k \langle \nabla \Psi(x^{md}_k), x_{k-1} - x \rangle. \label{main_cvx_theom-p2}
\end{align}
It also follows from \eqnok{Ne3} that
\begin{align}
\|x_{k-1} - x\|^2 - 2 \lambda_k \langle \nabla \Psi(x^{md}_k), x_{k-1} - x \rangle + \lambda_k^2 \|\nabla \Psi(x^{md}_k)\|^2 \nn\\
 = \|x_{k-1} - \lambda_k \nabla \Psi(x^{md}_k) - x\|^2= \|x_k - x\|^2, \nn
\end{align}
and hence that
\beq \label{main_cvx_theom-p3}
\alpha_k \langle \nabla \Psi(x^{md}_k), x_{k-1} - x \rangle = \frac{\alpha_k}{2 \lambda_k}
\left[ \|x_{k-1}-x\|^2 -  \|x_{k}-x\|^2 \right] + \frac{\alpha_k \lambda_k}{2}  \|\nabla \Psi(x^{md}_k)\|^2.
\eeq
Combining \eqnok{main_cvx_theom-p1}, \eqnok{main_cvx_theom-p2}, and \eqnok{main_cvx_theom-p3},
we obtain
\begin{align}
\Psi(x^{ag}_k) &\le (1-\alpha_k)\Psi(x^{ag}_{k-1})+\alpha_k \Psi(x)+\frac{\alpha_k}{2 \lambda_k}
\left[\|x_{k-1} - x\|^2 - \|x_k - x\|^2 \right]  \nn\\
& \ \ \ \ -\beta_k \left(1-\frac{L_\Psi \beta_k}{2}-\frac{\alpha_k  \lambda_k}{2 \beta_k} \right)\|\nabla \Psi(x^{md}_k)\|^2 \nn\\
&\le (1-\alpha_k)\Psi(x^{ag}_{k-1})+\alpha_k \Psi(x)+\frac{\alpha_k}{2 \lambda_k}
\left[\|x_{k-1} - x\|^2 - \|x_k - x\|^2 \right] \nn\\
& \ \ \ \ -\frac{\beta_k}{2} \left(1-L_\Psi \beta_k\right)\|\nabla \Psi(x^{md}_k)\|^2, \label{ass_lambda}
\end{align}
where the last inequality follows from the assumption in \eqnok{stepsize_assum1}.
Subtracting $\Psi(x)$ from both sides of the above inequality
and using Lemma~\ref{Gamma_division}, we conclude that
\beqa
\frac{\Psi(x^{ag}_N)-\Psi(x)}{\Gamma_N} &\le& \sum_{k=1}^N \frac{\alpha_k}{2 \lambda_k \Gamma_k} \left[\|x_{k-1}-x\|^2 -\|x_k-x\|^2 \right]
-\sum_{k=1}^N \frac{\beta_k}{2\Gamma_k} \left(1-L_\Psi \beta_k\right)\|\nabla \Psi(x^{md}_k)\|^2 \nn \\
&\le&\frac{\|x_0 - x\|^2}{2 \lambda_1}-\sum_{k=1}^N \frac{\beta_k}{2\Gamma_k}
\left(1-L_\Psi \beta_k\right)\|\nabla \Psi(x^{md}_k)\|^2 \ \ \forall x \in \bbr^n,\label{cvx_recur}
\eeqa
where the second inequality follows from the simple relation that
\beq \label{sum_dist}
\sum_{k=1}^N \frac{\alpha_k}{\lambda_k \Gamma_k} \left[\|x_{k-1}-x\|^2 -\|x_k-x\|^2 \right]
\le \frac{\alpha_1 \|x_0 - x\|^2}{\lambda_1 \Gamma_1} = \frac{\|x_0 - x\|^2}{\lambda_1}
\eeq
due to \eqnok{stepsize_equal} and the fact that $\alpha_1 = \Gamma_1 = 1$.
Hence, \eqnok{cvx_fun} immediately follows from
the above inequality and the assumption in \eqnok{stepsize_assum1}.
Moreover, fixing $x = x^*$, re-arranging the terms in \eqnok{cvx_recur}, and
noting the fact that $\Psi(x^{ag}_N) \ge \Psi(x^*)$, we obtain
\beqa
\min\limits_{k=1,...,N} \|\nabla \Psi(x^{md}_k)\|^2
\sum_{k=1}^N \frac{\beta_k}{2 \Gamma_k} \left(1-L_\Psi \beta_k \right) &\le&
\sum_{k=1}^N \frac{\beta_k}{2\Gamma_k} \left(1-L_\Psi \beta_k \right)\|\nabla \Psi(x^{md}_k)\|^2 \nn \\
&\le& \frac{\|x^*-x_0\|^2}{2 \lambda_1}, \nn
\eeqa
which together with \eqnok{stepsize_assum1}, clearly imply \eqnok{cvx_grad}.
\end{proof}

\vgap

We add a few observations about Theorem~\ref{main_nocvx_theom}. First,
in view of \eqnok{ass_lambda}, it is possible to
use a different assumption than the one in \eqnok{stepsize_assum1}
on the stepsize policies for the convex case.
In particular, we only need
\beq \label{stepsize_assum1-1}
2- L_\Psi \beta_k-\frac{\alpha_k \lambda_k}{\beta_k} >0
\eeq
to show the convergence of the AG method for minimizing smooth convex problems.
However, since the condition given by \eqnok{stepsize_assum1} is required for
minimizing composite problems in Subsections~\ref{comp_nocvx_sec} and \ref{comp_nocvx-SA_sec},
we state this assumption for the sake of simplicity.
Second, there are various options for selecting $\{\alpha_k\}$, $\{\beta_k\}$,
and $\{\lambda_k\}$ to guarantee the convergence of the AG algorithm. Below we
provide some of these selections for solving both convex and nonconvex problems.

\begin{corollary} \label{nocvx_crly}
Suppose that $\{\alpha_k\}$ and $\{\beta_k\}$ in the AG method are set to
\beq \label{def_alpha_beta}
\alpha_k = \frac{2}{k+1} \ \ \ \mbox{and} \ \ \ \beta_k = \frac{1}{2 L_\Psi}.
\eeq
\begin{itemize}
\item [a)] If $\{\lambda_k\}$ satisifies
\beq \label{def_lambda}
\lambda_k \in \left[\beta_k,
\left(1+\frac{\alpha_k}{4}\right) \beta_k \right] \ \ \forall k \ge 1,
\eeq
then for any $N\ge 1$, we have
\beq
\min\limits_{k=1,...,N} \|\nabla \Psi(x^{md}_k)\|^2
\le \frac{6 L_\Psi[\Psi(x_0) - \Psi^*]}{N}.\label{main_nocvx2}
\eeq
\item [b)]
Assume that $\Psi(\cdot)$ is convex
and that an optimal solution $x^*$ exists for problem \eqnok{NLP}.
If $\{\lambda_k\}$ satisfies
\beq \label{def_lambda1}
\lambda_k = \frac{ k \, \beta_k}{2} \ \ \forall k \ge 1,
\eeq
then for any $N \ge 1$, we have
\beqa
\min\limits_{k=1,...,N} \|\nabla \Psi(x^{md}_k)\|^2 &\le&
\frac{96 L_\Psi^2 \|x_0 - x^*\|^2}{N^2(N+1)}, \label{cvx_grad1}\\
\Psi(x^{ag}_N)-\Psi(x^*) &\le& \frac{4 L_\Psi
\|x_0 - x^*\|^2}{ N (N+1) }. \label{cvx_fun1}
\eeqa
\end{itemize}

\end{corollary}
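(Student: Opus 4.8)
The plan is to derive both parts by specializing Theorem~\ref{main_nocvx_theom} to the prescribed stepsizes, so the first task is to put $\Gamma_k$ in closed form. Since $\alpha_1 = 1$ and $\alpha_k = 2/(k+1)$ gives $1 - \alpha_k = (k-1)/(k+1)$ for $k \ge 2$, the recursion \eqnok{def_Gamma} telescopes to $\Gamma_k = 2/[k(k+1)]$ for every $k \ge 1$. I would then record the two elementary facts used repeatedly below: the partial-fraction telescoping $\sum_{\tau=k}^N \Gamma_\tau = 2\sum_{\tau=k}^N (1/\tau - 1/(\tau+1)) \le 2/k$, and consequently $\Gamma_k^{-1}\sum_{\tau=k}^N \Gamma_\tau \le k+1$.

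For part a), I would verify the hypothesis $C_k > 0$ of Theorem~\ref{main_nocvx_theom}a) and in fact prove the stronger uniform estimate $C_k \ge 1/3$. The window \eqnok{def_lambda} gives $\beta_k \le \lambda_k \le (1+\alpha_k/4)\beta_k \le \tfrac{5}{4}\beta_k$ (using $\alpha_k \le 1$), so with $\beta_k = 1/(2L_\Psi)$ one gets $L_\Psi\lambda_k \le 5/8$. For the last term of \eqnok{def_Ck} I would use $(\lambda_k - \beta_k)^2 \le (\alpha_k\beta_k/4)^2$ together with $\lambda_k \ge \beta_k$ to obtain $(\lambda_k-\beta_k)^2/\lambda_k \le \alpha_k^2\beta_k/16$; combining this with $\Gamma_k^{-1}\sum_{\tau=k}^N\Gamma_\tau \le k+1$ and $\alpha_k\beta_k = 1/[(k+1)L_\Psi]$ collapses the whole term to at most $1/32$. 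Hence $C_k \ge 1 - 5/8 - 1/32 = 11/32 \ge 1/3$, so $\lambda_k C_k \ge \beta_k/3 = 1/(6L_\Psi)$; summing gives $\sum_{k=1}^N \lambda_k C_k \ge N/(6L_\Psi)$, and substituting into \eqnok{main_nocvx} yields \eqnok{main_nocvx2}.

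For part b), the work is to check that the two convexity hypotheses of Theorem~\ref{main_nocvx_theom}b) hold for $\lambda_k = k\beta_k/2$ and then to evaluate the bounds. Condition \eqnok{stepsize_assum1} is immediate since $\beta_k = 1/(2L_\Psi) < 1/L_\Psi$ and $\alpha_k\lambda_k = [k/(k+1)]\beta_k \le \beta_k$. For \eqnok{stepsize_equal} I would simply compute $\alpha_k/(\lambda_k\Gamma_k) = 4L_\Psi$, which is constant in $k$ and hence nonincreasing. It remains to substitute into \eqnok{cvx_fun} and \eqnok{cvx_grad}: with $\Gamma_N = 2/[N(N+1)]$ and $\lambda_1 = 1/(4L_\Psi)$ the functional estimate \eqnok{cvx_fun} reduces directly to \eqnok{cvx_fun1}, while for \eqnok{cvx_grad} I evaluate $\Gamma_k^{-1}\beta_k(1 - L_\Psi\beta_k) = k(k+1)/(8L_\Psi)$ and sum via $\sum_{k=1}^N k(k+1) = N(N+1)(N+2)/3$ to get a denominator proportional to $N(N+1)(N+2)$; using $N(N+1)(N+2) \ge N^2(N+1)$ then gives \eqnok{cvx_grad1}.

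The genuinely delicate step is the uniform lower bound $C_k \ge 1/3$ in part a): everything hinges on controlling the cumulative error term $\frac{L_\Psi(\lambda_k-\beta_k)^2}{2\alpha_k\Gamma_k\lambda_k}\sum_{\tau=k}^N\Gamma_\tau$, and it is precisely the narrow admissible range \eqnok{def_lambda} for $\lambda_k$ (forcing $\lambda_k - \beta_k$ to be $O(\alpha_k\beta_k)$) combined with the telescoping estimate $\Gamma_k^{-1}\sum_{\tau=k}^N\Gamma_\tau \le k+1$ that keeps this term bounded by a small absolute constant uniformly in both $k$ and $N$. Once $\Gamma_k$ is in closed form, the remaining computations in both parts are routine arithmetic.
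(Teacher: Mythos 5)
Your proposal is correct and follows essentially the same route as the paper: verify $\Gamma_k = 2/[k(k+1)]$, establish the uniform bound $C_k \ge 11/32$ (hence $\lambda_k C_k \ge 1/(6L_\Psi)$) for part a), and check conditions \eqnok{stepsize_assum1}--\eqnok{stepsize_equal} before substituting into \eqnok{cvx_grad} and \eqnok{cvx_fun} for part b). The only differences are cosmetic bookkeeping (you compute $\sum_{k=1}^N k(k+1) = N(N+1)(N+2)/3$ exactly where the paper lower-bounds $\Gamma_k^{-1} \ge k^2/2$, and you bound $L_\Psi\lambda_k \le 5/8$ at the outset rather than carrying $\alpha_k/4$ symbolically), and all constants agree.
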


\begin{proof}
We first show part a). Note that by \eqnok{def_Gamma} and \eqnok{def_alpha_beta}, we have
\beq \label{def_Gamma2}
\Gamma_k = \frac{2}{k (k+1)},
\eeq
which implies that
\beq \label{nocvx4}
\sum_{\tau=k}^{N} \Gamma_{\tau} = \sum_{\tau=k}^{N} \frac{2}{\tau (\tau+1)}
=2 \sum_{\tau=k}^{N} \left(\frac{1}{\tau}-\frac{1}{\tau+1}\right) \le \frac{2}{k}.
\eeq
It can also be easily seen from \eqnok{def_lambda} that $0 \le \lambda_k - \beta_k \le \alpha_k \beta_k / 4$.
Using these observations, \eqnok{def_alpha_beta}, and \eqnok{def_lambda}, we
have
\beqa
C_k &=&1-L_\Psi \left[\lambda_k + \frac{(\lambda_k-\beta_k)^2}{2 \alpha_k \Gamma_k \lambda_k}
\left(\sum_{\tau=k}^{N} \Gamma_{\tau}\right)\right] \nn \\
&\ge& 1- L_\Psi \left[ \left(1+\frac{\alpha_k}{4}\right) \beta_k + \frac{\alpha_k^2 \beta_k^2}{16} \frac{1}{k \alpha_k \Gamma_k \beta_k}\right]\nn \\
&=& 1- \beta_k L_\Psi \left( 1+\frac{\alpha_k}{4} + \frac{1}{16} \right) \nn\\
&\ge& 1 - \beta_k L_\Psi \frac{21}{16} = \frac{11}{32},\label{nocvx5} \\
\lambda_k C_k &\ge& \frac{11\beta_k}{32} = \frac{11}{64 L_\Psi} \ge \frac{1}{6 L_\Psi}.\nn
\eeqa
Combining the above relation with \eqnok{main_nocvx}, we obtain \eqnok{main_nocvx2}.

We now show part b). Observe that by \eqnok{def_alpha_beta} and \eqnok{def_lambda1}, we have
\begin{align}
\alpha_k \lambda_k &= \frac{k}{k+1}\beta_k < \beta_k, \nn  \\
\frac{\alpha_1}{\lambda_1 \Gamma_1} &= \frac{\alpha_2}{\lambda_2 \Gamma_2} = \ldots =4 L_\Psi,\nn
\end{align}
which implies that conditions \eqnok{stepsize_assum1} and \eqnok{stepsize_equal} hold.
Moreover, we have
\beq \label{check_bnd1}
\sum_{k=1}^N \Gamma_k^{-1} \beta_k  (1 - L_\Psi \beta_k) = \frac{1}{4 L_\Psi}  \sum_{k=1}^N \Gamma_k^{-1}
\ge \frac{1}{8 L_\Psi} \sum_{k=1}^N k^2 = \frac{1}{48 L_\Psi}N(N+1)(2N+1) \ge \frac{N^2(N+1)}{24 L_\Psi}.
\eeq
Using \eqnok{def_Gamma2} and the above bounds in \eqnok{cvx_grad} and \eqnok{cvx_fun}, we
obtain \eqnok{cvx_grad1} and \eqnok{cvx_fun1}.
\end{proof}

\vgap

We now add a few remarks about the results obtained in Corollary~\ref{nocvx_crly}. First, the rate of convergence in \eqnok{main_nocvx2} for
the AG method is in the same order of magnitude as that for
the gradient descent method (\cite{Nest04}). It is also worth noting that by choosing $\lambda_k=\beta_k$ in \eqnok{def_lambda}, the rate of convergence for the AG method is just changed up to a constant factor. However, in this case, the AG method is reduced to the gradient descent method as mentioned earlier in this subsection.
Second, if the problem is convex, by choosing more aggressive stepsize $\{\lambda_k\}$ in \eqnok{def_lambda1},
the AG method exhibits the optimal rate of convergence
in \eqnok{cvx_fun1}. Moreover, with such a selection of $\{\lambda_k\}$, the AG method can find a
solution $\bar x$ such that $\|\nabla \Psi(\bar x)\|^2 \le \epsilon$
in at most ${\cal O}(1/\epsilon^\frac{1}{3})$ iterations according to \eqnok{cvx_grad1}. The latter result has also been established
in \cite[Proposition 5.2]{MonSva11-1} for an accelerated hybrid proximal extra-gradient method when applied to convex problems.

\vgap

Observe that $\{\lambda_k\}$ in \eqnok{def_lambda}
for general nonconvex problems is in the order of ${\cal O}(1/L_\Psi)$,
while the one in \eqnok{def_lambda1} for convex problems are more aggressive (in ${\cal O}(k /L_\Psi)$).
An interesting question is whether we can apply the same
stepsize policy in \eqnok{def_lambda1}
for solving general NLP problems no matter they are convex or not.
We will discuss such a uniform treatment for both convex and nonconvex optimization
for solve a certain class of composite problems in next subsection.

\subsection{Minimization of nonconvex composite functions}\label{comp_nocvx_sec}
In this subsection, we consider a special class of NLP problems given in the form of \eqnok{comp_nocvx_prblm}.
Our goal in this subsection is to show that we can employ a more aggressive stepsize policy in the AG method, similar to
the one used in the convex case (see Theorem~\ref{main_nocvx_theom}.b) and Corollary~\ref{nocvx_crly}.b)),
to solve these composite problems, even if $\Psi(\cdot)$ is possibly nonconvex.

Throughout this subsection, we make the following assumption
about the convex (possibly non-differentiable) component $\cX(\cdot)$
in \eqnok{comp_nocvx_prblm}.

\begin{assumption} \label{assump_bnd}
There exists a constant $M$ such that $\|\cP(x,y,c)\| \le M$ for any $c \in (0, +\infty)$
and $x, y \in \bbr^n$, where $\cP(x,y,c)$ is given by
\beq \label{assum_comp}
\cP(x,y,c) := \argmin_{u \in \bbr^n} \left\{\langle y, u \rangle+ \frac{1}{2c} \|u-x\|^2 + \cX(u)\right\}.
\eeq
\end{assumption}

Next result shows a certain class of functions $\cX(\cdot)$ which assures that Assumption~\ref{assump_bnd} is satisfied.
\begin{lemma}
If $\cX(\cdot)$ is a proper closed convex function with bounded domain,
then Assumption~\ref{assump_bnd} is satisfied.
\end{lemma}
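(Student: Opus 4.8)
The plan is to exploit the single key structural fact that the minimizer defining $\cP(x,y,c)$ is forced to lie in $\dom \cX$, after which the boundedness of that domain yields the uniform bound directly. So the argument splits into a well-posedness step and an immediate containment step, with essentially no computation.

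First I would confirm that $\cP(x,y,c)$ is well-defined for every $x,y \in \bbr^n$ and $c>0$, i.e., that the minimization in \eqnok{assum_comp} actually attains a (unique) minimizer. The objective $u \mapsto \langle y, u \rangle + \frac{1}{2c}\|u-x\|^2 + \cX(u)$ is proper (since $\cX$ is proper), closed/lower semicontinuous (as the sum of a continuous quadratic-plus-linear term and the closed function $\cX$), and strongly convex because of the quadratic penalty $\frac{1}{2c}\|u-x\|^2$ with $c>0$. A proper closed strongly convex function attains its infimum at a unique point, so $\cP(x,y,c)$ exists and is unique; denote it $u^*$.

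Next I would observe that $u^*$ must satisfy $\cX(u^*)<+\infty$, that is $u^* \in \dom \cX$. Indeed, if $u^*$ were outside $\dom \cX$ the objective value there would be $+\infty$, which cannot be the minimum because $\dom \cX$ is nonempty (properness of $\cX$) and the objective is finite at any point of $\dom \cX$. Finally, since $\dom \cX$ is bounded by hypothesis, the constant $M := \sup_{u \in \dom \cX}\|u\| < +\infty$ is well-defined and independent of $x$, $y$, and $c$; hence $\|\cP(x,y,c)\| = \|u^*\| \le M$ for all admissible $x,y,c$, which is exactly Assumption~\ref{assump_bnd}.

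The only point that requires any care is the well-posedness step, namely guaranteeing existence and uniqueness of the minimizer through strong convexity and closedness; the uniform bound itself is then immediate from the containment $u^* \in \dom \cX$ and the boundedness of the domain. I therefore do not anticipate any serious obstacle beyond stating these standard convex-analytic facts cleanly.
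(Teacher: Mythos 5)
Your proposal is correct and follows essentially the same route as the paper's proof: the minimizer must lie in $\dom\,\cX$ because the objective is $+\infty$ outside it, and boundedness of the domain then gives the uniform bound. The only difference is that you spell out the well-posedness of the proximal subproblem via strong convexity, which the paper leaves implicit.
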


\begin{proof}
Denote $X \equiv \mbox{dom} (\cX) := \{u | \cX(u)<+\infty\}$.
Note that by assumption, $X$ is nonempty and bounded.
Also observe that \eqnok{assum_comp} is equivalent to
\[
\cP(x,y,c) = \argmin_{u \in \bbr^n} \left\{w(u) := \frac{1}{2c} \| u - x + c y \|^2 + \cX(u) \right\}.
\]
For any $u \notin X$, we have $\cX(u) = + \infty$, which
together with the fact that $c > 0$ then imply that $w(u) = + \infty$
for any $u \notin X$.
Hence, $\cP(x,y,c) \in X$ and the result follows immediately.
\end{proof}

\vgap

Based on the above result, we can give the following examples. Let $X \subseteq \bbr^n$ be a
given convex compact set. It can be easily seen that Assumption~\ref{assump_bnd}
holds if $\cX(x) = {\cal I}_X(x)$. Here ${\cal I}_X$ is the indicator function of $X$ given by
\[
{\cal I}_X(x) =\begin{cases}
0 & x \in X,\\
+\infty & x \notin X.
\end{cases}
\]
Another important example is given by
$\cX(x) = {\cal I}_X(x) + \|x\|_1$, where $\|\cdot\|_1$ denotes the $l_1$ norm.

Observe that $\cP(x,y,c)$ in \eqnok{assum_comp} also gives rise to an important
quantity that will be used frequently in our convergence analysis, i.e.,
\beq \label{def_proj_grad}
\cG(x,y,c):=\frac{1}{c} [x-\cP(x,y,c)].
\eeq
In particular, if $y=\nabla \Psi(x)$, then $\cG(x,y,c)$ is called the gradient
mapping at $x$, which has been used as a termination criterion for solving constrained or composite
NLP problems (see, e.g., \cite{nemyud:83,Nest04,LanMon13-1,GhaLanZhang13-1,LanMon09-1}).
It can be easily seen that $\cG(x,\nabla \Psi(x),c)=\nabla \Psi(x)$ for any $c >0$ when $\cX(\cdot)=0$.
For more general $\cX(\cdot)$, the following result shows that as the size of $\cG(x,\nabla \Psi(x),c)$ vanishes,
$\cP(x,\nabla \Psi(x),c)$ approaches to a stationary point of problem~\eqnok{comp_nocvx_prblm}.

\begin{lemma}\label{comp_nocvx_lemma3}
Let $x \in \bbr^n$ be given and denote $g \equiv \nabla \Psi(x)$.
If $\|\cG(x,g,c)\| \le \epsilon$ for some $c > 0$, then
\[
-\nabla \Psi(\cP(x,g,c)) \in \partial \cX(\cP(x,g,c))+ {\cal B}(\epsilon (c L_\Psi+1)),\]
where
$\partial \cX(\cdot)$ denotes the subdifferential of $\cX(\cdot)$
and ${\cal B}(r) := \{x \in \bbr^n: \|x\| \le r\}$.
\end{lemma}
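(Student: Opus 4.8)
The plan is to read off the first-order optimality condition for the proximal subproblem that defines $\cP(x,g,c)$ and then rewrite it as a statement about $\nabla \Psi$ evaluated at the prox point. Set $x^+ := \cP(x,g,c)$. Since $x^+$ is the (unique) minimizer of the strongly convex function $u \mapsto \langle g, u\rangle + \tfrac{1}{2c}\|u-x\|^2 + \cX(u)$, whose only nonsmooth summand is $\cX$, the optimality condition reads
\[
0 \in g + \tfrac{1}{c}\left(x^+ - x\right) + \partial \cX(x^+).
\]
By the definition of $\cG$ in \eqnok{def_proj_grad} we have $\tfrac{1}{c}(x^+ - x) = -\cG(x,g,c)$, so this is equivalent to the inclusion $\cG(x,g,c) - g \in \partial \cX(x^+)$.

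Next I would introduce $\nabla \Psi(x^+)$ by adding and subtracting it, using $g = \nabla \Psi(x)$. Writing the target vector as
\[
-\nabla \Psi(x^+) = \left[\cG(x,g,c) - g\right] + \left[\nabla \Psi(x) - \nabla \Psi(x^+)\right] - \cG(x,g,c),
\]
the first bracket lies in $\partial \cX(x^+)$, so it remains only to bound the norm of the residual $r := \left[\nabla \Psi(x) - \nabla \Psi(x^+)\right] - \cG(x,g,c)$.

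To control $\|r\|$ I would combine the Lipschitz property \eqnok{f_smooth1} with the identity $\|x - x^+\| = c\,\|\cG(x,g,c)\|$, which is immediate from \eqnok{def_proj_grad}. This gives $\|\nabla \Psi(x) - \nabla \Psi(x^+)\| \le L_\Psi \|x - x^+\| = c L_\Psi \|\cG(x,g,c)\|$, whence the triangle inequality yields $\|r\| \le (c L_\Psi + 1)\|\cG(x,g,c)\|$. Invoking the hypothesis $\|\cG(x,g,c)\| \le \epsilon$ then gives $\|r\| \le \epsilon(c L_\Psi + 1)$, i.e.\ $r \in \cB(\epsilon(c L_\Psi+1))$, and the desired inclusion follows.

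I do not anticipate any real difficulty: the statement is essentially a one-line consequence of the optimality condition followed by a Lipschitz estimate. The only steps requiring mild care are the sign bookkeeping when converting the subgradient condition into one involving $\nabla \Psi(x^+)$ rather than $\nabla \Psi(x)$, and noting that the subdifferential calculus applies cleanly because the smooth part of the subproblem is differentiable while $\cX$ is the sole nonsmooth term.
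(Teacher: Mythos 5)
Your proposal is correct and follows essentially the same route as the paper's proof: write the optimality condition of the prox subproblem, add and subtract $\nabla\Psi(\cP(x,g,c))$, and bound the residual by $L_\Psi\|x-\cP(x,g,c)\| + \tfrac{1}{c}\|x-\cP(x,g,c)\| = (cL_\Psi+1)\|\cG(x,g,c)\|$. The only cosmetic difference is that your residual is the negative of the one displayed in the paper, which is immaterial since the ball ${\cal B}(r)$ is symmetric.
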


\begin{proof}
By the optimality condition of \eqnok{assum_comp}, we have
$
- \nabla \Psi(x)- \frac{1}{c}(\cP(x,g,c) -x) \in \partial \cX(\cP(x,g,c)),
$
which implies that
\beq
-\nabla \Psi(\cP(x,g,c)) + \left[\nabla \Psi(\cP (x,g,c)) - \nabla \Psi(x)- \frac{1}{c}(\cP(x,g,c) -x) \right]
\in \partial \cX(\cP(x,g,c)).
\eeq
Our conclusion immediately follows from the above relation and the simple fact that
\beqa
\|\nabla \Psi(\cP(x,g,c)) - \nabla \Psi(x)- \frac{1}{c}(\cP(x,g,c) -x\|
&\le& L_\Psi \|\cP(x,g,c) -x\| + \frac{1}{c}\|\cP(x,g,c) -x\| \nn \\
&=& (c L_\Psi + 1) \|\cG(x,g,c)\|.\nn
\eeqa
\end{proof}

The following result shows that $\cG(x,\cdot,c)$ is Lipschitz continuous (see, e.g., Proposition 1 of \cite{GhaLanZhang13-1}).
\begin{lemma}\label{proj_grad_lips}
For any $y_1, y_2 \in \bbr^n$, we have
$
\|\cG(x,y_1,c)-\cG(x,y_2,c)\| \le \|y_1-y_2\|.
$
\end{lemma}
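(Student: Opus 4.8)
The plan is to exploit the first-order optimality conditions of the two proximal subproblems defining $\cP(x,y_1,c)$ and $\cP(x,y_2,c)$, and then invoke the monotonicity of the subdifferential $\partial \cX$. First I would abbreviate $u_1 := \cP(x,y_1,c)$ and $u_2 := \cP(x,y_2,c)$, so that by the definition \eqnok{def_proj_grad} the quantity to be bounded is
\[
\cG(x,y_1,c)-\cG(x,y_2,c) = \tfrac{1}{c}\bigl[u_2 - u_1\bigr],
\]
and it therefore suffices to show $\tfrac{1}{c}\|u_1 - u_2\| \le \|y_1 - y_2\|$.

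Next I would write down the optimality conditions. Just as in the proof of Lemma~\ref{comp_nocvx_lemma3}, minimizing the strongly convex objective in \eqnok{assum_comp} gives
\[
-y_1 - \tfrac{1}{c}(u_1 - x) \in \partial \cX(u_1),
\qquad
-y_2 - \tfrac{1}{c}(u_2 - x) \in \partial \cX(u_2).
\]
The central tool is then the monotonicity of $\partial \cX$ (which holds because $\cX$ is convex): taking the two subgradients above as elements of $\partial \cX(u_1)$ and $\partial \cX(u_2)$ respectively, their difference paired with $u_1 - u_2$ is nonnegative. Expanding this inner product yields
\[
\bigl\langle (y_2 - y_1) - \tfrac{1}{c}(u_1 - u_2),\, u_1 - u_2 \bigr\rangle \ge 0,
\]
which rearranges to $\tfrac{1}{c}\|u_1 - u_2\|^2 \le \langle y_2 - y_1, u_1 - u_2 \rangle$.

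Finally I would apply the Cauchy–Schwarz inequality to the right-hand side, obtaining $\tfrac{1}{c}\|u_1 - u_2\|^2 \le \|y_1 - y_2\|\cdot\|u_1 - u_2\|$, and divide through by $\|u_1 - u_2\|$ (the conclusion being trivial when $u_1 = u_2$) to reach the claimed bound $\tfrac{1}{c}\|u_1 - u_2\| \le \|y_1 - y_2\|$. I do not expect any serious obstacle here; this is essentially the firm nonexpansiveness of the prox mapping. The only point requiring a little care is the justification that monotonicity applies to the \emph{specific} subgradients produced by the optimality conditions rather than to arbitrary ones, but since those particular vectors genuinely lie in $\partial \cX(u_1)$ and $\partial \cX(u_2)$, the monotonicity inequality is immediately available and the argument goes through cleanly.
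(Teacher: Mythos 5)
Your argument is correct: the optimality conditions you write down for $u_1=\cP(x,y_1,c)$ and $u_2=\cP(x,y_2,c)$ are exactly what \eqnok{assum_comp} yields, the monotonicity of $\partial\cX$ applied to those specific subgradients gives $\tfrac{1}{c}\|u_1-u_2\|^2\le\langle y_2-y_1,u_1-u_2\rangle$, and Cauchy--Schwarz finishes the job. The paper itself supplies no proof of Lemma~\ref{proj_grad_lips}, deferring to Proposition~1 of \cite{GhaLanZhang13-1}; your derivation is the standard firm-nonexpansiveness argument that the cited result rests on, so there is nothing to add or repair.
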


We are now ready to describe the AG algorithm for solving problem \eqnok{comp_nocvx_prblm},
which differs from Algorithm~\ref{algAC-GD} only in Step 2.

\begin{algorithm} [H]
	\caption{The AG method for composite optimization}
	\label{algAC-GD2}
	\begin{algorithmic}

\STATE Replace \eqnok{Ne3} and \eqnok{Ne4} in Step 2 of the Algorithm 1,
respectively, by
\beqa
x_k &=& \cP(x_{k-1}, \nabla \Psi(x^{md}_k), \lambda_k), \label{Ne1}\\
x^{ag}_k &=& \cP(x^{md}_k, \nabla \Psi(x^{md}_k), \beta_k). \label{Ne2}
\eeqa
\end{algorithmic}
\end{algorithm}

\vgap

A few remarks about Algorithm~\ref{algAC-GD2} are in place. First,
observe that the subproblems~\eqnok{Ne1} and \eqnok{Ne2} are given in the form of \eqnok{assum_comp}
and hence that under Assumption~\ref{assump_bnd}, the search points $x_k$ and $x_k^{ag} \ \ \forall k \ge 1$,
will stay in a bounded set. Second, we need to assume that $\cX(\cdot)$ is simple enough so that
the subproblems~\eqnok{Ne1} and \eqnok{Ne2} are easily computable.
Third, in view of \eqnok{def_proj_grad} and \eqnok{Ne2}, we have
\beq \label{def_proj_grad1}
\cG(x^{md}_k,\nabla \Psi(x^{md}_k), \beta_k) = \frac{1}{\beta_k} (x^{md}_k - x^{ag}_k).
\eeq
We will use $\|\cG(x^{md}_k,\nabla \Psi(x^{md}_k), \beta_k)\|$ as a termination criterion in
the above AG method for composite optimization.

\vgap

Before establishing the convergence of the above AG method,
we first state a technical result which shows that the relation in \eqnok{f_smooth2} can be
enhanced for composite functions.

\begin{lemma} \label{comp_psi}
Let $\Psi(\cdot)$ be defined in \eqnok{comp_nocvx_prblm}. For any $x, y \in \bbr^n$,
we have
\beq \label{comp_psi_prop}
- \frac{L_f}{2} \|y - x\|^2 \le \Psi(y) - \Psi(x) - \langle \nabla \Psi(x), y - x \rangle \le \frac{L_\Psi}{2} \|y - x\|^2.
\eeq
\end{lemma}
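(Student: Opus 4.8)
The plan is to exploit the additive structure $\Psi = f + h$ from \eqnok{comp_nocvx_prblm} and to bound the two pieces of the quantity $\Psi(y) - \Psi(x) - \langle \nabla \Psi(x), y - x \rangle$ separately. Since $\nabla \Psi = \nabla f + \nabla h$, I would first write
\[
\Psi(y) - \Psi(x) - \langle \nabla \Psi(x), y - x \rangle = D_f(x,y) + D_h(x,y),
\]
where $D_f(x,y) := f(y) - f(x) - \langle \nabla f(x), y - x \rangle$ and $D_h(x,y) := h(y) - h(x) - \langle \nabla h(x), y - x \rangle$ are the residuals of the two components relative to their linearizations. The whole content of the lemma is that $h$, being convex, contributes nonnegatively on the lower side and hence only its Lipschitz constant shows up on the upper side.

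For the upper (right-hand) inequality there is essentially nothing new to do: as noted right after \eqnok{comp_nocvx_prblm}, $\Psi \in {\cal C}_{L_\Psi}^{1,1}(\bbr^n)$ with $L_\Psi = L_f + L_h$, so the estimate \eqnok{f_smooth2}, applied verbatim to $\Psi$, already yields $\Psi(y) - \Psi(x) - \langle \nabla \Psi(x), y - x \rangle \le \frac{L_\Psi}{2}\|y-x\|^2$.

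For the lower (left-hand) inequality I would bound $D_h$ and $D_f$ individually. Since $h$ is convex and differentiable, the first-order (gradient) characterization of convexity gives $D_h(x,y) \ge 0$. Since $f \in {\cal C}_{L_f}^{1,1}(\bbr^n)$, the left half of the two-sided estimate \eqnok{f_smooth2} — which holds for any function with $L_f$-Lipschitz gradient and in particular for $f$ — gives $D_f(x,y) \ge -\frac{L_f}{2}\|y-x\|^2$. Adding these two bounds produces the claimed lower estimate $-\frac{L_f}{2}\|y-x\|^2 \le \Psi(y) - \Psi(x) - \langle \nabla \Psi(x), y-x\rangle$.

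This argument is short and the steps are routine; the only thing to get right is the observation that drives the improvement, namely that the nonconvexity of $\Psi$ is confined to $f$, so the downward deviation of $\Psi$ from its linearization is controlled by $L_f$ rather than by the full $L_\Psi$. I expect no real obstacle beyond verifying that \eqnok{f_smooth2} is indeed available for $f$ itself, which it is, since it follows from \eqnok{f_smooth1} for any $C^{1,1}$ function; and this sharper lower bound is precisely what later justifies using the aggressive convex-type stepsize even in the nonconvex composite setting.
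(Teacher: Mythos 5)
Your proof is correct and follows essentially the same route as the paper: both split $\Psi = f + h$, observe that the convexity of $h$ makes its linearization residual nonnegative, and use the $L_f$-Lipschitz continuity of $\nabla f$ to get the $-\tfrac{L_f}{2}\|y-x\|^2$ lower bound, with the upper bound coming directly from \eqnok{f_smooth2} applied to $\Psi$. The only difference is presentational: the paper derives the two component bounds inline via the integral representation $\Psi(y)-\Psi(x)=\int_0^1 \langle \nabla\Psi(x+t(y-x)), y-x\rangle\, dt$ and gradient monotonicity of $h$, whereas you invoke the equivalent standard first-order inequalities directly.
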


\begin{proof}
We only need to show the first relation since the secone one follows from \eqnok{f_smooth2}.
Indeed,
\beqas
\Psi(y) - \Psi(x) &=& \int_0^1 \langle \nabla \Psi(x + t(y-x)), y-x \rangle dt\\
&=& \int_0^1 \langle \nabla f(x+t (y-x)), y-x \rangle dt + \int_0^1 \langle \nabla h(x+t (y-x)), y-x \rangle dt\\
&=& \langle \nabla f(x), y - x \rangle + \int_0^1 \langle \nabla f(x + t(y-x)) - \nabla f(x), y - x \rangle dt \\
& & + \langle \nabla h(x), y - x \rangle + \int_0^1 \langle \nabla h(x+t (y-x)), y-x \rangle dt \\
&\ge& \langle \nabla f(x), y - x \rangle +  \int_0^1 \langle \nabla f(x + t(y-x)) - \nabla f(x), y - x \rangle dt
+ \langle \nabla h(x), y - x \rangle \\
&\ge& \langle \nabla \Psi(x), y - x \rangle - \frac{L_f}{2} \|y - x\|^2  \ \ \forall x, y \in \bbr^n,
\eeqas
where the first inequality follows from the fact that $\langle \nabla h(x+t (y-x), y-x\rangle \ge 0 $
due to the convexity of $h$, and the last inequality follows from the fact that
\[
\langle \nabla f(x + t(y-x)) - \nabla f(x), y - x \rangle \ge - \|f(x + t(y-x)) - \nabla f(x)\| \|y - x \|
\ge - L_f t \|y - x\|^2.
\]
\end{proof}

We are now ready to describe the main convergence properties of Algorithm 2
for solving problem \eqnok{comp_nocvx_prblm}.

\begin{theorem} \label{main_comp_nocvx_theom}
Suppose that Assumption~\ref{assump_bnd} holds and that $\{\alpha_k\}$,
$\{\beta_k\}$, and $\{\lambda_k\}$ in Algorithm~\ref{algAC-GD2} are chosen such
that \eqnok{stepsize_assum1} and \eqnok{stepsize_equal} hold.
Also assume that an optimal solution $x^*$ exists for problem~\eqnok{comp_nocvx_prblm}.
Then for any $N \ge 1$, we have
\beq \label{main_comp_nocvx_theom1}
\min\limits_{k=1,...,N} \|\cG(x^{md}_k,\nabla \Psi(x^{md}_k), \beta_k)\|^2 \le
2 \left[\sum_{k=1}^{N} \Gamma_k^{-1}\beta_k (1-L_\Psi\beta_k)\right]^{-1} \left[\frac{\|x_0-x^*\|^2}{2\lambda_1}
+ \frac{L_f}{\Gamma_N}(\|x^*\|^2 + 2M^2)\right],
\eeq
where $\cG(\cdot,\cdot, \cdot)$ is defined in \eqnok{def_proj_grad}.
If, in addition, $L_f = 0$, then we have
\beq
\Phi(x^{ag}_N)-\Phi(x^*) \le \frac{\Gamma_N \|x_0 - x^*\|^2}
{2\lambda_1}, \label{cvx_fun_ncvx}
\eeq
where $\Phi(x) \equiv \Psi(x) + \cX(x)$.
\end{theorem}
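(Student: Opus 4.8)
The plan is to mirror the convex analysis of Theorem~\ref{main_nocvx_theom}.b), replacing the two gradient updates by the prox steps \eqnok{Ne1}--\eqnok{Ne2} and the convexity inequality \eqnok{main_cvx_theom-p2} by the one-sided estimate of Lemma~\ref{comp_psi}. Write $g_k:=\nabla\Psi(x^{md}_k)$ and, as in \eqnok{def_proj_grad1}, $\bar{g}_k:=\cG(x^{md}_k,g_k,\beta_k)=\tfrac1{\beta_k}(x^{md}_k-x^{ag}_k)$, so that $\|\bar{g}_k\|^2$ is exactly the quantity to be controlled. The heart of the argument is the one-step recursion, valid for every $x\in\bbr^n$,
\[
\Phi(x^{ag}_k)\le(1-\alpha_k)\Phi(x^{ag}_{k-1})+\alpha_k\Phi(x)+\frac{\alpha_k}{2\lambda_k}\left[\|x_{k-1}-x\|^2-\|x_k-x\|^2\right]-\frac{\beta_k}{2}(1-L_\Psi\beta_k)\|\bar{g}_k\|^2+E_k,
\]
where $E_k:=\tfrac{L_f}{2}[(1-\alpha_k)\|x^{ag}_{k-1}-x^{md}_k\|^2+\alpha_k\|x-x^{md}_k\|^2]$ is the nonconvexity error. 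Dividing by $\Gamma_k$, summing, and applying Lemma~\ref{Gamma_division} together with the telescoping bound \eqnok{sum_dist} gives, on setting $x=x^*$ and using $\Phi(x^{ag}_N)\ge\Phi(x^*)$, the estimate $\sum_k\tfrac{\beta_k}{2\Gamma_k}(1-L_\Psi\beta_k)\|\bar{g}_k\|^2\le\tfrac{\|x_0-x^*\|^2}{2\lambda_1}+\sum_k\Gamma_k^{-1}E_k$; bounding the left-hand sum below by $\min_k\|\bar{g}_k\|^2\cdot\sum_k\tfrac{\beta_k}{2\Gamma_k}(1-L_\Psi\beta_k)$ yields \eqnok{main_comp_nocvx_theom1}, while keeping the functional gap and discarding the $\|\bar{g}_k\|^2$ sum yields \eqnok{cvx_fun_ncvx} when $L_f=0$.

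To obtain the recursion I would start from the smoothness half of \eqnok{comp_psi_prop} applied to \eqnok{Ne2}, namely $\Psi(x^{ag}_k)\le\Psi(x^{md}_k)+\langle g_k,x^{ag}_k-x^{md}_k\rangle+\tfrac{L_\Psi}{2}\|x^{ag}_k-x^{md}_k\|^2$, and then use that $x^{ag}_k$ minimizes the $\tfrac1{\beta_k}$-strongly convex map $u\mapsto\langle g_k,u\rangle+\tfrac1{2\beta_k}\|u-x^{md}_k\|^2+\cX(u)$. Evaluating the resulting three-point inequality at the test point $w:=(1-\alpha_k)x^{ag}_{k-1}+\alpha_k x_k$ converts $\langle g_k,x^{ag}_k-x^{md}_k\rangle+\cX(x^{ag}_k)$ into $\langle g_k,w-x^{md}_k\rangle+\cX(w)$ plus the distances $\tfrac1{2\beta_k}[\|w-x^{md}_k\|^2-\|w-x^{ag}_k\|^2]$, and the $\tfrac{L_\Psi}{2}\|x^{ag}_k-x^{md}_k\|^2$ term merges with the $-\tfrac1{2\beta_k}\|x^{ag}_k-x^{md}_k\|^2$ produced by the prox into precisely $-\tfrac{\beta_k}{2}(1-L_\Psi\beta_k)\|\bar{g}_k\|^2$. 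Next I would split $\cX(w)\le(1-\alpha_k)\cX(x^{ag}_{k-1})+\alpha_k\cX(x_k)$ and feed in the three-point inequality for the companion step \eqnok{Ne1} to eliminate $\cX(x_k)$ and to generate the telescoping pair $\|x_{k-1}-x\|^2-\|x_k-x\|^2$; finally the lower estimate in \eqnok{comp_psi_prop}, used at $x^{ag}_{k-1}$ and at $x$ exactly as \eqnok{main_cvx_theom-p2} uses convexity, turns $\Psi(x^{md}_k)$ and the surviving linear terms into $(1-\alpha_k)\Psi(x^{ag}_{k-1})+\alpha_k\Psi(x)$ at the cost of $E_k$ (which vanishes when $L_f=0$).

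The point I expect to be the main obstacle is that the descent iterate $x^{ag}_k$ and the accumulation iterate $x_k$ come from two genuinely different prox evaluations---different centres $x^{md}_k$ versus $x_{k-1}$ and different stepsizes $\beta_k$ versus $\lambda_k$---so, in contrast to standard accelerated schemes, $x^{ag}_k$ is \emph{not} equal to $w=(1-\alpha_k)x^{ag}_{k-1}+\alpha_k x_k$ and the two prox inequalities do not combine for free. After merging them one is left with a residual $\tfrac{\alpha_k}{2}(\tfrac{\alpha_k}{\beta_k}-\tfrac1{\lambda_k})\|x_k-x_{k-1}\|^2$, arising from $\tfrac1{2\beta_k}\|w-x^{md}_k\|^2=\tfrac{\alpha_k^2}{2\beta_k}\|x_k-x_{k-1}\|^2$ set against the $-\tfrac{\alpha_k}{2\lambda_k}\|x_k-x_{k-1}\|^2$ of the \eqnok{Ne1}-inequality, plus the harmless $-\tfrac1{2\beta_k}\|w-x^{ag}_k\|^2\le0$. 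The choice of $w$ is engineered so that both leftovers may simply be dropped: the stepsize hypothesis $\alpha_k\lambda_k\le\beta_k$ in \eqnok{stepsize_assum1} makes $\tfrac{\alpha_k}{\beta_k}-\tfrac1{\lambda_k}\le0$, and it is precisely this inequality (rather than an equality $\beta_k=\alpha_k\lambda_k$) that lets the dedicated descent stepsize $\beta_k$ coexist with the aggressive accumulation stepsize $\lambda_k$.

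It remains to bound $\sum_{k=1}^N\Gamma_k^{-1}E_k$, which is where Assumption~\ref{assump_bnd} enters. Since $x_k$ and $x^{ag}_k$ are outputs of $\cP$ and $x^{md}_k$ is a convex combination of such points (see \eqnok{Ne}), all three lie in the ball of radius $M$, so $\|x^{ag}_{k-1}-x^{md}_k\|^2=\alpha_k^2\|x^{ag}_{k-1}-x_{k-1}\|^2\le4M^2\alpha_k^2$ and $\|x^*-x^{md}_k\|^2\le2\|x^*\|^2+2M^2$. Summing these against $\Gamma_k^{-1}$ and using the identity $\sum_{\tau\le k}\alpha_\tau/\Gamma_\tau=1/\Gamma_k$ from \eqnok{sum_Gamma} (together with $(1-\alpha_k)\alpha_k^2/\Gamma_k\le\alpha_k/\Gamma_k$) collapses every term to order $1/\Gamma_N$ and produces a bound of the form $\tfrac{L_f}{\Gamma_N}(\|x^*\|^2+2M^2)$ as in \eqnok{main_comp_nocvx_theom1}; setting $L_f=0$ makes $E_k\equiv0$ and leaves the clean functional-gap estimate \eqnok{cvx_fun_ncvx}.
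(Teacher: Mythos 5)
Your proposal follows essentially the same route as the paper's proof: the same smoothness upper bound at $x^{ag}_k$, the same one-sided estimate from Lemma~\ref{comp_psi} producing the $L_f$ error terms, the same two three-point prox inequalities combined at the test point $(1-\alpha_k)x^{ag}_{k-1}+\alpha_k x_k$ with the residual term killed by $\alpha_k\lambda_k\le\beta_k$, and the same use of Lemma~\ref{Gamma_division}, \eqnok{sum_dist}, \eqnok{sum_Gamma} and Assumption~\ref{assump_bnd}. The only quibble is that your final bookkeeping ($\|x^{ag}_{k-1}-x^{md}_k\|^2\le 4M^2\alpha_k^2$ combined with $\alpha_k^2(1-\alpha_k)\le\alpha_k$) yields $\|x^*\|^2+3M^2$ rather than the stated $\|x^*\|^2+2M^2$; using $\alpha_k(1-\alpha_k)\le 1/4$, or the paper's grouping in \eqnok{bnd_itr}, recovers the claimed constant.
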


\begin{proof}
By the assumption that $\Psi \in {\cal C}_{L_\Psi}^{1,1}(\bbr^n)$, we have
\beq \label{main_comp_nocvx_thoem-p0}
\Psi(x^{ag}_k) \le \Psi(x^{md}_k) + \langle \nabla \Psi(x^{md}_k) , x^{ag}_k-x^{md}_k \rangle +\frac{L_\Psi}{2} \|x^{ag}_k-x^{md}_k\|^2.
\eeq
Also by Lemma~\ref{comp_psi}, we have
\begin{align}
\Psi(x_k^{md}) &- [(1- \alpha_k) \Psi(x^{ag}_{k-1}) + \alpha_k \Psi(x) ]
= \alpha_k [\Psi(x_k^{md}) - \Psi(x)] + (1 - \alpha_k) [\Psi(x_k^{md}) - \Psi(x_{k-1}^{ag}] \nn\\
&\le \alpha_k \left[ \langle \nabla \Psi(x_k^{md}), x_k^{md} - x \rangle + \frac{L_f}{2} \|x_k^{md} - x\|^2
 \right]  \nn \\
 & \ \ \ \ + (1-\alpha_k) \left[ \langle \nabla \Psi(x_k^{md}), x_k^{md} - x_{k-1}^{ag} \rangle + \frac{L_f}{2} \|x_k^{md} - x_{k-1}^{ag}\|^2 \right] \nn\\
&= \langle \nabla \Psi(x_k^{md}), x_k^{md} - \alpha_k x - (1-\alpha_k) x_{k-1}^{ag} \rangle
 + \frac{L_f \alpha_k}{2} \|x_k^{md} - x\|^2 + \frac{L_f (1-\alpha_k)}{2} \|x_k^{md} - x_{k-1}^{ag}\|^2 \nn\\
&\le  \langle \nabla \Psi(x_k^{md}), x_k^{md} - \alpha_k x - (1-\alpha_k) x_{k-1}^{ag} \rangle
 + \frac{L_f \alpha_k}{2} \|x_k^{md} - x\|^2 + \frac{L_f \alpha_k^2 (1-\alpha_k)}{2} \|x^{ag}_{k-1}-x_{k-1}\|^2, \label{main_comp_nocvx_thoem-p1}
\end{align}
where the last inequality follows from the fact that $x_k^{md} - x_{k-1}^{ag} = \alpha_k (x^{ag}_{k-1}-x_{k-1})$
due to \eqnok{Ne}.
Now, by Lemma 2 of \cite{GhaLan12-2a} for the solutions of subproblems \eqnok{Ne1} and \eqnok{Ne2}, we have
\begin{align}
\langle \nabla \Psi(x^{md}_k), x_k - x\rangle + \cX(x_k)   \le \cX(x) +
\frac{1}{2\lambda_k} \left[\|x_{k-1} - x\|^2-\|x_k -x\|^2 - \|x_k-x_{k-1}\|^2 \right], \label{comp_nocvx_lemma1-1} \\
\langle \nabla \Psi(x^{md}_k), x^{ag}_k - x\rangle + \cX(x^{ag}_k)  \le \cX(x) +
\frac{1}{2\beta_k} \left[\|x^{md}_k - x\|^2-\|x^{ag}_k - x\|^2 -\|x^{ag}_k-x^{md}_k\|^2 \right] \label{comp_nocvx_lemma1-2}
\end{align}
for any $x  \in \bbr^n$. Letting $x = \alpha_k x_k + (1-\alpha_k)x^{ag}_{k-1}$ in \eqnok{comp_nocvx_lemma1-2},
we have
\begin{align}
\langle \nabla &\Psi(x^{md}_k), x^{ag}_k - \alpha_k x_k - (1-\alpha_k)x^{ag}_{k-1}\rangle + \cX(x^{ag}_k)  \nn \\
&\le \cX(\alpha_k x_k + (1-\alpha_k)x^{ag}_{k-1}) + \frac{1}{2\beta_k} \left[\|x^{md}_k - \alpha_k x_k - (1-\alpha_k)x^{ag}_{k-1}\|^2
-\|x^{ag}_k-x^{md}_k\|^2 \right] \nn \\
&\le \alpha_k \cX(x_k) + (1-\alpha_k) \cX(x^{ag}_{k-1}) + \frac{1}{2\beta_k}
\left[\alpha_k^2\|x_k - x_{k-1}\|^2 -\|x^{ag}_k-x^{md}_k\|^2 \right],\nn
\end{align}
where the last inequality follows from the convexity of $\cX$ and \eqnok{Ne}.
Summing up the above inequality with \eqnok{comp_nocvx_lemma1-1} (with both sides multiplied by $\alpha_k$),
we obtain
\begin{align}
\langle \nabla \Psi(x^{md}_k), x^{ag}_k - \alpha_k x - (1-\alpha_k)x^{ag}_{k-1}\rangle  + \cX(x^{ag}_k)
\le (1-\alpha_k) \cX(x^{ag}_{k-1}) + \alpha_k \cX(x) \nn \\
+ \frac{\alpha_k}{2\lambda_k} \left[\|x_{k-1} - x\|^2-\|x_k -x\|^2\right]
+ \frac{\alpha_k (\lambda_k \alpha_k - \beta_k)}{2\beta_k \lambda_k} \|x_k - x_{k-1}\|^2
- \frac{1}{2\beta_k}\|x^{ag}_k - x^{md}_k\|^2\nn \\
\le (1-\alpha_k) \cX(x^{ag}_{k-1}) + \alpha_k \cX(x) + \frac{\alpha_k}{2\lambda_k} \left[\|x_{k-1} - x\|^2-\|x_k -x\|^2\right]
- \frac{1}{2\beta_k}\|x^{ag}_k - x^{md}_k\|^2, \label{main_comp_nocvx_thoem-p2}
\end{align}
where the last inequality follows from the assumption that $\alpha_k \lambda_k \le \beta_k$.
Combining \eqnok{main_comp_nocvx_thoem-p0}, \eqnok{main_comp_nocvx_thoem-p1},
and \eqnok{main_comp_nocvx_thoem-p2}, and using the definition $\Phi(x) \equiv \Psi(x) + \cX(x)$, we have
\begin{align}
\Phi(x^{ag}_k)& \le(1-\alpha_k)\Phi(x^{ag}_{k-1})+\alpha_k \Phi(x) - \frac{1}{2}\left(\frac{1}{\beta_k}-L_\Psi\right) \|x^{ag}_k-x^{md}_k\|^2\nn \\
&+\frac{\alpha_k}{2\lambda_k} \left[\|x_{k-1} - x\|^2-\|x_k - x\|^2 \right]
+\frac{L_f \alpha_k}{2}\|x^{md}_k - x\|^2
+\frac{L_f \alpha_k^2 (1-\alpha_k)}{2} \|x^{ag}_{k-1}-x_{k-1}\|^2.
\label{main_comp_nocvx_thoem_rel}
\end{align}
Subtracting $\Phi(x)$ from both sides of the above inequality, re-arranging the terms, and using Lemma~\ref{Gamma_division}
and relation \eqnok{sum_dist},
we obtain
\[
\frac{\Phi(x^{ag}_N)-\Phi(x)}{\Gamma_N}+\sum_{k=1}^{N}\frac{1-L_\Psi\beta_k}{2\beta_k \Gamma_k} \|x^{ag}_k-x^{md}_k\|^2
\le \frac{\|x_0 - x\|^2}{2\lambda_1}  +\frac{L_f}{2} \sum_{k=1}^{N}
\frac{\alpha_k }{\Gamma_k} [\|x^{md}_k - x\|^2+ \alpha_k (1-\alpha_k)\|x^{ag}_{k-1}-x_{k-1}\|^2].
\]
Now letting $x = x^*$ in the above inequality, and observing that by Assumption~\ref{assump_bnd} and \eqnok{Ne},
\begin{align}
&\|x^{md}_k - x^*\|^2+ \alpha_k(1-\alpha_k) \|x^{ag}_{k-1}-x_{k-1}\|^2 \nn \\
&\le 2 [\|x^*\|^2 + \|x^{md}_k\|^2
+ \alpha_k(1-\alpha_k)(\|x^{ag}_{k-1}\|^2 + \|x_{k-1}\|^2)] \nn\\
&\le 2 [\|x^*\|^2 + (1-\alpha_k) \|x_{k-1}^{ag}\|^2 + \alpha_k \|x_{k-1}\|^2
+ \alpha_k(1-\alpha_k)(\|x^{ag}_{k-1}\|^2 + \|x_{k-1}\|^2)] \nn \\
&\le 2[\|x^*\|^2 + \|x_{k-1}^{ag}\|^2 + \|x_{k-1}\|^2] \le 2(\|x^*\|^2 + 2 M^2),\label{bnd_itr}
\end{align}
we obtain
\begin{align}
\frac{\Phi(x^{ag}_N)-\Phi(x^*)}{\Gamma_N}+\sum_{k=1}^{N}\frac{1-L_\Psi\beta_k}{2\beta_k \Gamma_k} \|x^{ag}_k-x^{md}_k\|^2
\le \frac{\|x_0 - x\|^2}{2\lambda_1}  + L_f \sum_{k=1}^{N}
\frac{\alpha_k }{\Gamma_k} (\|x^*\|^2 + 2 M^2) \nn\\
\le \frac{\|x_0 - x\|^2}{2\lambda_1}  + \frac{L_f}{\Gamma_N}(\|x^*\|^2 + 2 M^2), \label{main_comp_nocvx_thoem-p3}
\end{align}
where the last inequality follows from \eqnok{sum_Gamma}.
The above relation, in view of \eqnok{stepsize_assum1} and the assumption $L_f = 0$, then clearly implies \eqnok{cvx_fun_ncvx}.
Moreover, it follows from the above relation, \eqnok{def_proj_grad1}, and
the fact $\Phi(x^{ag}_N)-\Phi(x^*) \ge 0$ that
\begin{align*}
\sum_{k=1}^{N}\frac{\beta_k(1-L_\Psi\beta_k)}{2\Gamma_k} \|\cG(x^{md}_k,\nabla \Psi(x^{md}_k), \beta_k)\|^2
&= \sum_{k=1}^{N}\frac{1-L_\Psi\beta_k}{2\beta_k \Gamma_k} \|x^{ag}_k-x^{md}_k\|^2 \\
&\le \frac{\|x_0 - x^*\|^2}{2\lambda_1}  + \frac{L_f}{\Gamma_N} (\|x^*\|^2 + 2 M^2),
\end{align*}
which, in view of \eqnok{stepsize_assum1}, then clearly
implies \eqnok{main_comp_nocvx_theom1}.
\end{proof}

\vgap

As shown in Theorem~\ref{main_comp_nocvx_theom}, we can have
a uniform treatment for both convex and nonconvex composite
problems. More specifically,
we allow the same stepsize policies in Theorem~\ref{main_nocvx_theom}.b)
to be used for both convex and nonconvex composite optimization.
In the next result, we specialize the results obtained in Theorem~\ref{main_comp_nocvx_theom} for
a particular selection of $\{\alpha_k\}$, $\{\beta_k\}$, and $\{\lambda_k\}$.

\begin{corollary}\label{comp_nocvx_crly}
Suppose that Assumption~\ref{assump_bnd} holds and that $\{\alpha_k\}$,
$\{\beta_k\}$, and $\{\lambda_k\}$ in Algorithm~\ref{algAC-GD2} are set
to \eqnok{def_alpha_beta} and \eqnok{def_lambda1}.
Also assume that an optimal solution $x^*$ exists for problem~\eqnok{comp_nocvx_prblm}.
Then for any $N \ge 1$, we have
\beq
\min\limits_{k=1,...,N} \|\cG(x^{md}_k,\nabla \Psi(x^{md}_k), \beta_k)\|^2 \le 24 L_\Psi\left[ \frac{4 L_\Psi \|x_0 - x^*\|^2 }{N^2(N+1)}
+\frac{L_f}{N}(\|x^*\|^2+ 2 M^2)\right].\label{main_comp_nocvx2}
\eeq
If, in addition, $L_f = 0$, then we have
\beq
\Phi(x^{ag}_N)-\Phi(x^*) \le \frac{4 L_\Psi \|x_0 - x^*\|^2}
{N(N+1)}.\label{cvx_fun_ncvx1}
\eeq
\end{corollary}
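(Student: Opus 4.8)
The plan is to obtain Corollary~\ref{comp_nocvx_crly} as a direct specialization of Theorem~\ref{main_comp_nocvx_theom}, by substituting the concrete stepsizes \eqnok{def_alpha_beta} and \eqnok{def_lambda1} into the general bounds \eqnok{main_comp_nocvx_theom1} and \eqnok{cvx_fun_ncvx}. The first step is to confirm that these stepsizes are admissible, i.e., that they satisfy the hypotheses \eqnok{stepsize_assum1} and \eqnok{stepsize_equal} required by Theorem~\ref{main_comp_nocvx_theom}. This verification has in fact already been carried out in the proof of Corollary~\ref{nocvx_crly}.b): with $\alpha_k = 2/(k+1)$, $\beta_k = 1/(2L_\Psi)$, and $\lambda_k = k\beta_k/2$, one has $\alpha_k \lambda_k = \tfrac{k}{k+1}\beta_k < \beta_k = 1/(2L_\Psi) < 1/L_\Psi$, and $\alpha_k/(\lambda_k \Gamma_k) = 4L_\Psi$ is constant in $k$, so both conditions hold and Theorem~\ref{main_comp_nocvx_theom} applies verbatim.

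Next I would assemble the three ingredients appearing in \eqnok{main_comp_nocvx_theom1}. From \eqnok{def_Gamma2} we have $\Gamma_k = 2/[k(k+1)]$, hence $\Gamma_N = 2/[N(N+1)]$; from \eqnok{def_lambda1} we have $\lambda_1 = \beta_1/2 = 1/(4L_\Psi)$; and the lower bound $\sum_{k=1}^N \Gamma_k^{-1}\beta_k(1-L_\Psi\beta_k) \ge N^2(N+1)/(24L_\Psi)$ is precisely \eqnok{check_bnd1}, which can be reused since the identical $\alpha_k$, $\beta_k$, $\lambda_k$ are in force. Substituting these into \eqnok{main_comp_nocvx_theom1} turns the leading factor $2[\sum_k \Gamma_k^{-1}\beta_k(1-L_\Psi\beta_k)]^{-1}$ into a quantity bounded by $48L_\Psi/[N^2(N+1)]$, converts $\|x_0-x^*\|^2/(2\lambda_1)$ into $2L_\Psi\|x_0-x^*\|^2$, and converts $(L_f/\Gamma_N)(\|x^*\|^2+2M^2)$ into $\tfrac12 L_f N(N+1)(\|x^*\|^2+2M^2)$. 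Multiplying out and collecting the two resulting terms, with the factor $4L_\Psi$ from $1/(2\lambda_1)$ tracked carefully, yields exactly the constants $96L_\Psi^2$ and $24L_\Psi L_f$, i.e., the claimed bound \eqnok{main_comp_nocvx2}.

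Finally, for the case $L_f=0$ I would simply specialize the functional-gap bound \eqnok{cvx_fun_ncvx}: with $\Gamma_N = 2/[N(N+1)]$ and $\lambda_1 = 1/(4L_\Psi)$, the right-hand side $\Gamma_N\|x_0-x^*\|^2/(2\lambda_1)$ collapses to $4L_\Psi\|x_0-x^*\|^2/[N(N+1)]$, which is precisely \eqnok{cvx_fun_ncvx1}. I do not anticipate any genuine obstacle: the argument is an arithmetic substitution, and every nontrivial estimate it relies on---the admissibility of the stepsizes and the cubic lower bound on $\sum_k \Gamma_k^{-1}$---was already established for this exact stepsize sequence in Corollary~\ref{nocvx_crly}.b). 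The only point demanding care is the correct combination of the numerical constants, in particular keeping track of the factor $4L_\Psi$ arising from $1/(2\lambda_1)$ so that the final leading constants emerge as $24L_\Psi$ and $4L_\Psi$ as stated.
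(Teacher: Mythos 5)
Your proposal is correct and follows essentially the same route as the paper's own proof: verify the stepsize conditions \eqnok{stepsize_assum1} and \eqnok{stepsize_equal} as in Corollary~\ref{nocvx_crly}.b), then substitute $\Gamma_N = 2/[N(N+1)]$, $\lambda_1 = 1/(4L_\Psi)$, and the bound \eqnok{check_bnd1} into \eqnok{main_comp_nocvx_theom1} and \eqnok{cvx_fun_ncvx}. The arithmetic checks out (the only nitpick is that $1/(2\lambda_1) = 2L_\Psi$, not $4L_\Psi$ as your closing remark suggests, though the computation you actually perform uses the correct value).
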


\begin{proof}
The results directly follow by plugging
the value of $\Gamma_k$ in \eqnok{def_Gamma2},
the value of $\lambda_1$ in \eqnok{def_lambda1},
and the bound \eqnok{check_bnd1} into \eqnok{main_comp_nocvx_theom1}
and \eqnok{cvx_fun_ncvx}, respectively.
\end{proof}

\vgap

Clearly, it follows from \eqnok{main_comp_nocvx2}  that after running the AG method
for at most $N= {\cal O}(L_\Psi^\frac{2}{3} /\epsilon^\frac{1}{3} +L_\Psi L_f/\epsilon)$ iterations,
we have $-\nabla \Psi(x^{ag}_N) \in \partial \cX(x^{ag}_N)+ {\cal B}(\epsilon)$.
Using the fact that $L_\Psi = L_f + L_h$, we can easily see that
if either the smooth convex term $h(\cdot)$ or the nonconvex term $f(\cdot)$ becomes zero,
then the previous complexity bound reduces
to ${\cal O}(L_f^2/\epsilon)$ or ${\cal O}(L_h^2/\epsilon^\frac{1}{3})$, respectively.

It is interesting to compare the rate of convergence obtained in \eqnok{main_comp_nocvx2} with
the one obtained in \cite{GhaLanZhang13-1} for the projected gradient method applied to problem \eqnok{comp_nocvx_prblm} .
More specifically, let $\{p_k\}$ and $\{\nu_k\}$, respectively, denote the iterates and stepsizes in the projected gradient method.
Also assume that the component $\cX(\cdot)$ in \eqnok{comp_nocvx_prblm}
is Lipschitz continuous with Lipschtiz constant $L_\cX$.
Then, by Corollary 1 of \cite{GhaLanZhang13-1},  we have
\begin{align}
&\min_{k=1, \ldots,N} \|\cG(p_k, \nabla \Psi(p_k), \nu_k\|^2 \le \frac{L_\Psi [\Phi(p_0) - \Phi(x^*)]}{N} \nn\\
& \le \frac{L_\Psi}{N}\left(\|\nabla \Psi(x^*)\| + L_\cX\right)
(\| x^*\| + M) + \frac{L_\Psi^2}{N} (\|x^*\|^2 + M^2), \label{bnd_pg}
\end{align}
where the last inequality follows from
\begin{align*}
\Phi(p_0) - \Phi(x^*) &= \Psi(p_0) - \Psi(x^*) + \cX(p_0) - \cX(x^*) \\
&\le \langle \nabla \Psi(x^*), p_0 - x^* \rangle + \frac{L_\Psi}{2} \|p_0-x^*\|^2 + L_\cX \|p_0 - x^*\|\\
&\le \left(\|\nabla \Psi(x^*)\| + L_\cX\right) \|p_0 - x^*\| + \frac{L_\Psi}{2} \|p_0 - x^*\|^2\\
&\le \left(\|\nabla \Psi(x^*)\| + L_\cX\right) (\| x^*\| + M) + L_\Psi (\|x^*\|^2 + M^2).
\end{align*}
Comparing \eqnok{main_comp_nocvx2} with \eqnok{bnd_pg}, we can
make the following observations. First, the bound in \eqnok{main_comp_nocvx2}
does not depend on $L_\cX$ while the one in \eqnok{bnd_pg} may depend on $L_\cX$.
Second, if the second terms in both \eqnok{main_comp_nocvx2} and \eqnok{bnd_pg}
are the dominating ones,
then the rate of convergence of the AG method is bounded by ${\cal O}(L_\Psi L_f/N)$,
which is better than the ${\cal O} (L_\Psi^2 / N)$ rate of convergence possessed by the projected
gradient method, in terms of their dependence on the Lipschitz constant $L_h$.
Third, consider the case when $L_f = {\cal O}(L_h/N^2)$. By
\eqnok{main_comp_nocvx2}, we have
\[
\min\limits_{k=1,...,N} \|\cG(x^{md}_k,\nabla \Psi(x^{md}_k), \beta_k)\|^2 \le \frac{96 L_\Psi^2 \|x_0-x^*\|^2 }{N^3}
\left(1+\frac{L_f N^2 (\|x^*\|^2 + 2 M^2))}{4 (L_f + L_h) \|x_0-x^*\|^2}\right),
\]
which implies that the rate of convergence of the AG method is bounded by
\[
{\cal O}\left(\frac{L_h^2 }{N^3} \left[\|x_0-x^*\|^2 + \|x^*\|^2 + M^2\right]\right).
\]
The previous bound is significantly better than the ${\cal O} (L_h^2/N)$
rate of convergence possessed by the projected gradient method for this particular case.
Finally, it should be noted, however, that the projected gradient method
in \cite{GhaLanZhang13-1} can be used to solve
more general problems as it does not require the domain of
$\cX$ to be bounded. Instead, it  only requires
the objective function $\Phi(x)$ to be bounded
from below.

\setcounter{equation}{0}
\section{The stochastic accelerated gradient method}
\label{sec_RRSAG}

Our goal in this section is to present
a stochastic counterpart of the AG algorithm
for solving stochastic optimization problems.
More specifically, we discuss the convergence of this algorithm for solving general smooth (possibly nonconvex)
SP problems in Subsection~\ref{nocvx-SA_sec}, and for a special class of
composite SP problems in Subsection~\ref{comp_nocvx-SA_sec}.

\subsection{Minimization of stochastic smooth functions}\label{nocvx-SA_sec}
In this subsection, we consider problem \eqnok{NLP}, where
$\Psi \in {\cal C}_{L_{\Psi}}^{1,1}(\bbr^n)$ is bounded from below. Moreover, we assume that
the first-order information of $\Psi(\cdot)$ is obtained by the $\SO$,
which satisfies Assumption~\ref{assump_st_grad}. It should also be mentioned that in the standard setting for SP,
the random vectors $\xi_k$, $k = 1,2, \ldots$, are
independent of each other (see, e.g., \cite{nemyud:83,NJLS09-1}). However, our assumption
here is slightly weaker, since we do not need to require $\xi_k$, $k = 1,2, \ldots$, to be independent.

While Nesterov's method has been generalized by Lan~\cite{Lan10-3}
to achieve the optimal rate of convergence for solving both smooth and nonsmooth
convex SP problem, it is unclear whether it converges for
nonconvex SP problems. On the other hand,
although the RSG method (\cite{GhaLan12}) converges for nonconvex SP problems, it
cannot achieve the optimal rate of convergence when applied to convex SP problems.
Below, we present a new SA-type algorithm, namely, the randomized stochastic AG (RSAG) method
which not only converges for nonconvex SP problems, but also achieves an optimal
rate of convergence when applied to convex SP problems by properly specifying
the stepsize policies.

The RSAG method is obtained by replacing the exact gradients in Algorithm~\ref{algAC-GD}
with the stochastic ones and incorporating a
randomized termination criterion for nonconvex SP first studied in \cite{GhaLan12}.
This algorithm is formally described as follows.

\begin{algorithm} [H]
	\caption{The randomized stochastic AG (RSAG) algorithm}
	\label{algRSAG}
	\begin{algorithmic}
\STATE Input:  $x_0 \in \bbr^n$, $\{\alpha_k\}$
s.t. $\alpha_1 = 1$ and $\alpha_k \in (0,1)$ for any $k \ge 2$, $\{\beta_k > 0\}$ and
$\{\lambda_k>0\}$,
iteration limit $N \ge 1$, and probability mass function $P_{R}(\cdot)$
s.t.
\beq \label{def_prob_R}
\prob\{ R = k\} = p_k, \ \ \ k = 1, \ldots, N.
\eeq
\STATE 0. Set $x^{ag}_0=x_0$ and $k=1$.
Let $R$ be a random variable with probability mass function $P_{R}$.


\STATE 1. Set $x^{md}_k$ to \eqnok{Ne}.
\STATE 2. Call the $\SO$ for computing $G(x^{md}_k, \xi_k)$ and set
\beqa
x_k &=& x_{k-1}-\lambda_k G(x^{md}_k, \xi_k), \label{Ne3-SA}\\
x^{ag}_k &=& x^{md}_k - \beta_k G(x^{md}_k, \xi_k). \label{Ne4-SA}
\eeqa
\STATE 3. If $k = R$, {\bf terminate} the algorithm.
Otherwise, set $k=k+1$ and go to step 1.
\end{algorithmic}
\end{algorithm}
\vgap

We now add a few remarks about the above RSAG algorithm.
First, similar to our discussion in the previous section,
if $\alpha_k=1, \ \ \beta_k=\lambda_k \ \ \forall k \ge 1$, then the above algorithm
reduces to the classical SA algorithm. Moreover, if $\beta_k=\lambda_k \ \ \forall k \ge 1$, the above algorithm reduces to
the accelerated SA method in \cite{Lan10-3}.
Second, we have used a random number $R$ to terminate the above RSAG method
for solving general (not necessarily convex) NLP problems.
Equivalently, one can run the RSAG method for $N$ iterations and
then randomly select the search points $(x^{md}_R, x^{ag}_R)$ as
the output of Algorithm~\ref{algRSAG}
from the trajectory $(x^{md}_k,x^{ag}_k)$, $k = 1, \ldots, N$.
Note, however, that the remaining $N-R$ iterations will be surplus.

We are now ready to describe the main convergence properties of the RSAG algorithm
applied to problem \eqnok{NLP} under the stochastic setting.

\begin{theorem}\label{main_nocvx-SA_theom}
Let $\{x^{md}_k, x^{ag}_k\}_{k \ge 1}$ be computed by Algorithm~\ref{algRSAG} and $\Gamma_k$ be defined in \eqnok{def_Gamma}.
Also suppose that Assumption~\ref{assump_st_grad} holds.
\begin{itemize}
\item [a)] If $\{\alpha_k\}$, $\{\beta_k\}$, $\{\lambda_k\}$, and $\{p_k\}$ are chosen such that \eqnok{def_Ck} holds and
\beq \label{prob_fun}
p_k =\frac{\lambda_k C_k}{\sum_{k=1}^N \lambda_k C_k}, \ \ k= 1,\ldots,N,
\eeq
where $C_k$ is defined in \eqnok{def_Ck}, then for any $N \ge 1$, we have
\beq \label{main_nocvx-SA}
\bbe[\|\nabla \Psi(x^{md}_R)\|^2] \le \frac{1}{\sum_{k=1}^{N} \lambda_k C_k}
\left[ \Psi(x_0)-\Psi^*+\frac{L_\Psi \sigma^2}{2} \sum_{k=1}^{N} \lambda_k^2
\left(1+ \frac{(\lambda_k-\beta_k)^2}{\alpha_k \Gamma_k \lambda_k^2}\sum_{\tau=k}^{N} \Gamma_{\tau}\right)\right],
\eeq
where the expectation is taken with respect to $R$ and $\xi_{[N]} := (\xi_1,...,\xi_N)$.
\item [b)] Suppose that $\Psi(\cdot)$ is convex and that
an optimal solution $x^*$ exists for problem \eqnok{NLP}.
If $\{\alpha_k\}$, $\{\beta_k\}$, $\{\lambda_k\}$, and $\{p_k\}$ are chosen such
that \eqnok{stepsize_equal} holds,
\beq \label{stepsize_assum2}
\alpha_k \lambda_k \le L_\Psi \beta_k^2, \ \ \beta_k < 1/L_\Psi,
\eeq
and
\beq \label{prob_fun_cvx}
p_k =\frac{\Gamma_k^{-1} \beta_k (1 - L_\Psi \beta_k) }{\sum_{k=1}^N \Gamma_k^{-1} \beta_k (1 - L_\Psi \beta_k)}
\eeq
for all $k= 1,...,N$, then for any $N \ge 1$, we have
\beq
\bbe[\|\nabla \Psi(x^{md}_R)\|^2] \le
\frac{
(2\lambda_1)^{-1}\|x_0-x^*\|^2 +  L_\Psi \sigma^2 \sum_{k=1}^N \Gamma_k^{-1} \beta_k^2}
{\sum_{k=1}^N \Gamma_k^{-1} \beta_k (1 - L_\Psi \beta_k)},\label{main_cvx-SA}
\eeq
\beq \label{main_cvx-SA1}
\bbe[\Psi(x^{ag}_R) - \Psi(x^*)] \le \frac{\sum_{k=1}^N\beta_k (1 - L_\Psi \beta_k)
\left[ (2 \lambda_1 )^{-1} \|x_0-x^*\|^2
+ L_\Psi  \sigma^2\sum_{j=1}^k \Gamma_j^{-1} \beta_j^2\right]
}{\sum_{k=1}^N \Gamma_k^{-1} \beta_k (1 - L_\Psi \beta_k)}.
\eeq
%
\end{itemize}
\end{theorem}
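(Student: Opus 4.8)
The plan is to retrace the deterministic arguments of Theorem~\ref{main_nocvx_theom} while carrying along the stochastic error $\delta_k := G(x^{md}_k,\xi_k)-\nabla\Psi(x^{md}_k)$, and then to convert the resulting weighted averages over $k$ into expectations over the random index $R$ by exploiting the definitions \eqnok{prob_fun} and \eqnok{prob_fun_cvx}. Throughout, I would condition on the history $\xi_{[k-1]}$ and use that $x^{md}_k$, $x_{k-1}$ and $\Delta_k$ are $\xi_{[k-1]}$-measurable, so that $\bbe[\delta_k\mid\xi_{[k-1]}]=0$ and $\bbe[\|\delta_k\|^2\mid\xi_{[k-1]}]\le\sigma^2$ by Assumption~\ref{assump_st_grad}; the tower property then removes all cross terms linear in $\delta_k$ and bounds each quadratic term by $\sigma^2$.

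For part a), I would begin exactly as in \eqnok{nocvx1}--\eqnok{nocvx2}, but with the stochastic step \eqnok{Ne3-SA}, writing $G(x^{md}_k,\xi_k)=\nabla\Psi(x^{md}_k)+\delta_k$ in the smoothness inequality \eqnok{f_smooth2}. This produces the deterministic right-hand side of \eqnok{nocvx2} plus the extra terms $-\lambda_k\langle\nabla\Psi(x^{md}_k),\delta_k\rangle$ and $-\lambda_k\langle\Delta_k,\delta_k\rangle$, the bilinear piece coming from $\tfrac{L_\Psi\lambda_k^2}{2}\|G(x^{md}_k,\xi_k)\|^2$, and the genuinely new variance term $\tfrac{L_\Psi\lambda_k^2}{2}\|\delta_k\|^2$. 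The crucial step is to bound $\|x^{ag}_{k-1}-x_{k-1}\|^2$ by first applying the Jensen inequality exactly as in \eqnok{bnd_ag_k}, now with $G(x^{md}_\tau,\xi_\tau)$ in place of $\nabla\Psi(x^{md}_\tau)$; this yields $\Gamma_{k-1}\sum_{\tau<k}\tfrac{(\lambda_\tau-\beta_\tau)^2}{\Gamma_\tau\alpha_\tau}\|\nabla\Psi(x^{md}_\tau)+\delta_\tau\|^2$, in which every squared norm carries a single index $\tau$, so that after expectation its cross term $\langle\nabla\Psi(x^{md}_\tau),\delta_\tau\rangle$ vanishes and $\|\delta_\tau\|^2$ contributes $\sigma^2$. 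Summing the per-step recursion over $k$, swapping the order of summation as in the passage to \eqnok{nocvx3}, and taking total expectation then reproduces $\Psi(x_0)-\Psi^*-\sum_k\lambda_k C_k\bbe[\|\nabla\Psi(x^{md}_k)\|^2]$ plus precisely the variance contribution $\tfrac{L_\Psi\sigma^2}{2}\sum_k\lambda_k^2(1+\tfrac{(\lambda_k-\beta_k)^2}{\alpha_k\Gamma_k\lambda_k^2}\sum_{\tau\ge k}\Gamma_\tau)$. Rearranging, using $\bbe[\Psi(x_N)]\ge\Psi^*$, and finally invoking \eqnok{prob_fun} so that $\bbe[\|\nabla\Psi(x^{md}_R)\|^2]=\sum_k\lambda_k C_k\bbe[\|\nabla\Psi(x^{md}_k)\|^2]/\sum_k\lambda_k C_k$ delivers \eqnok{main_nocvx-SA}.

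For part b), I would mirror \eqnok{main_cvx_theom-p1}--\eqnok{cvx_recur}. The smoothness bound at $x^{ag}_k$ via \eqnok{Ne4-SA} now reads $\Psi(x^{ag}_k)\le\Psi(x^{md}_k)-\beta_k\|\nabla\Psi(x^{md}_k)\|^2-\beta_k\langle\nabla\Psi(x^{md}_k),\delta_k\rangle+\tfrac{L_\Psi\beta_k^2}{2}\|G(x^{md}_k,\xi_k)\|^2$, and the three-point identity applied to \eqnok{Ne3-SA} expresses $\alpha_k\langle G(x^{md}_k,\xi_k),x_{k-1}-x\rangle$ through the telescoping distances together with $\tfrac{\alpha_k\lambda_k}{2}\|G(x^{md}_k,\xi_k)\|^2$; converting $\langle\nabla\Psi(x^{md}_k),\cdot\rangle$ to $\langle G(x^{md}_k,\xi_k),\cdot\rangle$ costs only the term $-\alpha_k\langle\delta_k,x_{k-1}-x\rangle$, which vanishes in expectation when $x=x^*$ is deterministic. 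Here the role of the new stepsize condition \eqnok{stepsize_assum2} becomes apparent: collecting the coefficient of $\|\nabla\Psi(x^{md}_k)\|^2$ gives $-\beta_k+\tfrac{L_\Psi\beta_k^2}{2}+\tfrac{\alpha_k\lambda_k}{2}$, and $\alpha_k\lambda_k\le L_\Psi\beta_k^2$ bounds this by $-\beta_k(1-L_\Psi\beta_k)$, while simultaneously letting the two variance contributions $\tfrac{L_\Psi\beta_k^2}{2}\sigma^2$ and $\tfrac{\alpha_k\lambda_k}{2}\sigma^2$ consolidate into the clean form $L_\Psi\beta_k^2\sigma^2$. Dividing the recursion by $\Gamma_k$ and applying Lemma~\ref{Gamma_division} together with \eqnok{sum_dist}, telescoping to $N$ while keeping the gradient terms yields \eqnok{main_cvx-SA} after dropping $\bbe[\Psi(x^{ag}_N)-\Psi^*]\ge0$ and weighting by \eqnok{prob_fun_cvx}; telescoping instead only up to each index $k$, dropping the non-positive gradient terms, and then averaging with the same weights yields the per-iterate bound whose weighted sum is \eqnok{main_cvx-SA1}.

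The main obstacle I anticipate is the stochastic bookkeeping rather than any single inequality: one must ensure that every cross term linear in $\delta_k$ is attached to a $\xi_{[k-1]}$-measurable factor so that it vanishes under $\bbe[\,\cdot\mid\xi_{[k-1]}]$, and that the variance terms assemble into exactly the stated coefficients. The most delicate point is the treatment of $\|x^{ag}_k-x_k\|^2$ in part a): applying Jensen's inequality \emph{before} taking expectation is essential, since it forces each squared norm to involve a single noise vector $\delta_\tau$; expanding the square first would instead generate cross terms $\langle\nabla\Psi(x^{md}_\tau),\delta_{\tau'}\rangle$ with $\tau'<\tau$ whose factors are correlated and need not vanish. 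In part b), the corresponding care lies in verifying that condition \eqnok{stepsize_assum2} does the double duty of controlling both the residual gradient coefficient and the aggregated variance.
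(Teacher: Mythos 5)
Your proposal is correct and follows essentially the same route as the paper: carry the noise $\delta_k$ through the deterministic recursions of Theorem~\ref{main_nocvx_theom}, apply Jensen's inequality to $\|x^{ag}_k-x_k\|^2$ before expanding so that each squared norm carries a single noise index, kill the linear-in-$\delta_k$ terms as martingale differences, bound the quadratic terms by $\sigma^2$, and convert the resulting weighted sums into expectations over $R$ via \eqnok{prob_fun} and \eqnok{prob_fun_cvx}; your accounting of how \eqnok{stepsize_assum2} simultaneously yields the coefficient $-\beta_k(1-L_\Psi\beta_k)$ and the variance weight $L_\Psi\beta_k^2$ is exactly the paper's computation. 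No gaps.
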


\begin{proof}
We first show part a).
Denote $\delta_k:=G(x^{md}_k,\xi_k)-\nabla \Psi(x^{md}_k)$ and $\Delta_k:=\nabla \Psi(x_{k-1})-\nabla \Psi(x^{md}_k)$.
By \eqnok{f_smooth2} and \eqnok{Ne3-SA}, we have
\beqa
\Psi(x_k) &\le&
\Psi(x_{k-1}) + \langle \nabla \Psi(x_{k-1}) , x_k-x_{k-1} \rangle +\frac{L_\Psi}{2} \|x_k-x_{k-1}\|^2 \nn \\
&=& \Psi(x_{k-1})+ \langle \Delta_k+\nabla \Psi(x^{md}_k), -\lambda_k [\nabla \Psi(x^{md}_k)+\delta_k] \rangle + \frac{L_\Psi \lambda_k^2}{2}
\|\nabla \Psi(x^{md}_k)+\delta_k\|^2 \nonumber \\
&=& \Psi(x_{k-1})+ \langle \Delta_k+\nabla \Psi(x^{md}_k), -\lambda_k \nabla \Psi(x^{md}_k)\rangle -\lambda_k \langle \nabla \Psi(x_{k-1}) , \delta_k \rangle+ \frac{L_\Psi \lambda_k^2}{2}\|\nabla \Psi(x^{md}_k)+\delta_k\|^2 \nonumber \\
&\le&
\Psi(x_{k-1}) -\lambda_k \left(1-\frac{L_\Psi \lambda_k}{2}\right)\|\nabla \Psi(x^{md}_k)\|^2 +\lambda_k \|\Delta_k\| \, \|\nabla \Psi(x^{md}_k)\|+\frac{L_\Psi \lambda_k^2}{2}\|\delta_k\|^2 \nn \\
&& - \lambda_k \langle \nabla \Psi(x_{k-1})-L_\Psi \lambda_k \nabla \Psi(x^{md}_k) , \delta_k \rangle,\nn
\eeqa
which, in view of \eqnok{bound_Delta_k} and the fact that $ab \le (a^2+b^2)/2$, then implies that
\beqa
\Psi(x_k) &\le&
\Psi(x_{k-1}) -\lambda_k \left(1-\frac{L_\Psi \lambda_k}{2}\right)\|\nabla \Psi(x^{md}_k)\|^2 +
\lambda_k L_\Psi (1-\alpha_k) \|x^{ag}_{k-1} - x_{k-1}\| \, \|\nabla \Psi(x^{md}_k)\| \nn \\
&& +\frac{L_\Psi \lambda_k^2}{2}\|\delta_k\|^2- \lambda_k \langle \nabla \Psi(x_{k-1})-L_\Psi \lambda_k \nabla \Psi(x^{md}_k) , \delta_k \rangle \nn\\
&\le& \Psi(x_{k-1}) -\lambda_k (1-L_\Psi \lambda_k)\|\nabla \Psi(x^{md}_k)\|^2+ \frac{L_\Psi (1-\alpha_k)^2}{2} \|x^{ag}_{k-1}-x_{k-1}\|^2+\frac{L_\Psi \lambda_k^2}{2}\|\delta_k\|^2 \nn \\
&& - \lambda_k \langle \nabla \Psi(x_{k-1})-L_\Psi \lambda_k \nabla \Psi(x^{md}_k) , \delta_k \rangle. \nn
\eeqa
Noting that similar to \eqnok{bnd_ag_k}, we have
 \beqa
\|x^{ag}_{k-1}-x_{k-1}\|^2 &\le& \Gamma_{k-1} \sum_{\tau=1}^{k-1} \frac{(\lambda_{\tau}-\beta_{\tau})^2}
{\Gamma_{\tau} \alpha_{\tau}}\|\nabla \Psi(x^{md}_{\tau}) + \delta_k\|^2 \nn\\
&=& \Gamma_{k-1} \sum_{\tau=1}^{k-1} \frac{(\lambda_{\tau}-\beta_{\tau})^2}
{\Gamma_{\tau} \alpha_{\tau}}\left[\|\nabla \Psi(x^{md}_{\tau})\|^2+\|\delta_{\tau}\|^2
+2 \langle \nabla \Psi(x^{md}_{\tau}),\delta_{\tau} \rangle \right]. \nn
\eeqa
Combining the previous two inequalities and using the fact that $\Gamma_{k-1} (1-\alpha_k)^2 \le \Gamma_k$, we obtain
\beqas
\Psi(x_k) &\le& \Psi(x_{k-1})-\lambda_k (1-L_\Psi \lambda_k)\|\nabla \Psi(x^{md}_k)\|^2+
\frac{L_\Psi \lambda_k^2}{2}\|\delta_k\|^2 -\lambda_k \langle \nabla \Psi(x_{k-1})-L_\Psi \lambda_k \nabla \Psi(x^{md}_k) , \delta_k \rangle \nn \\
&& + \frac{L_\Psi \Gamma_k}{2} \sum_{\tau=1}^{k} \frac{(\lambda_{\tau}-\beta_{\tau})^2}{\Gamma_{\tau} \alpha_{\tau}}\left[\|\nabla \Psi(x^{md}_{\tau})\|^2+\|\delta_{\tau}\|^2 +2 \langle \nabla \Psi(x^{md}_{\tau}),\delta_{\tau} \rangle \right].
\eeqas
Summing up the above inequalities, we obtain
\beqa
\Psi(x_N) &\le& \Psi(x_0)- \sum_{k=1}^{N} \lambda_k (1-L_\Psi \lambda_k)\|\nabla \Psi(x^{md}_k)\|^2
 -\sum_{k=1}^{N} \lambda_k \langle \nabla \Psi(x_{k-1})-L_\Psi \lambda_k \nabla \Psi(x^{md}_k) , \delta_k \rangle
\nn \\
&& +\sum_{k=1}^{N}\frac{L_\Psi \lambda_k^2}{2}\|\delta_k\|^2  -\frac{L_\Psi}{2} \sum_{k=1}^{N} \Gamma_k \sum_{\tau=1}^{k} \frac{(\lambda_{\tau}-\beta_{\tau})^2}{\Gamma_{\tau} \alpha_{\tau}}\left[\|\nabla \Psi(x^{md}_{\tau})\|^2+\|\delta_{\tau}\|^2 +2 \langle \nabla \Psi(x^{md}_{\tau}),\delta_{\tau} \rangle \right]
\nn \\
&=& \Psi(x_0)- \sum_{k=1}^{N} \lambda_k C_k \|\nabla \Psi(x^{md}_k)\|^2+
\frac{L_\Psi}{2} \sum_{k=1}^{N} \lambda_k^2
\left(1+ \frac{(\lambda_k-\beta_k)^2}{\alpha_k \Gamma_k \lambda_k^2}\sum_{\tau=k}^{N} \Gamma_{\tau}\right) \|\delta_k\|^2
-\sum_{k=1}^{N} b_k, \nn
\eeqa
where $b_k = \langle \lambda_k \nabla \Psi(x_{k-1})-\left[L_\Psi \lambda_k^2 +\frac{L_\Psi(\lambda_k-\beta_k)^2}{\Gamma_k \alpha_k}\left(\sum_{\tau=k}^{N} \Gamma_{\tau}\right)\right] \nabla \Psi(x^{md}_k),\delta_k \rangle$.
Taking expectation w.r.t. $\xi_{[N]}$ on both sides of the above inequality
and noting that under Assumption~\ref{assump_st_grad}, $\bbe[\|\delta_k\|^2]
\le \sigma^2$ and $\{b_k\}$ is a martingale difference, we have
\beqas
\sum_{k=1}^{N} \lambda_k C_k \bbe_{\xi_{[N]}}[\|\nabla \Psi(x^{md}_k)\|^2] \le \Psi(x_0)-\Psi(x_N)
+\frac{L_\Psi \sigma^2}{2} \sum_{k=1}^{N} \lambda_k^2
\left(1+ \frac{(\lambda_k-\beta_k)^2}{\alpha_k \Gamma_k \lambda_k^2}\sum_{\tau=k}^{N} \Gamma_{\tau}\right).
\eeqas
Dividing both sides of the above relation by $\sum_{k=1}^{N} \lambda_k C_k$, and
using the facts that $\Psi(x_N) \ge \Psi^*$ and
\[
\bbe[\|\nabla \Psi(x^{md}_R)\|^2] = \bbe_{R, \xi_{[N]}}[\|\nabla \Psi(x^{md}_R)\|^2] =\frac{\sum_{k=1}^{N} \lambda_k C_k \bbe_{\xi_{[N]}}[\|\nabla \Psi(x^{md}_k)\|^2]}{\sum_{k=1}^{N} \lambda_k C_k},
\]
we obtain \eqnok{main_nocvx-SA}.

We now show part b).
By \eqnok{f_smooth2}, \eqnok{Ne4-SA}, and \eqnok{main_cvx_theom-p2}, we have
\beqa
\Psi(x^{ag}_k) &\le& \Psi(x^{md}_k) + \langle \nabla \Psi(x^{md}_k) , x^{ag}_k-x^{md}_k \rangle +\frac{L_\Psi}{2} \|x^{ag}_k-x^{md}_k\|^2 \nn \\
&=& \Psi(x^{md}_k) -\beta_k \|\nabla \Psi(x^{md}_k)\|^2+ \beta \langle \nabla \Psi(x^{md}_k) , \delta_k \rangle
+\frac{L_\Psi \beta_k^2}{2}\|\nabla \Psi(x^{md}_k)+\delta_k\|^2\nn\\
&\le& (1-\alpha_k) \Psi(x^{ag}_{k-1}) + \alpha_k \Psi(x)
+ \alpha_k \langle \nabla \Psi(x^{md}_k), x_{k-1} - x \rangle \nn\\
&& -\beta_k \|\nabla \Psi(x^{md}_k)\|^2+ \beta_k \langle \nabla \Psi(x^{md}_k) , \delta_k \rangle+\frac{L_\Psi \beta_k^2}{2}\|\nabla \Psi(x^{md}_k)+\delta_k\|^2. \label{cvx_theom-SA-p1}
\eeqa
Similar to \eqnok{main_cvx_theom-p3}, we have
\[
\alpha_k \langle \nabla \Psi(x^{md}_k) + \delta_k, x_{k-1} - x \rangle = \frac{\alpha_k}{2 \lambda_k}
\left[ \|x_{k-1}-x\|^2 -  \|x_{k}-x\|^2 \right] + \frac{\alpha_k \lambda_k}{2}  \|\nabla \Psi(x^{md}_k) + \delta_k\|^2.
\]
Combining the above two inequalities and using the fact that
\[
\|\nabla \Psi(x^{md}_k)+\delta_k\|^2=\|\nabla \Psi(x^{md}_k)\|^2+\|\delta_k\|^2+2\langle \nabla \Psi(x^{md}_k),\delta_k \rangle,
\]
we obtain
\beqa
\Psi(x^{ag}_k) &\le& (1-\alpha_k)\Psi(x^{ag}_{k-1})+\alpha_k \Psi(x)+\frac{\alpha_k}{2 \lambda_k} \left[\|x_{k-1}-x\|^2-\|x_k-x\|^2\right]\nn \\
&-&\beta_k\left(1-\frac{L_\Psi \beta_k}{2}-\frac{\alpha_k \lambda_k}{2\beta_k}\right)\|\nabla \Psi(x^{md}_k)\|^2+ \left(\frac{L_\Psi \beta_k^2+\alpha_k \lambda_k}{2}\right)\|\delta_k\|^2\nn \\
&+& \langle \delta_k , (\beta_k+L_\Psi \beta_k^2+\alpha_k \lambda_k)\nabla \Psi(x^{md}_k)+\alpha_k(x-x_{k-1}) \rangle. \nn
\eeqa
Subtracting $\Psi(x)$ from both sides of the above inequality,
and using Lemma~\ref{Gamma_division} and \eqnok{sum_dist}, we have
\beqa
\frac{\Psi(x^{ag}_N)-\Psi(x)}{\Gamma_N} &\le&
\frac{\|x_0-x\|^2}{2 \lambda_1 }-\sum_{k=1}^N \frac{\beta_k}{2 \Gamma_k} \left(2-L_\Psi \beta_k-\frac{\alpha_k \lambda_k}{\beta_k}\right)\|\nabla \Psi(x^{md}_k)\|^2 \nn \\ &+&\sum_{k=1}^N\left(\frac{L_\Psi \beta_k^2+\alpha_k \lambda_k}{2 \Gamma_k}\right)\|\delta_k\|^2
+\sum_{k=1}^N b'_k \ \ \forall x \in \bbr^n, \nn
\eeqa
where $b'_k=\Gamma_k^{-1} \langle \delta_k , (\beta_k+L_\Psi \beta_k^2+\alpha_k \lambda_k)\nabla \Psi(x^{md}_k)+\alpha_k(x-x_{k-1}) \rangle$.
The above inequality together with the first relation in \eqnok{stepsize_assum2}
then imply that
\beqa
\frac{\Psi(x^{ag}_N)-\Psi(x)}{\Gamma_N} &\le&
\frac{\|x_0-x\|^2}{2 \lambda_1 }-\sum_{k=1}^N \frac{\beta_k}{\Gamma_k} \left(1-L_\Psi \beta_k\right)\|\nabla \Psi(x^{md}_k)\|^2 \nn
\\ &+&\sum_{k=1}^N\frac{L_\Psi \beta_k^2}{\Gamma_k}\|\delta_k\|^2
+\sum_{k=1}^N b'_k \ \ \forall x \in \bbr^n. \nn
\eeqa
Taking expectation (with respect to $\xi_{[N]}$)
on both sides of the above relation, and noting that under Assumption~\ref{assump_st_grad}, $\bbe[\|\delta_k\|^2]
\le \sigma^2$ and $\{b_k'\}$ is a martingale difference, we obtain, $\forall x \in \bbr^n$,
\beq \label{cvx_recur-SA}
\frac{1}{\Gamma_N}\bbe_{\xi_{[N]}}[\Psi(x^{ag}_N)-\Psi(x)] \le
\frac{\|x_0-x\|^2}{2 \lambda_1 }-\sum_{k=1}^N \frac{\beta_k}{\Gamma_k}  \left(1-L_\Psi \beta_k\right)
\bbe_{\xi_{[N]}}[\|\nabla \Psi(x^{md}_k)\|^2]
+\sigma^2\sum_{k=1}^N\frac{L_\Psi \beta_k^2}{\Gamma_k}.
\eeq
Now, fixing $x=x^*$ and noting that $\Psi(x^{ag}_N) \ge \Psi(x^*)$, we have
\[
\sum_{k=1}^N \frac{\beta_k}{\Gamma_k}  \left(1-L_\Psi \beta_k\right)\bbe_{\xi_{[N]}}[\|\nabla \Psi(x^{md}_k)\|^2] \nn \\
\le\frac{\|x_0-x^*\|^2}{2 \lambda_1} + \sigma^2 \sum_{k=1}^N \frac{L_\Psi \beta_k^2}{\Gamma_k},
\]
which, in view of the definition of $x^{md}_R$, then implies \eqnok{main_cvx-SA}.
It also follows from \eqnok{cvx_recur-SA} and \eqnok{stepsize_assum2} that, for any $N \ge 1$,
\[
\bbe_{\xi_{[N]}}[\Psi(x^{ag}_N)-\Psi(x^*)] \le
\Gamma_N\left(\frac{\|x_0-x\|^2}{2 \lambda_1 }
+\sigma^2\sum_{k=1}^N\frac{L_\Psi \beta_k^2}{\Gamma_k}\right),
\]
which, in view of the definition of $x^{ag}_R$, then implies that
\beqas
\bbe[\Psi(x^{ag}_R)-\Psi(x^*)] &=&
\sum_{k= 1}^N \frac{\Gamma_k^{-1} \beta_k (1 - L_\Psi \beta_k) }{\sum_{k=1}^N \Gamma_k^{-1} \beta_k (1 - L_\Psi \beta_k)}
\bbe_{\xi_{[N]}}[\Psi(x^{ag}_k)-\Psi(x^*)]\\
&\le& \frac{\sum_{k=1}^N\beta_k (1 - L_\Psi \beta_k)
\left[ (2 \lambda_1 )^{-1} \|x_0-x\|^2
+ L_\Psi  \sigma^2\sum_{j=1}^k \Gamma_j^{-1} \beta_j^2\right]
}{\sum_{k=1}^N \Gamma_k^{-1} \beta_k (1 - L_\Psi \beta_k)}.
\eeqas
\end{proof}

\vgap

We now add a few remarks about the results obtained in Theorem~\ref{main_nocvx-SA_theom}.
First, note that similar to the deterministic case, we can use the assumption in \eqnok{stepsize_assum1-1} instead
of the one in \eqnok{stepsize_assum2}.
Second, the expectations in \eqnok{main_nocvx-SA}, \eqnok{main_cvx-SA}, and \eqnok{main_cvx-SA1} are
taken with respect to one more random variable $R$ in addition to $\xi$ coming from the $\SO$.
Specifically, the output of the Algorithm 3 is chosen randomly from the generated trajectory
$\{(x^{md}_1,x^{ag}_1),\ldots, (x^{md}_N,x^{ag}_N)\}$ according to \eqnok{def_prob_R}, as mentioned earlier in this subsection.
Third, the probabilities $\{p_k\}$ depend on the choice of $\{\alpha_k\}$, $\{\beta_k\}$, and $\{\lambda_k\}$.

Below, we specialize the results obtained in Theorem~\ref{main_nocvx-SA_theom}
for some particular selections of $\{\alpha_k\}$, $\{\beta_k\}$, and $\{\lambda_k\}$.

\begin{corollary}\label{nocvx-SA_crly}
The following statements hold for Algorithm~\ref{algRSAG} when applied to problem~\eqnok{NLP}
under Assumption~\ref{assump_st_grad}.
\begin{itemize}
\item [a)] If $\{\alpha_k\}$ and $\{\lambda_k\}$ in the RSAG method are set
to \eqnok{def_alpha_beta} and \eqnok{def_lambda}, respectively, $\{p_k\}$ is set to \eqnok{prob_fun}, $\{\beta_k\}$ is set to
\beq
\beta_k = \min \left\{\frac{8}{21 L_\Psi}, \frac{\tilde D}{\sigma \sqrt{N}}\right\}, \ \ k \ge 1 \label{def_beta-SA}
\eeq
for some $\tilde D >0$, and an iteration limit $N \ge 1$ is given, then we have
\beq
\bbe[\|\nabla \Psi(x^{md}_R)\|^2] \le \frac{21L_\Psi[\Psi(x_0) - \Psi^*]}{4N}+\frac{2\sigma}{\sqrt N} \left(\frac{\Psi(x_0) - \Psi^*}{\tilde D} + L_\Psi \tilde  D\right) =:\cU_N.\label{main_nocvx2-SA}
\eeq
\item [b)] Assume that $\Psi(\cdot)$ is convex
and that an optimal solution $x^*$ exists for problem \eqnok{NLP}. If $\{\alpha_k\}$ is set to
\eqnok{def_alpha_beta}, $\{p_k\}$ is set to \eqnok{prob_fun_cvx}, $\{\beta_k\}$ and $\{\lambda_k\}$ are set to
\beqa
\beta_k &=& \min \left\{\frac{1}{2 L_\Psi},  \left(\frac{\tilde D^2}{L_\Psi^2 \sigma^2 N^3}\right)^\frac{1}{4}\right\} \label{def_barbeta-SA} \\
\mbox{and} \ \ \ \lambda_k &=& \frac{k L_\Psi \beta_k^2}{2}, \ \ k \ge 1,  \label{def_lambda1-SA}
\eeqa
for some $\tilde D > 0$, and an iteration limit $N \ge 1$ is given, then we have
\begin{align}
\bbe[\|\nabla \Psi(x^{md}_R)\|^2] &\le \frac{96 L_\Psi^2 \|x_0-x^*\|^2}{N^3} + \frac{L_\Psi^\frac{1}{2} \sigma^\frac{3}{2}}{N^\frac{3}{4}}
\left( \frac{12\|x_0-x^*\|^2}{\tilde D^\frac{3}{2}} + 2 \tilde D^\frac{1}{2}\right),\label{main_cvx2-SA} \\
\bbe[\Psi(x^{ag}_R) - \Psi(x^*)] &\le \frac{48 L_\Psi \|x_0 - x^*\|^2}{N^2}
+ \frac{12 \sigma}{\sqrt{N}} \left( \frac{\|x_0 - x^*\|^2}{\tilde D} + \tilde D \right). \label{main_cvx3-SA}
\end{align}
\end{itemize}
\end{corollary}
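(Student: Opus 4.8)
The plan is to derive both parts as direct specializations of Theorem~\ref{main_nocvx-SA_theom}, substituting the explicit value $\Gamma_k = 2/(k(k+1))$ from \eqnok{def_Gamma2} and estimating the resulting sums exactly as in the proof of Corollary~\ref{nocvx_crly}. The only genuinely new ingredient is the bookkeeping needed to separate the ``deterministic'' error terms (of order $N^{-3}$ or $N^{-2}$) from the ``stochastic'' ones (of order $N^{-3/4}$ or $N^{-1/2}$) that arises from the choice $\beta_k = \min\{\cdot,\cdot\}$ in \eqnok{def_beta-SA} and \eqnok{def_barbeta-SA}.

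For part a) I would first check that the stepsize policy satisfies the hypothesis \eqnok{def_Ck} of Theorem~\ref{main_nocvx-SA_theom}.a). Since $\beta_k$ is now constant with $L_\Psi\beta_k \le 8/21$ (rather than $=1/2$ as in Corollary~\ref{nocvx_crly}), the computation leading to \eqnok{nocvx5} yields the sharper bound $C_k \ge 1/2 > 0$, and hence $\lambda_k C_k \ge \beta_k/2$, so that $\sum_{k=1}^N \lambda_k C_k \ge N\beta_k/2$. Using $0 \le \lambda_k - \beta_k \le \alpha_k\beta_k/4$ from \eqnok{def_lambda}, the estimate $\sum_{\tau=k}^N \Gamma_\tau \le 2/k$ from \eqnok{nocvx4}, and $\alpha_k = 2/(k+1)$, I would show that the bracketed factor multiplying $\|\delta_k\|^2$ in \eqnok{main_nocvx-SA} is bounded by an absolute constant, and hence that the whole noise sum is ${\cal O}(N\beta_k^2)$. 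Plugging these two bounds into \eqnok{main_nocvx-SA} gives a bound of the form $2(\Psi(x_0)-\Psi^*)/(N\beta_k) + cL_\Psi\sigma^2\beta_k$. The final step is to split the reciprocal via $\beta_k^{-1} \le 21L_\Psi/8 + \sigma\sqrt N/\tilde D$ in the first term and to use $\beta_k \le \tilde D/(\sigma\sqrt N)$ in the second, which produces exactly the three terms of \eqnok{main_nocvx2-SA}.

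For part b) the first task is to verify the hypotheses \eqnok{stepsize_equal} and \eqnok{stepsize_assum2}. With $\lambda_k = kL_\Psi\beta_k^2/2$ one computes $\alpha_k\lambda_k = \tfrac{k}{k+1}L_\Psi\beta_k^2 < L_\Psi\beta_k^2$ and $\alpha_k/(\lambda_k\Gamma_k) = 2/(L_\Psi\beta_k^2)$, so the latter sequence is constant and \eqnok{stepsize_equal} holds, while $\beta_k \le 1/(2L_\Psi) < 1/L_\Psi$ gives the remaining condition. Then, substituting $\lambda_1 = L_\Psi\beta_k^2/2$ together with the summation bounds $N^2(N+1)/12 \le \sum_{k=1}^N \Gamma_k^{-1} = N(N+1)(N+2)/6$ (the lower bound is the one used in \eqnok{check_bnd1}) into \eqnok{main_cvx-SA}, and bounding the nested sum in \eqnok{main_cvx-SA1} by $\sum_{k=1}^N\sum_{j=1}^k \Gamma_j^{-1} \le N\sum_{j=1}^N \Gamma_j^{-1}$, reduces both right-hand sides to expressions in $\beta_k$ only. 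As in part a), I would finish by splitting the min: using $\beta_k^{-p} \le (2L_\Psi)^p + (L_\Psi^2\sigma^2 N^3/\tilde D^2)^{p/4}$ (with $p=3$ for \eqnok{main_cvx2-SA} and $p=2$ for \eqnok{main_cvx3-SA}) on the initial-distance terms, and $\beta_k \le (\tilde D^2/(L_\Psi^2\sigma^2 N^3))^{1/4}$ on the variance terms, to obtain \eqnok{main_cvx2-SA} and \eqnok{main_cvx3-SA}.

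The routine part is purely the summation arithmetic, which mirrors Corollary~\ref{nocvx_crly}. I expect the main obstacle to be organizing the case split induced by $\beta_k = \min\{\cdot,\cdot\}$ so that each power of $\beta_k$ is relaxed against the ``right'' branch of the minimum, i.e. against the ${\cal O}(1/L_\Psi)$ branch for the terms that should decay like a power of $N$, and against the $\tilde D/(\sigma\sqrt N)$-type branch for the terms carrying $\sigma^2$.
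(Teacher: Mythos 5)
Your proposal follows the paper's own proof essentially step for step: verify the stepsize conditions, exploit that $\beta_k$ is constant to collapse the sums in Theorem~\ref{main_nocvx-SA_theom} (with $C_k \ge 1/2$, $\lambda_k C_k\ge\beta_k/2$ and the noise coefficient bounded by $2\beta_k^2$ in part a), and then split the $\min$ defining $\beta_k$ so that powers of $\beta_k^{-1}$ are charged to the $L_\Psi$ branch and powers of $\beta_k$ to the $\sigma$ branch. The one small correction is that the lower bound extracted from \eqnok{check_bnd1} is $\sum_{k=1}^N\Gamma_k^{-1}\ge N^2(N+1)/6\ge N^3/6$ rather than $N^2(N+1)/12$; with your factor-of-two-weaker version the leading constants in \eqnok{main_cvx2-SA} and \eqnok{main_cvx3-SA} would come out as $192$ and $96$ instead of $96$ and $48$.
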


\vgap

\begin{proof}
We first show part a). It follows from \eqnok{def_lambda}, \eqnok{nocvx5}, and \eqnok{def_beta-SA} that
\[
C_k \ge 1- \frac{21}{16} L_\Psi \beta_k \ge \frac{1}{2} > 0 \ \ \mbox{and} \ \ \lambda_k C_k \ge \frac{\beta_k}{2}.
\]
Also by \eqnok{def_lambda}, \eqnok{def_Gamma2}, \eqnok{nocvx4}, and \eqnok{def_beta-SA}, we have
\begin{align*}
\lambda_k^2 \left[1+ \frac{(\lambda_k-\beta_k)^2}{\alpha_k \Gamma_k \lambda_k^2}\left(\sum_{\tau=k}^{N} \Gamma_{\tau}\right)\right]
&\le \lambda_k^2 \left[1+ \frac{1}{\alpha_k \Gamma_k \lambda_k^2}
\left(\frac{\alpha_k \beta_k}{4}\right)^2 \frac{2}{k}\right] = \lambda_k^2+ \frac{\beta_k^2}{8}\\
&\le \left[\left(1+\frac{\alpha_k}{4}\right)^2 + \frac{1}{8}\right] \beta_k^2 \le 2 \beta_k^2
\end{align*}
for any $k \ge 1$. These observations together with \eqnok{main_nocvx-SA} then imply that
\begin{align*}
\bbe[\|\nabla \Psi(x^{md}_R)\|^2] &\le \frac{2} {\sum_{k=1}^N \beta_k} \left(
\Psi(x_0) - \Psi^* +  L_\Psi \sigma^2 \sum_{k=1}^N \beta_k^2 \right)\\
&\le\frac{2[\Psi(x_0)-\Psi^*]}{N\beta_1}+ 2 L_\Psi \sigma^2 \beta_1 \nn \\
&\le\frac{2[\Psi(x_0)-\Psi^*]}{N}\left\{\frac{21 L_\Psi}{8}+\frac{\sigma \sqrt N}{\tilde D}\right\}
 +  \frac{2 L_\Psi \tilde D  \sigma}{\sqrt N},
\end{align*}
which implies \eqnok{def_beta-SA}.

We now show part b).
It can be easily checked that \eqnok{stepsize_equal} and \eqnok{stepsize_assum2} hold
in view of \eqnok{def_barbeta-SA} and \eqnok{def_lambda1-SA}.
By \eqnok{def_Gamma2} and \eqnok{def_barbeta-SA}, we have
\begin{align}
\sum_{k=1}^N \Gamma_k^{-1} \beta_k (1-L_\Psi \beta_k) & \ge \frac{1}{2} \sum_{k=1}^N \Gamma_k^{-1} \beta_k
= \frac{\beta_1}{2} \sum_{k=1}^N \Gamma_k^{-1}, \label{temp_rel1}\\
\sum_{k=1}^N \Gamma_k^{-1} &\ge \sum_{k=1}^N \frac{k^2}{2}=\frac{1}{12} N (N+1) (2 N+1) \ge \frac{1}{6} N^3. \label{temp_rel2}
\end{align}
Using these observations,  \eqnok{def_Gamma2}, \eqnok{main_cvx-SA}, \eqnok{def_barbeta-SA},
and \eqnok{def_lambda1-SA}, we have
\begin{align*}
\bbe[\|\nabla \Psi(x^{md}_R)\|^2] &\le \frac{2}{\beta_1\sum_{k=1}^N \Gamma_k^{-1}}
\left(\frac{\|x_0-x^*\|^2}{L_\Psi \beta_1^2} + L_\Psi \sigma^2 \beta_1^2 \sum_{k=1}^N \Gamma_k^{-1} \right) \\
&= \frac{2 \|x_0-x^*\|^2}{L_\Psi \beta_1^3 \sum_{k=1}^N \Gamma_k^{-1}} + 2 L_\Psi \sigma^2 \beta_1
\le \frac{12 \|x_0-x^*\|^2}{L_\Psi N^3 \beta_1^3 } + 2 L_\Psi \sigma^2 \beta_1 \\
&\le \frac{96 L_\Psi^2 \|x_0-x^*\|^2}{N^3} + \frac{L_\Psi^\frac{1}{2} \sigma^\frac{3}{2}}{N^\frac{3}{4}}
\left( \frac{12\|x_0-x^*\|^2}{\tilde D^\frac{3}{2}} + 2 \tilde D^\frac{1}{2}\right).
\end{align*}
Also observe that by \eqnok{def_Gamma2} and \eqnok{def_barbeta-SA}, we have
\[
1 - L_\Psi \beta_k \le 1 \ \ \mbox{and} \ \
\sum_{j=1}^k \Gamma_j^{-1} = \frac{1}{2} \sum_{j=1}^k j (j+1) \le \sum_{j=1}^k j^2 \le k^3
\]
for any $k \ge 1$. Using these observations, \eqnok{main_cvx-SA1},
\eqnok{def_barbeta-SA}, \eqnok{temp_rel1}, and \eqnok{temp_rel2}, we obtain
\begin{align*}
\bbe[\Psi(x^{ag}_R) - \Psi(x^*)] & \le \frac{2}{\sum_{k=1}^N \Gamma_k^{-1}}
\left[N(2\lambda_1)^{-1} \|x_0-x^*\|^2 + L_\Psi \sigma^2 \beta_1^2 \sum_{k=1}^N k^3 \right] \nn \\
& \le \frac{12 \|x_0 - x^*\|^2}{N^2 L_\Psi \beta_1^2} + \frac{12 L_\Psi \sigma^2 \beta_1^2}{N^3}  \sum_{k=1}^N k^3 \nn \\
& \le \frac{12 \|x_0 - x^*\|^2}{N^2 L_\Psi \beta_1^2} + 12 L_\Psi \sigma^2 \beta_1^2 N \nn \\
&\le \frac{48 L_\Psi \|x_0 - x^*\|^2}{N^2} + \frac{12 \sigma}{N^\frac{1}{2}} \left( \frac{\|x_0 - x^*\|^2}{\tilde D} + \tilde D \right).
\end{align*}
\end{proof}

\vgap

We now add a few remarks about the results obtained in Corollary~\ref{nocvx-SA_crly}.
First, note that, the stepsizes $\{\beta_k\}$ in the above corollary depend on the parameter $\tilde D$.
While the RSAG method converges for any $\tilde D>0$, by minimizing the RHS of \eqnok{main_nocvx2-SA} and \eqnok{main_cvx3-SA},
the optimal choices of $\tilde D$ would be $\sqrt{[\Psi(x^{ag}_0) - \Psi(x^*)]/L_\Psi}$ and $\|x_0-x^*\|$,
respectively, for solving nonconvex and convex smooth SP problems. With such selections for $\tilde D$, the bounds
in \eqnok{main_nocvx2-SA}, \eqnok{main_cvx2-SA}, and \eqnok{main_cvx3-SA}, respectively, reduce to
\begin{align}
\bbe[\|\nabla \Psi(x^{md}_R)\|^2] &\le \frac{21L_\Psi[\Psi(x_0) - \Psi^*]}{4N}+\frac{4\sigma[L_\Psi(\Psi(x_0) - \Psi^*)]^\frac{1}{2}}{\sqrt{N}},\\
\bbe[\|\nabla \Psi(x^{md}_R)\|^2] &\le \frac{96 L_\Psi^2 \|x_0-x^*\|^2}{N^3}
+ \frac{14 (L_\Psi \|x_0-x^*\|)^\frac{1}{2} \sigma^\frac{3}{2}}{N^\frac{3}{4}}, \label{convex_best_known}
\end{align}
and
\beq \label{convex_optimal}
\bbe[\Psi(x^{ag}_R) - \Psi(x^*)] \le \frac{48 L_\Psi \|x_0-x^*\|^2}{ N^2}+\frac{24\|x_0-x^*\| \sigma }{\sqrt{N}}.
\eeq
Second, the rate of convergence of the RSAG algorithm in \eqnok{main_nocvx2-SA}
for general nonconvex problems is the same as that of the RSG method \cite{GhaLan12} for smooth nonconvex SP problems.
However, if the problem is convex, then the complexity of the RSAG algorithm will be significantly better than
the latter algorithm. More specifically, in view of \eqnok{convex_optimal}, the RSAG is an optimal
method for smooth stochastic optimization~\cite{Lan10-3}, while the rate of convergence of the
RSG method is only nearly optimal. Moreover, in view of \eqnok{main_cvx2-SA}, if $\Psi(\cdot)$ is convex, then
the number of iterations performed by the RSAG algorithm to find an $\epsilon$-solution of \eqnok{NLP},
i.e., a point $\bar x$ such that $\bbe[ \|\nabla \Psi(\bar x)\|^2] \le \epsilon$, can be bounded by
\[
{\cal O}\left\{\left(\frac{1}{\epsilon^\frac{1}{3}}+\frac{\sigma^2}{\epsilon^\frac{4}{3}}\right)(L_\Psi \|x_0-x^*\|)^\frac{2}{3}\right\}.
\]
To the best of our knowledge, this complexity result seems to be new in the literature.

\vgap

In addition to the aforementioned expected complexity results of the RSAG method,
we can establish their associated large deviation properties. For example, by Markov's inequality
and \eqnok{main_nocvx2-SA}, we have
\beq \label{nocvx-SA_prob}
\prob\left\{
\|\nabla  \Psi(x^{md}_R)\|^2 \ge \lambda \cU_N \right\}
\le \frac{1}{\lambda} \ \ \forall \lambda > 0,
\eeq
which implies that the total number of calls to the $\cal{SO}$ performed by the RSAG method for finding an {\sl $(\epsilon, \Lambda)$-solution}
of problem \eqnok{NLP}, i.e., a point $\bar x$ satisfying
$\prob\{\|\nabla  \Psi(\bar x)\|^2 \le \epsilon\} \ge 1 - \Lambda$
for some $\epsilon >0$ and $\Lambda \in (0,1)$, after disregarding a few constant factors,
can be bounded by
\beq \label{nocvx-SA_prob1}
{\cal O} \left\{
\frac{1}{\Lambda \epsilon} + \frac{\sigma^2}{\Lambda^2 \epsilon^2} \right\}.
\eeq
To improve the dependence of the above bound on the confidence level $\Lambda$, we can design a variant of the RSAG method which has two phases: optimization and post-optimization phase. The optimization phase consists of independent runs of the RSAG method to generate a list of candidate solutions
and the post-optimization phase then selects a solution from the generated candidate solutions in the
optimization phase (see \cite[Subsection 2.2]{GhaLan12} for more details).

\subsection{Minimization of nonconvex stochastic composite functions}\label{comp_nocvx-SA_sec}
In this subsection, we consider the stochastic composite problem \eqnok{comp_nocvx_prblm},
which satisfies both Assumptions~\ref{assump_st_grad} and~\ref{assump_bnd}.
Our goal is to show that under the above assumptions,
we can choose the same aggressive stepsize policy in the RSAG method
no matter if the objective function $\Psi(\cdot)$ in \eqnok{comp_nocvx_prblm}
is convex or not.

We will modify the RSAG method in Algorithm~\ref{algRSAG}
by replacing the stochastic gradient $\nabla \Psi(x^{md}_k, \xi_k)$ with
\beq \label{def_Gbar}
\bar G_k = \frac{1}{m_k} \sum_{i=1}^{m_k} G(x^{md}_k,\xi_{k,i})
\eeq
for some $m_k \ge 1$, where $G(x^{md}_k, \xi_{k,i}), i=1,\ldots,m_k$
are the stochastic gradients returned
by the $i$-th call to the $\SO$ at iteration $k$. Such a mini-batch approach has
been used for nonconvex stochastic composite optimization
in \cite{GhaLanZhang13-1,DangLan13-1}. The modified RSAG algorithm is formally
described as follows.

\begin{algorithm} [H]
	\caption{The RSAG algorithm for stochastic composite optimization}
	\label{algRSAG2}
	\begin{algorithmic}

\STATE
Replace \eqnok{Ne3-SA} and \eqnok{Ne4-SA},
respectively, in Step 2 of Algorithm~\ref{algRSAG} by
\beqa
x_k &=& \cP(x_{k-1}, \bar G_k, \lambda_k), \label{Ne1-SA}\\
x^{ag}_k &=& \cP(x^{md}_k, \bar G_k, \beta_k), \label{Ne2-SA}
\eeqa
where $\bar G_k$ is defined in \eqnok{def_Gbar} for some $m_k \ge 1$.
\end{algorithmic}
\end{algorithm}

A few remarks about the above RSAG algorithm are in place. First, note that
by calling the $\SO$ multiple times at each iteration, we can obtain
a better estimator for $\nabla \Psi(x^{md}_k)$ than the one obtained by
using one call to the $\SO$ as in Algorithm~\ref{algRSAG}. More
specifically, under Assumption~\ref{assump_st_grad}, we have
\beqa
\bbe[\bar G_k] &=& \frac{1}{m_k}
\sum_{i=1}^{m_k} \bbe[G(x^{md}_k,\xi_{k,i})]=\nabla \Psi(x^{md}_k), \nn \\
\bbe[\|\bar G_k-\nabla \Psi(x^{md}_k)\|^2]
&=&\frac{1}{m_k^2} \bbe \left[\|\sum_{i=1}^{m_k} [G(x^{md}_k,\xi_{k,i})-\nabla \Psi(x^{md}_k)]\|^2\right] \le \frac{\sigma^2}{m_k}, \label{dec_sigma}
\eeqa
where the last inequality follows from 
\cite[p.11]{GhaLanZhang13-1}. Thus, by increasing $m_k$, we
can decrease the error existing in the estimation of $\nabla \Psi(x^{md}_k)$.
We will discuss the appropriate choice of $m_k$ later in this subsection.
Second, since we do not have access to $\nabla \Psi(x^{md}_k)$,
we cannot compute the exact gradient mapping, i.e.,
$\cG(x^{md}_k, \nabla \Psi(x^{md}_k), \beta_k)$ as
the one used in Subsection~\ref{comp_nocvx_sec} for composite optimization.
However, by
\eqnok{def_proj_grad} and \eqnok{Ne1-SA}, we can compute an approximate
stochastic gradient mapping given by
$
\cG(x^{md}_k,\bar G_k,\beta_k).
$
Indeed, by Lemma~\ref{proj_grad_lips} and \eqnok{dec_sigma} , we have
\beq \label{lips-SA}
\bbe[\|\cG(x^{md}_k,\nabla \Psi(x^{md}_k),\beta_k)-\cG(x^{md}_k,\bar G_k,\beta_k)\|^2]
\le \bbe[\|\bar G_k-\nabla \Psi(x^{md}_k)\|^2]
\le \frac{\sigma^2}{m_k}.
\eeq

\vgap

We are ready to describe the main convergence properties of Algorithm~\ref{algRSAG2}
for solving nonconvex stochastic composite problems.

\begin{theorem} \label{main_comp_nocvx-SA_theom}
Suppose that $\{\alpha_k\}$, $\{\beta_k\}$, $\{\lambda_k\}$,
and $\{p_k\}$ in Algorithm~\ref{algRSAG2} satisfy \eqnok{stepsize_assum1}, \eqnok{stepsize_equal}, and \eqnok{prob_fun_cvx}.
Then under Assumptions~\ref{assump_st_grad} and~\ref{assump_bnd}, we have
\begin{align}
\bbe[\|\cG(x^{md}_R,\nabla \Psi(x^{md}_R),\beta_R)\|^2] &\le
8 \left[\sum_{k=1}^{N} \Gamma_k^{-1} \beta_k (1-L_\Psi\beta_k)\right]^{-1}
\left[\frac{\|x_0-x^*\|^2}{2\lambda_1}+\frac{L_f}{\Gamma_N}(\|x^*\|^2 + 2 M^2) \right.
\nn \\
&\qquad \qquad \qquad +\left. \sigma^2 \sum_{k=1}^{N}\frac{\beta_k \left(4+ (1-L_\Psi\beta_k)^2\right)
}{4\Gamma_k (1-L_\Psi\beta_k)m_k}\right], \label{main_comp_nocvx-SA_theom1}
\end{align}
where the expectation is taken with respect to $R$ and $\xi_{k,i}$,
$k=1,..,N$, $i=1,...,m_k$. If, in addition, $L_f = 0$, then we have
\begin{align}
\bbe[\Phi(x^{ag}_R)-\Phi(x^*)] &\le \left[\sum_{k=1}^N \Gamma_k^{-1} \beta_k (1 - L_\Psi \beta_k)\right]^{-1}
\left[\sum_{k=1}^N \beta_k (1 - L_\Psi \beta_k)
\left( \frac{\|x_0-x^*\|^2}{2 \lambda_1}  \right. \right. \nn\\
& \qquad \qquad \qquad + \left. \left. \sigma^2 \sum_{j=1}^{k}\frac{\beta_j (4+ (1-L_\Psi\beta_j)^2)}{4\Gamma_j (1-L_\Psi\beta_j)m_j} \right)\right]
, \label{cvx_fun-SA}
\end{align}
where $\Phi(x) \equiv \Psi(x) + \cX(x)$.
\end{theorem}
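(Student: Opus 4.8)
The plan is to follow the deterministic composite analysis of Theorem~\ref{main_comp_nocvx_theom} almost verbatim, replacing the exact gradient by the mini-batch estimator $\bar G_k$ in the two prox-subproblems \eqnok{Ne1-SA}--\eqnok{Ne2-SA}, and then to control the stochastic error that this substitution introduces. Set $\delta_k := \bar G_k - \nabla\Psi(x^{md}_k)$ and write $\cG_k := \cG(x^{md}_k,\nabla\Psi(x^{md}_k),\beta_k)$ and $\bar\cG_k := \cG(x^{md}_k,\bar G_k,\beta_k)=\beta_k^{-1}(x^{md}_k-x^{ag}_k)$. Relations \eqnok{main_comp_nocvx_thoem-p0} and \eqnok{main_comp_nocvx_thoem-p1} carry over unchanged, since they only invoke the smoothness of $\Psi$ and Lemma~\ref{comp_psi}. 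The prox optimality inequalities for \eqnok{Ne1-SA}--\eqnok{Ne2-SA} now read with $\langle\bar G_k,\cdot\rangle$ in place of $\langle\nabla\Psi(x^{md}_k),\cdot\rangle$; substituting $\bar G_k=\nabla\Psi(x^{md}_k)+\delta_k$ and choosing the prox-center $x=\alpha_k x_k+(1-\alpha_k)x^{ag}_{k-1}$ exactly as in \eqnok{main_comp_nocvx_thoem-p2} reproduces that bound except for one extra inner product. Combining with \eqnok{main_comp_nocvx_thoem-p0}--\eqnok{main_comp_nocvx_thoem-p1} then yields the stochastic analogue of the per-iteration recursion, namely the deterministic recursion with the additional term $-\langle\delta_k,\,x^{ag}_k-\alpha_k x-(1-\alpha_k)x^{ag}_{k-1}\rangle$ on its right-hand side.

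The crucial step is to decompose this noise term via the identity $x^{ag}_k-\alpha_k x-(1-\alpha_k)x^{ag}_{k-1}=(x^{ag}_k-x^{md}_k)+\alpha_k(x_{k-1}-x)$, which follows from \eqnok{Ne}. Once $x=x^*$ is fixed, the piece $-\alpha_k\langle\delta_k,x_{k-1}-x^*\rangle$ is a martingale difference, because $x_{k-1}$ depends only on $\delta_1,\dots,\delta_{k-1}$, and hence vanishes in expectation. The remaining piece $-\langle\delta_k,x^{ag}_k-x^{md}_k\rangle$ is \emph{not} a martingale difference, since $x^{ag}_k$ depends on $\delta_k$ through $\bar G_k$; this is the main obstacle of the proof.

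To dispose of it I would first apply Young's inequality, $-\langle\delta_k,x^{ag}_k-x^{md}_k\rangle\le\tfrac14(\tfrac1{\beta_k}-L_\Psi)\|x^{ag}_k-x^{md}_k\|^2+\tfrac{\beta_k}{1-L_\Psi\beta_k}\|\delta_k\|^2$, so that the first summand absorbs half of the negative coefficient $-\tfrac12(\tfrac1{\beta_k}-L_\Psi)\|x^{ag}_k-x^{md}_k\|^2$ already present in the recursion, leaving $-\tfrac14(\tfrac1{\beta_k}-L_\Psi)\|x^{ag}_k-x^{md}_k\|^2=-\tfrac{\beta_k(1-L_\Psi\beta_k)}4\|\bar\cG_k\|^2$. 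Since we want a bound on the \emph{exact} gradient mapping, I would then pass from $\bar\cG_k$ to $\cG_k$ inside this negative term using $\|\bar\cG_k\|^2\ge\tfrac12\|\cG_k\|^2-\|\bar\cG_k-\cG_k\|^2$ together with the Lipschitz estimate $\|\bar\cG_k-\cG_k\|\le\|\delta_k\|$ of Lemma~\ref{proj_grad_lips}. This replaces the negative term by $-\tfrac{\beta_k(1-L_\Psi\beta_k)}8\|\cG_k\|^2$ and contributes an extra $\tfrac{\beta_k(1-L_\Psi\beta_k)}4\|\delta_k\|^2$; adding the Young residual gives the total coefficient $\tfrac{\beta_k}{1-L_\Psi\beta_k}+\tfrac{\beta_k(1-L_\Psi\beta_k)}4=\tfrac{\beta_k(4+(1-L_\Psi\beta_k)^2)}{4(1-L_\Psi\beta_k)}$ on $\|\delta_k\|^2$, which is exactly the factor appearing in \eqnok{main_comp_nocvx-SA_theom1}--\eqnok{cvx_fun-SA}, while the halving in each of the two conversions accounts for the leading constant $8$.

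With the noise controlled, I would divide by $\Gamma_k$, apply Lemma~\ref{Gamma_division}, telescope the distance terms through \eqnok{sum_dist}, and bound the $L_f$-terms by Assumption~\ref{assump_bnd} exactly as in \eqnok{bnd_itr}. Taking expectation (the martingale terms drop out and $\bbe[\|\delta_k\|^2]\le\sigma^2/m_k$ by \eqnok{dec_sigma}) produces, for every index, the master inequality
\[
\frac{\bbe[\Phi(x^{ag}_N)-\Phi(x^*)]}{\Gamma_N}
+\frac18\sum_{k=1}^N\frac{\beta_k(1-L_\Psi\beta_k)}{\Gamma_k}\,\bbe[\|\cG_k\|^2]
\le\frac{\|x_0-x^*\|^2}{2\lambda_1}+\frac{L_f}{\Gamma_N}(\|x^*\|^2+2M^2)
+\sigma^2\sum_{k=1}^N\frac{\beta_k(4+(1-L_\Psi\beta_k)^2)}{4\Gamma_k(1-L_\Psi\beta_k)m_k}.
\]
Both conclusions then follow by discarding one nonnegative term. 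Dropping the functional gap (which is $\ge0$) and using the probabilities \eqnok{prob_fun_cvx}, for which $p_k\propto\Gamma_k^{-1}\beta_k(1-L_\Psi\beta_k)$, turns the weighted sum of $\bbe[\|\cG_k\|^2]$ into $\bbe[\|\cG_R\|^2]$ and, after multiplying by $8$, yields \eqnok{main_comp_nocvx-SA_theom1}. When $L_f=0$, dropping instead the nonnegative gradient-mapping sum gives $\bbe[\Phi(x^{ag}_k)-\Phi(x^*)]\le\Gamma_k\bigl[\|x_0-x^*\|^2/(2\lambda_1)+\sigma^2\sum_{j\le k}\beta_j(4+(1-L_\Psi\beta_j)^2)/(4\Gamma_j(1-L_\Psi\beta_j)m_j)\bigr]$ for every $k$, and averaging over $R$ with the same weights \eqnok{prob_fun_cvx} reproduces \eqnok{cvx_fun-SA}.
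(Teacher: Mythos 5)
Your proposal is correct and follows essentially the same route as the paper's own proof: the same decomposition of the noise term via $x^{ag}_k-\alpha_k x-(1-\alpha_k)x^{ag}_{k-1}=(x^{ag}_k-x^{md}_k)+\alpha_k(x_{k-1}-x)$, the same Young's inequality to absorb the non-martingale piece into half of the $-\tfrac12(\beta_k^{-1}-L_\Psi)\|x^{ag}_k-x^{md}_k\|^2$ term, the same passage from the stochastic to the exact gradient mapping via Lemma~\ref{proj_grad_lips}, and the same final averaging with the weights \eqnok{prob_fun_cvx}. The coefficient bookkeeping, including the factor $\beta_k(4+(1-L_\Psi\beta_k)^2)/(4(1-L_\Psi\beta_k))$ and the leading constant $8$, matches the paper exactly.
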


\begin{proof}
Denoting $\bar \delta_k \equiv \bar G_k-\nabla \Psi(x^{md}_k)$
and $\bar \delta_{[k]} \equiv \{\bar \delta_1, \ldots, \bar \delta_k\}$ for any $k \ge 1$,
and using Lemma 2 of \cite{GhaLan12-2a} for the solutions of subproblems \eqnok{Ne1-SA} and \eqnok{Ne2-SA}, we have
\begin{align}
\langle \nabla \Psi(x^{md}_k)+\bar \delta_k , x_k - x\rangle + \cX(x_k)   \le \cX(x) +
\frac{1}{2\lambda_k} \left[\|x_{k-1} - x\|^2-\|x_k -x\|^2 - \|x_k-x_{k-1}\|^2 \right], \label{comp_nocvx-SA_lemma1-1} \\
\langle \nabla \Psi(x^{md}_k)+\bar \delta_k , x^{ag}_k - x\rangle + \cX(x^{ag}_k)  \le \cX(x) +
\frac{1}{2\beta_k} \left[\|x^{md}_k - x\|^2-\|x^{ag}_k - x\|^2 -\|x^{ag}_k-x^{md}_k\|^2 \right] \label{comp_nocvx-SA_lemma1-2}
\end{align}
for any $x  \in \bbr^n$. Letting $x = \alpha_k x_k + (1-\alpha_k)x^{ag}_{k-1}$ in \eqnok{comp_nocvx-SA_lemma1-2},
we have
\begin{align}
\langle \nabla &\Psi(x^{md}_k)+\bar \delta_k , x^{ag}_k - \alpha_k x_k - (1-\alpha_k)x^{ag}_{k-1}\rangle + \cX(x^{ag}_k)  \nn \\
&\le \cX(\alpha_k x_k + (1-\alpha_k)x^{ag}_{k-1}) + \frac{1}{2\beta_k} \left[\|x^{md}_k - \alpha_k x_k - (1-\alpha_k)x^{ag}_{k-1}\|^2
-\|x^{ag}_k-x^{md}_k\|^2 \right] \nn \\
&\le \alpha_k \cX(x_k) + (1-\alpha_k) \cX(x^{ag}_{k-1}) + \frac{1}{2\beta_k}
\left[\alpha_k^2\|x_k - x_{k-1}\|^2 -\|x^{ag}_k-x^{md}_k\|^2 \right],\nn
\end{align}
where the last inequality follows from the convexity of $\cX$ and \eqnok{Ne}.
Summing up the above inequality with \eqnok{comp_nocvx-SA_lemma1-1} (with both sides multiplied by $\alpha_k$),
we obtain
\begin{align}
\langle \nabla \Psi(x^{md}_k)+\bar \delta_k , x^{ag}_k - \alpha_k x - (1-\alpha_k)x^{ag}_{k-1}\rangle  + \cX(x^{ag}_k)
\le (1-\alpha_k) \cX(x^{ag}_{k-1}) + \alpha_k \cX(x) \nn \\
+ \frac{\alpha_k}{2\lambda_k} \left[\|x_{k-1} - x\|^2-\|x_k -x\|^2\right]
+ \frac{\alpha_k (\lambda_k \alpha_k - \beta_k)}{2\beta_k \lambda_k} \|x_k - x_{k-1}\|^2
- \frac{1}{2\beta_k}\|x^{ag}_k - x^{md}_k\|^2\nn \\
\le (1-\alpha_k) \cX(x^{ag}_{k-1}) + \alpha_k \cX(x) + \frac{\alpha_k}{2\lambda_k} \left[\|x_{k-1} - x\|^2-\|x_k -x\|^2\right]
- \frac{1}{2\beta_k}\|x^{ag}_k - x^{md}_k\|^2, \label{main_comp_nocvx-SA_thoem-p2}
\end{align}
where the last inequality follows from the assumption that $\alpha_k \lambda_k \le \beta_k$.
Combining the above relation with \eqnok{main_comp_nocvx_thoem-p0} and \eqnok{main_comp_nocvx_thoem-p1},
and using the definition $\Phi(x) \equiv \Psi(x) + \cX(x)$, we have
\begin{align}
\Phi(x^{ag}_k)& \le(1-\alpha_k)\Phi(x^{ag}_{k-1})+\alpha_k \Phi(x) - \frac{1}{2}\left(\frac{1}{\beta_k}-L_\Psi\right) \|x^{ag}_k-x^{md}_k\|^2+\langle \bar \delta_k, \alpha_k (x-x_{k-1})+x^{md}_k-x^{ag}_k \rangle\nn \\
&+\frac{\alpha_k}{2\lambda_k} \left[\|x_{k-1} - x\|^2-\|x_k - x\|^2 \right]
+\frac{L_f \alpha_k}{2}\|x^{md}_k - x\|^2
+\frac{L_f \alpha_k^2 (1-\alpha_k)}{2} \|x^{ag}_{k-1}-x_{k-1}\|^2 \nn \\
&\le (1-\alpha_k)\Phi(x^{ag}_{k-1})+\alpha_k \Phi(x)+ \langle \bar \delta_k, \alpha_k (x-x_{k-1})\rangle - \frac{1}{4}\left(\frac{1}{\beta_k}-L_\Psi\right) \|x^{ag}_k-x^{md}_k\|^2 +\frac{\beta_k \|\bar \delta_k\|^2}{1-L_\Psi \beta_k} \nn \\
&+\frac{\alpha_k}{2\lambda_k} \left[\|x_{k-1} - x\|^2-\|x_k - x\|^2 \right]
+\frac{L_f \alpha_k}{2}\|x^{md}_k - x\|^2
+\frac{L_f \alpha_k^2 (1-\alpha_k)}{2} \|x^{ag}_{k-1}-x_{k-1}\|^2, \nn
\end{align}
where the last inequality follows from the Young's inequality.
Subtracting $\Phi(x)$ from both sides of the above inequality, re-arranging the terms, and
using Lemma~\ref{Gamma_division} and \eqnok{sum_dist}, we obtain
\begin{align}
&\frac{\Phi(x^{ag}_N)-\Phi(x)}{\Gamma_N}+\sum_{k=1}^{N}\frac{1-L_\Psi\beta_k}{4\beta_k \Gamma_k} \|x^{ag}_k-x^{md}_k\|^2 \le \frac{\|x_0-x\|^2}{2\lambda_1}+\sum_{k=1}^{N}\frac{\alpha_k}{\Gamma_k}\langle \bar \delta_k, x-x_{k-1} \rangle \nn \\
&+\frac{L_f}{2} \sum_{k=1}^{N}
\frac{\alpha_k }{\Gamma_k} [\|x^{md}_k - x\|^2+ \alpha_k (1-\alpha_k)\|x^{ag}_{k-1}-x_{k-1}\|^2]
+ \sum_{k=1}^{N} \frac{\beta_k \|\bar \delta_k\|^2}{\Gamma_k(1-L_\Psi\beta_k)} \ \ \forall x \in \bbr^n. \nn
\end{align}
Letting $x=x^*$ in the above inequality, and using \eqnok{sum_Gamma} and \eqnok{bnd_itr}, we have
\beqa
\frac{\Phi(x^{ag}_N)-\Phi(x^*)}{\Gamma_N}+\sum_{k=1}^{N}\frac{1-L_\Psi\beta_k}{4\beta_k \Gamma_k} \|x^{ag}_k-x^{md}_k\|^2
&\le& \frac{\|x_0-x^*\|^2}{2\lambda_1}+\sum_{k=1}^{N}\frac{\alpha_k}{\Gamma_k}\langle \bar \delta_k, x^*-x_{k-1} \rangle \nn \\
&+& \frac{L_f}{\Gamma_N}(\|x^*\|^2 + 2 M^2)
+ \sum_{k=1}^{N} \frac{\beta_k \|\bar \delta_k\|^2}{\Gamma_k (1-L_\Psi\beta_k)} . \nn \label{main_comp_nocvx-SA_thoem-p1}
\eeqa
Taking expectation from both sides of the above inequality, noting that under Assumption~\ref{assump_st_grad},
$\bbe[\langle \bar \delta_k, x^*-x_{k-1} \rangle | \bar \delta_{[k-1]}] = 0$,
and using \eqnok{dec_sigma} and the definition of the gradient mapping
in \eqnok{def_proj_grad}, we conclude
\begin{align}
&\frac{\bbe_{\bar \delta_{[N]}}[\Phi(x^{ag}_N)-\Phi(x^*)]}{\Gamma_N}+\sum_{k=1}^{N}\frac{\beta_k \left[1-L_\Psi\beta_k\right]}{4\Gamma_k} \bbe_{\bar \delta_{[N]}}[\|\cG(x^{md}_k,\bar G_k,\beta_k)\|^2 \nn \\
&\le \frac{\|x_0-x^*\|^2}{2\lambda_1}+\frac{L_f}{\Gamma_N}(\|x^*\|^2 + 2 M^2)
+\sigma^2\sum_{k=1}^{N} \frac{\beta_k}{\Gamma_k (1-L_\Psi\beta_k)m_k}, \nn
\end{align}
which, together with the fact that $\bbe_{\bar \delta_{[N]}}[\|\cG(x^{md}_k,\nabla \Psi(x^{md}_k),\beta_k)\|^2]
\le 2 (\bbe_{\bar \delta_{[N]}}[\|\cG(x^{md}_k,\bar G_k,\beta_k)\|^2] + \sigma^2/m_k)$ due to
\eqnok{lips-SA}, then imply that
\begin{align}
&\frac{\bbe_{\bar \delta_{[N]}}[\Phi(x^{ag}_N)-\Phi(x)]}{\Gamma_N}+\sum_{k=1}^{N}\frac{\beta_k (1-L_\Psi\beta_k)}{8\Gamma_k} \bbe_{\bar \delta_{[N]}}[\|\cG(x^{md}_k,\nabla \Psi(x^{md}_k),\beta_k)\|^2 \nn \\
&\le \frac{\|x_0-x^*\|^2}{2\lambda_1}+\frac{L_f}{\Gamma_N}(\|x^*\|^2 + 2 M^2)
+\sigma^2 \left(\sum_{k=1}^{N} \frac{\beta_k}{\Gamma_k (1-L_\Psi\beta_k)m_k}+\sum_{k=1}^{N}\frac{\beta_k (1-L_\Psi\beta_k)}{4\Gamma_k m_k}
\right) \nn \\
&=\frac{\|x_0-x^*\|^2}{2\lambda_1}+\frac{L_f}{\Gamma_N}(\|x^*\|^2 + 2 M^2)
+\sigma^2 \sum_{k=1}^{N}\frac{\beta_k \left[4+ (1-L_\Psi\beta_k)^2\right]}{4\Gamma_k (1-L_\Psi\beta_k)m_k}.
\end{align}
Since the above relation is similar to the relation \eqnok{cvx_recur-SA}, the rest of proof is also similar to the last part of the proof for Theorem~\ref{main_nocvx-SA_theom} and hence the details are skipped.
\end{proof}

\vgap

Theorem~\ref{main_comp_nocvx-SA_theom} shows that
by using the RSAG method in Algorithm 4,
we can have a unified treatment and analysis for
stochastic composite problem \eqnok{comp_nocvx_prblm},
no matter it is convex or not.
In the next result, we specialize the results obtained in Theorem~\ref{main_comp_nocvx-SA_theom}
for some particular selections of $\{\alpha_k\}$, $\{\beta_k\}$, and $\{\lambda_k\}$.

\begin{corollary} \label{comp_nocvx-SA_crly1}
Suppose that the stepsizes $\{\alpha_k\}$, $\{\beta_k\}$, and $\{\lambda_k\}$ in Algorithm 4
are set to \eqnok{def_alpha_beta} and \eqnok{def_lambda1}, respectively, and $\{p_k\}$ is set to \eqnok{prob_fun_cvx}.
Also assume that an optimal solution $x^*$ exists for problem~\eqnok{comp_nocvx_prblm}.
Then under Assumptions~\ref{assump_st_grad} and~\ref{assump_bnd}, for any $N \ge 1$, we have
\beq \label{comp_nocvx-SA_crly1-1}
\bbe[\|\cG(x^{md}_R,\nabla \Psi(x^{md}_R),\beta_R)\|^2] \le 96 L_\Psi \left[\frac{4 L_\Psi \|x_0-x^*\|^2 }{N^2 (N+1)} +\frac{L_f}{N}(\|x^*\|^2+ 2 M^2)+\frac{3\sigma^2}{L_\Psi N^3}\sum_{k=1}^N \frac{k^2}{m_k}\right].
\eeq
If, in addition, $L_f = 0$,
then for any $N \ge 1$, we have
\beq
\bbe[\Phi(x^{ag}_R)-\Phi(x^*)] \le \frac{12 L_\Psi \|x_0-x^*\|^2}{N (N+1)}+\frac{7 \sigma^2}{ L_\Psi N^3}\sum_{k=1}^N \sum_{j=1}^k \frac{j^2}{m_j}. \label{cvx_fun-SA2}
\eeq
\end{corollary}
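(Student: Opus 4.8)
The plan is to treat this as a direct specialization of Theorem~\ref{main_comp_nocvx-SA_theom}: I would substitute the concrete stepsizes \eqnok{def_alpha_beta} and \eqnok{def_lambda1} into the two general bounds \eqnok{main_comp_nocvx-SA_theom1} and \eqnok{cvx_fun-SA}, and then simplify. The first task is to verify that these stepsizes satisfy the three hypotheses \eqnok{stepsize_assum1}, \eqnok{stepsize_equal}, and \eqnok{prob_fun_cvx} required by the theorem. This is exactly the verification already performed in the proof of Corollary~\ref{nocvx_crly}.b): with $\alpha_k = 2/(k+1)$, $\beta_k = 1/(2L_\Psi)$, and $\lambda_k = k\beta_k/2$ one has $\alpha_k\lambda_k = k\beta_k/(k+1) < \beta_k < 1/L_\Psi$ and $\alpha_k/(\lambda_k\Gamma_k)$ constant in $k$, so \eqnok{stepsize_assum1} and \eqnok{stepsize_equal} hold, while \eqnok{prob_fun_cvx} is imposed by assumption.

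Next I would record the bookkeeping identities. From \eqnok{def_Gamma2} we have $\Gamma_k = 2/[k(k+1)]$, hence $\Gamma_k^{-1} = k(k+1)/2$; moreover $\lambda_1 = 1/(4L_\Psi)$ so $1/(2\lambda_1) = 2L_\Psi$, and $1/\Gamma_N = N(N+1)/2$. Since $\beta_k = 1/(2L_\Psi)$ is constant, $1-L_\Psi\beta_k = 1/2$ throughout, which collapses every factor involving $(1-L_\Psi\beta_k)$ to a constant. The denominator common to both bounds is handled by \eqnok{check_bnd1}, which gives $\sum_{k=1}^N \Gamma_k^{-1}\beta_k(1-L_\Psi\beta_k) \ge N^2(N+1)/(24L_\Psi)$ (and for the $L_f=0$ case one may use the exact value $N(N+1)(N+2)/(24L_\Psi)$). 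Multiplying the prefactor $8$ by the reciprocal of this quantity and applying it to each simplified summand then reproduces the leading $96L_\Psi$ factor of \eqnok{comp_nocvx-SA_crly1-1}: the $\|x_0-x^*\|^2$ contribution yields the summand $4L_\Psi\|x_0-x^*\|^2/[N^2(N+1)]$ inside that factor, and the $L_f$ contribution yields $L_f(\|x^*\|^2+2M^2)/N$ after cancelling the $N(N+1)/2$ coming from $1/\Gamma_N$.

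The only step requiring genuine care is the stochastic mini-batch term. Using $\beta_k = 1/(2L_\Psi)$ and $1-L_\Psi\beta_k = 1/2$, the weight $\beta_k[4+(1-L_\Psi\beta_k)^2]/[4\Gamma_k(1-L_\Psi\beta_k)]$ reduces to a constant multiple of $k(k+1)/L_\Psi$, so the $\sigma^2$ sum in \eqnok{main_comp_nocvx-SA_theom1} becomes proportional to $\sum_{k=1}^N k(k+1)/m_k$. To match the cleaner form $\sum_{k=1}^N k^2/m_k$ appearing in \eqnok{comp_nocvx-SA_crly1-1} I would use $k(k+1)\le 2k^2$ and $N^2(N+1)\ge N^3$, which absorbs the arithmetic into the stated constant. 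The $L_f = 0$ bound \eqnok{cvx_fun-SA2} follows the same pattern applied to \eqnok{cvx_fun-SA}, except that the extra outer sum $\sum_{k=1}^N\sum_{j=1}^k$ survives, and the inequalities $j(j+1)\le 2j^2$ together with $N(N+1)(N+2)\ge N^3$ again round the constants up to $12L_\Psi$ and $7$. The main obstacle is therefore purely the constant tracking through these inequalities; I would carry the exact rational constants (such as $17/32$ in the weight and $51/16$ in the composite case) through the computation and only at the final step replace them by the round numbers $96$, $12$, and $7$.
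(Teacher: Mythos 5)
Your proposal is correct and follows essentially the same route as the paper's own proof: verify \eqnok{stepsize_assum1} and \eqnok{stepsize_equal} exactly as in Corollary~\ref{nocvx_crly}.b), substitute $\Gamma_k^{-1}=k(k+1)/2$, $\lambda_1=1/(4L_\Psi)$, and $1-L_\Psi\beta_k=1/2$ into \eqnok{main_comp_nocvx-SA_theom1} and \eqnok{cvx_fun-SA}, bound the denominator via \eqnok{check_bnd1}, and round the resulting rational constants (the paper's intermediate display indeed carries the same $17k(k+1)/(32L_\Psi m_k)$ weight you identify). Your constant tracking checks out, e.g.\ $192\cdot 17/32=102$, then $k(k+1)\le 2k^2$ and $N^2(N+1)\ge N^3$ give $204/96\le 3$ for the noise term and $51/8\le 7$ in the $L_f=0$ case, so the stated bounds follow.
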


\begin{proof}
Similar to Corollary~\ref{nocvx_crly}.b), we can easily show that \eqnok{stepsize_assum1}
and \eqnok{stepsize_equal} hold.
By \eqnok{main_comp_nocvx-SA_theom1}, \eqnok{def_alpha_beta}, \eqnok{def_lambda1},
\eqnok{def_Gamma2}, and \eqnok{check_bnd1},
we have
\begin{align*}
\bbe[\|\cG(x^{md}_R,\nabla \Psi(x^{md}_R),\beta_R)\|^2] &\le
\frac{192 L_\Psi}{N^2 (N+1)}
\left[2 L_\Psi \|x_0-x^*\|^2+\frac{N (N+1) L_f}{2}(\|x^*\|^2 + 2 M^2) \right.\\
&\qquad \qquad \qquad +\left. \sigma^2 \sum_{k=1}^{N}\frac{17 k (k+1)
}{32 L_\Psi m_k}\right],
\end{align*}
which clearly implies \eqnok{comp_nocvx-SA_crly1-1}.
By \eqnok{cvx_fun-SA}, \eqnok{def_alpha_beta}, \eqnok{def_lambda1},
\eqnok{def_Gamma2}, and \eqnok{check_bnd1},
we have
\begin{align*}
\bbe[\Phi(x^{ag}_R)-\Phi(x^*)] &\le \frac{24 L_\Psi}{N^2 (N+1)}
\left[\frac{N}{2} \|x_0-x^*\|^2 +
\frac{\sigma^2}{4 L_\Psi}
\sum_{k=1}^N \sum_{j=1}^{k}\frac{17 j(j+1)}{32 L_\Psi m_j} \right],
\end{align*}
which implies \eqnok{cvx_fun-SA2}.
\end{proof}

\vgap

Note that all the bounds in the above corollary depend on $\{m_k\}$ and they may not converge to zero for all values of $\{m_k\}$.
In particular, if $\{m_k\}$ is set to a positive integer constant, then the last terms in \eqnok{comp_nocvx-SA_crly1-1} and \eqnok{cvx_fun-SA2}, unlike the other terms, will not vanish as the algorithm advances. On the other hand, if $\{m_k\}$ is very big, then each iteration of Algorithm 4 will be expensive due to the computation of stochastic gradients. Next result provides an appropriate selection of $\{m_k\}$.
\begin{corollary} \label{comp_nocvx-SA_crly2}
Suppose that the stepsizes $\{\alpha_k\}$, $\{\beta_k\}$, and $\{\lambda_k\}$ in Algorithm 4
are set to \eqnok{def_alpha_beta} and \eqnok{def_lambda1}, respectively, and $\{p_k\}$ is set to \eqnok{prob_fun_cvx}.
Also assume that an optimal solution $x^*$ exists for problem~\eqnok{comp_nocvx_prblm}, an iteration limit $N \ge 1$ is given, and
\beq \label{def_m}
m_k =\left \lceil \frac{\sigma^2}{L_\Psi \tilde D^2} \min \left\{ \frac{k}{L_f} , \frac{k^2 N}{L_\Psi} \right\} \right \rceil, \ \ k=1,2,\ldots, N
\eeq
for some parameter $\tilde D$. Then
under Assumptions~\ref{assump_st_grad} and~\ref{assump_bnd}, we have
\beqa \label{comp_nocvx-SA_crly2-1}
\bbe[\|\cG(x^{md}_R,\nabla \Psi(x^{md}_R),\beta_R)\|^2] &\le& 96 L_\Psi \left[\frac{4 L_\Psi (\|x_0-x^*\|^2+\tilde D^2) }{N^3} +\frac{L_f (\|x^*\|^2+ 2 M^2+3 \tilde D^2)}{N}\right].\nn \\
\eeqa
If, in addition, $L_f = 0$,
then
\beq
\bbe[\Phi(x^{ag}_R)-\Phi(x^*)] \le \frac{ L_\Psi}{N^2} \left(12 \|x_0-x^*\|^2 +7 \tilde D^2\right). \label{cvx_fun-SA3}
\eeq
\end{corollary}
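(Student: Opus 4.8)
The plan is to derive both inequalities directly from Corollary~\ref{comp_nocvx-SA_crly1}, whose bounds \eqnok{comp_nocvx-SA_crly1-1} and \eqnok{cvx_fun-SA2} already hold for \emph{any} positive integer sequence $\{m_k\}$. Since the stepsize choices \eqnok{def_alpha_beta} and \eqnok{def_lambda1} are exactly those assumed there, the entire task reduces to controlling the two mini-batch sums $\sum_{k=1}^N k^2/m_k$ and $\sum_{k=1}^N \sum_{j=1}^k j^2/m_j$ once $m_k$ is specialized to \eqnok{def_m}. Thus I would first record that, because $m_k$ is a ceiling, we have $m_k \ge (\sigma^2/(L_\Psi \tilde D^2)) \min\{k/L_f, k^2 N/L_\Psi\}$, and then translate this lower bound on $m_k$ into an upper bound on $k^2/m_k$.

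The key computation is the observation that dividing $k^2$ by a minimum turns it into a maximum. Concretely,
\[
\frac{k^2}{m_k} \le \frac{L_\Psi \tilde D^2}{\sigma^2}\, \max\left\{k L_f,\ \frac{L_\Psi}{N}\right\}
\le \frac{L_\Psi \tilde D^2}{\sigma^2}\left(k L_f + \frac{L_\Psi}{N}\right).
\]
Summing over $k$ and using $\sum_{k=1}^N k = N(N+1)/2$ gives
$\sum_{k=1}^N k^2/m_k \le (L_\Psi \tilde D^2/\sigma^2)\,(L_f N(N+1)/2 + L_\Psi)$, so that the last term of \eqnok{comp_nocvx-SA_crly1-1}, namely $(3\sigma^2/(L_\Psi N^3))\sum_{k=1}^N k^2/m_k$, collapses to $3 L_f \tilde D^2 (N+1)/(2N^2) + 3 L_\Psi \tilde D^2/N^3$. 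I would then simplify with the elementary bounds $N+1 \le 2N$ and $N^2(N+1) \ge N^3$ (both valid for $N \ge 1$), which bound this contribution by $3 L_f \tilde D^2/N + 4 L_\Psi \tilde D^2/N^3$ and bound the $\|x_0-x^*\|^2$ term by $4L_\Psi\|x_0-x^*\|^2/N^3$. Grouping the $\tilde D^2$ pieces with the existing $L_f(\|x^*\|^2+2M^2)/N$ and $L_\Psi\|x_0-x^*\|^2/N^3$ terms yields precisely \eqnok{comp_nocvx-SA_crly2-1}.

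For the $L_f = 0$ case I would note that the minimum in \eqnok{def_m} is then attained at $k^2 N/L_\Psi$, so $m_j \ge \sigma^2 j^2 N/(L_\Psi^2 \tilde D^2)$ and hence $j^2/m_j \le L_\Psi^2 \tilde D^2/(\sigma^2 N)$, a bound independent of $j$. Consequently $\sum_{j=1}^k j^2/m_j \le k\,L_\Psi^2\tilde D^2/(\sigma^2 N)$ and the double sum is at most $L_\Psi^2 \tilde D^2 (N+1)/(2\sigma^2)$; plugging this into the last term of \eqnok{cvx_fun-SA2} and again using $N+1 \le 2N$ produces $7 L_\Psi \tilde D^2/N^2$, while the first term of \eqnok{cvx_fun-SA2} is bounded by $12 L_\Psi\|x_0-x^*\|^2/N^2$, giving \eqnok{cvx_fun-SA3}. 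The only delicate point in the whole argument is the $\min$-to-$\max$ inversion together with careful bookkeeping of the constants $3$, $4$, $7$, $12$ across the $N+1 \le 2N$ and $N^2(N+1)\ge N^3$ reductions; everything else is a routine summation, and no genuine obstacle is anticipated.
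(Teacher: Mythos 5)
Your proposal is correct and follows essentially the same route as the paper: both specialize Corollary~\ref{comp_nocvx-SA_crly1}, use the ceiling in \eqnok{def_m} to get $k^2/m_k \le (L_\Psi\tilde D^2/\sigma^2)\max\{kL_f,\,L_\Psi/N\}$, bound the maximum by the sum, and finish with $\sum k = N(N+1)/2$ together with $N+1\le 2N$ (and, for $L_f=0$, the same uniform bound $j^2/m_j \le L_\Psi^2\tilde D^2/(\sigma^2 N)$ on the inner sum). The constant bookkeeping in your reductions matches the paper's.
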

\begin{proof}
By \eqnok{def_m}, we have
\[
\frac{\sigma^2}{L_\Psi N^3} \sum_{k=1}^N \frac{k^2}{m_k} \le \frac{\tilde D^2}{N^3} \sum_{k=1}^N k^2 \max \left\{\frac{L_f}{k}, \frac{L_\Psi}{k^2 N}\right\} \le \frac{\tilde D^2}{N^3} \sum_{k=1}^N k^2 \left\{\frac{L_f}{k}+\frac{L_\Psi}{k^2 N}\right\} \le \frac{L_f \tilde D^2}{N}+ \frac{L_\Psi \tilde D^2}{N^3},
\]
which together with \eqnok{comp_nocvx-SA_crly1-1} imply \eqnok{comp_nocvx-SA_crly2-1}. If $L_f=0$, then due to \eqnok{def_m}, we have
\beq \label{def_m_cvx}
m_k = \left \lceil \frac{\sigma^2 k^2 N}{L_\Psi^2 \tilde D^2} \right \rceil, \ \ k=1,2,\ldots, N.
\eeq
Using this observation, we have
\[
\frac{\sigma^2}{L_\Psi N^3} \sum_{k=1}^N \sum_{j=1}^k \frac{j^2}{m_j} \le \frac{L_\Psi \tilde D^2}{N^2},
\]
which, in view of \eqnok{cvx_fun-SA2}, then implies \eqnok{cvx_fun-SA3}.
\end{proof}

\vgap

We now add a few remarks about the results obtained in Corollary~\ref{comp_nocvx-SA_crly2}. First, we conclude from \eqnok{comp_nocvx-SA_crly2-1} and Lemma~\ref{comp_nocvx_lemma3} that by running Algorithm 4 for at most
\[
{\cal O} \left\{
\left[\frac{L_\Psi^2 (\|x_0-x^*\|^2+\tilde D^2)}{\epsilon}\right]^\frac{1}{3} +
\frac{L_f L_\Psi (M^2+\|x^*\|^2+\tilde D^2)}{\epsilon}\right\}
\]
iterations, we have $-\nabla \Psi(x^{ag}_R) \in \partial \cX(x^{ag}_R)+ {\cal B}(\epsilon)$. Also at the $k$-th iteration of this algorithm, the $\SO$ is called $m_k$ times and hence the total number of calls to the $\SO$ equals to $\sum_{k=1}^N m_k$. Now, observe that by \eqnok{def_m}, we have
\beq \label{sum_m}
\sum_{k=1}^N m_k \le \sum_{k=1}^N \left(1+\frac{k \sigma^2}{L_f L_\Psi \tilde D^2}\right) \le N+\frac{\sigma^2 N^2}{L_f L_\Psi \tilde D^2}.
\eeq
Using these two observations, we conclude that the total number of calls to the $\SO$ performed by Algorithm 4 to find an $\epsilon$-stationary point of problem \eqnok{comp_nocvx_prblm} i.e., a point $\bar x$ satisfying $-\nabla \Psi(\bar x) \in \partial \cX(\bar x)+ {\cal B}(\epsilon)$ for some $\epsilon>0$, can be bounded by
\begin{align}
&{\cal O} \left\{
\left[\frac{L_\Psi^2 (\|x_0-x^*\|^2+\tilde D^2)}{\epsilon}\right]^\frac{1}{3} +
\frac{L_f L_\Psi (M^2+\|x^*\|^2+\tilde D^2)}{\epsilon}+\left[\frac{L_\Psi^\frac{1}{2} (\|x_0-x^*\|^2+\tilde D^2)\sigma^3}{L_f^\frac{3}{2} \tilde D^3 \epsilon}\right]^\frac{2}{3} \right.
\nn \\
&\qquad \qquad \qquad \qquad \qquad \qquad+\left.
\frac{L_f L_\Psi (M^2+\|x^*\|^2+\tilde D^2)^2 \sigma^2}{\tilde D^2 \epsilon^2}\right\}.\label{bnd_nocvx}
\end{align}
Second, note that there are various choices for the parameter $\tilde D$ in the definition of $m_k$. While Algorithm 4 converges for any $\tilde D$, an optimal choice would be $\sqrt{\|x^*\|^2+ M^2}$ for solving composite nonconvex SP problems, if the last term in \eqnok{bnd_nocvx} is the dominating one.
Third, due to \eqnok{cvx_fun-SA3} and \eqnok{def_m_cvx}, it can be easily shown that when $L_f=0$, Algorithm 4 possesses an optimal complexity for solving convex SP problems which is similar to the one obtained in the Subsection~\ref{nocvx-SA_sec} for smooth problems. Fourth, note that the definition of $\{m_k\}$ in Corollary~\ref{comp_nocvx-SA_crly2} depends on the iteration limit $N$. In particular, due to \eqnok{def_m}, we may call the $\SO$ many times (depending on $N$) even at the beginning of Algorithm 4.
In the next result, we specify a different choice for $\{m_k\}$ which is independent of $N$. However, the following result is slightly weaker than the one in \eqnok{comp_nocvx-SA_crly2-1} when $L_f=0$.

\begin{corollary} \label{comp_nocvx-SA_crly3}
Suppose that the stepsizes $\{\alpha_k\}$, $\{\beta_k\}$, and $\{\lambda_k\}$ in Algorithm 4
are set to \eqnok{def_alpha_beta} and \eqnok{def_lambda1}, respectively, and $\{p_k\}$ is set to \eqnok{prob_fun_cvx}.
Also assume that an optimal solution $x^*$ exists for problem~\eqnok{comp_nocvx_prblm}, and
\beq \label{def_m2}
m_k =\left \lceil \frac{\sigma^2 k}{L_\Psi \tilde D^2} \right \rceil, \ \ k=1,2,\ldots
\eeq
for some parameter $\tilde D$. Then under Assumptions~\ref{assump_st_grad} and~\ref{assump_bnd}, for any $N \ge 1$, we have
\beqa \label{comp_nocvx-SA_crly3-1}
\bbe[\|\cG(x^{md}_R,\nabla \Psi(x^{md}_R),\beta_R)\|^2] &\le& 96 L_\Psi \left[\frac{4 L_\Psi \|x_0-x^*\|^2}{N^3} +\frac{L_f (\|x^*\|^2+ 2 M^2)+3 \tilde D^2}{N}\right].\nn \\
\eeqa
\end{corollary}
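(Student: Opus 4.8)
The plan is to invoke the bound already established in Corollary~\ref{comp_nocvx-SA_crly1}, since Algorithm 4 is run here with exactly the same stepsize sequences $\{\alpha_k\}$, $\{\beta_k\}$, $\{\lambda_k\}$ and the same probabilities $\{p_k\}$; the only new ingredient is the particular choice of mini-batch sizes $\{m_k\}$ in \eqnok{def_m2}. Consequently the heavy lifting --- plugging the stepsizes into Theorem~\ref{main_comp_nocvx-SA_theom}, evaluating $\Gamma_k$ via \eqnok{def_Gamma2}, computing $\lambda_1$, and invoking the constant $\tfrac{N^2(N+1)}{24 L_\Psi}$ from \eqnok{check_bnd1} --- has already been carried out, yielding inequality \eqnok{comp_nocvx-SA_crly1-1}. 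It then remains only to control the stochastic term $\tfrac{3\sigma^2}{L_\Psi N^3}\sum_{k=1}^N k^2/m_k$ under the new $\{m_k\}$, since the first two terms of \eqnok{comp_nocvx-SA_crly1-1} already have precisely the shape appearing in \eqnok{comp_nocvx-SA_crly3-1} --- the quadratic term after replacing $N^2(N+1)$ by the smaller $N^3$, and the $L_f$ term verbatim.

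First I would discard the ceiling in \eqnok{def_m2} by using the lower bound $m_k \ge \sigma^2 k/(L_\Psi \tilde D^2)$, which is legitimate because enlarging $m_k$ only decreases $k^2/m_k$. This gives $k^2/m_k \le k L_\Psi \tilde D^2/\sigma^2$, so the remaining sum collapses to a sum of $k$:
\[
\sum_{k=1}^N \frac{k^2}{m_k} \le \frac{L_\Psi \tilde D^2}{\sigma^2}\sum_{k=1}^N k = \frac{L_\Psi \tilde D^2}{\sigma^2}\cdot\frac{N(N+1)}{2}.
\]

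Substituting this into the stochastic term yields
\[
\frac{3\sigma^2}{L_\Psi N^3}\sum_{k=1}^N \frac{k^2}{m_k} \le \frac{3\tilde D^2 (N+1)}{2N^2} \le \frac{3\tilde D^2}{N},
\]
where the last step uses $N+1 \le 2N$ for $N \ge 1$. Collecting the three pieces inside the common factor $96 L_\Psi$ then reproduces \eqnok{comp_nocvx-SA_crly3-1} exactly.

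There is no genuine obstacle; the result is a routine specialization of \eqnok{comp_nocvx-SA_crly1-1}. The one point demanding care is the direction of the ceiling inequality: one must bound $m_k$ from below (dropping, not rounding up, the ceiling) so as to bound $k^2/m_k$ from above, and then check that the leftover factor $(N+1)/(2N)$ is absorbed into the stated constant for every $N\ge 1$, not merely asymptotically. Because $m_k$ here grows only linearly in $k$ and is independent of $N$, the term carrying $\tilde D^2$ now enters as a standalone contribution of order $\tilde D^2/N$, rather than through the faster-decaying $L_\Psi \tilde D^2/N^3$ contribution obtained with the $N$-dependent batch sizes of Corollary~\ref{comp_nocvx-SA_crly2}; this is exactly the slight weakening anticipated in the remark preceding the statement.
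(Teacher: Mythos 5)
Your proposal is correct and follows essentially the same route as the paper: both specialize \eqnok{comp_nocvx-SA_crly1-1} to the batch sizes \eqnok{def_m2}, drop the ceiling to get $k^2/m_k \le kL_\Psi\tilde D^2/\sigma^2$, and bound $\sum_{k=1}^N k$ by (a multiple of) $N^2$ to absorb the stochastic term into $3\tilde D^2/N$. The paper simply uses $\sum_{k=1}^N k \le N^2$ directly where you use $N(N+1)/2 \le N^2$; the two arguments are identical in substance.
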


\begin{proof}
Observe that by \eqnok{def_m2}, we have
\[
\frac{\sigma^2}{L_\Psi N^3} \sum_{k=1}^N \frac{k^2}{m_k} \le \frac{\tilde D^2}{N^3} \sum_{k=1}^N k \le \frac{\tilde D^2}{N}.
\]
Using this observation and \eqnok{comp_nocvx-SA_crly1-1}, we obtain \eqnok{comp_nocvx-SA_crly3-1}.

\end{proof}

\vgap


Using Markov's inequality, \eqnok{sum_m}, \eqnok{def_m2}, and \eqnok{comp_nocvx-SA_crly3-1}, we conclude
that the total number of calls to the $\SO$ performed by Algorithm 4 for finding an {\sl $(\epsilon, \Lambda)$-solution} of problem \eqnok{comp_nocvx_prblm}, i.e., a point $\bar x$ satisfying
$\prob\{\|\cG(\bar x,\nabla \Psi(\bar x), c)\|^2 \le \epsilon\} \ge 1 - \Lambda$
for any $c>0$, some $\epsilon >0$ and $\Lambda \in (0,1)$,  can be bounded by \eqnok{nocvx-SA_prob1}
after disregarding a few constant factors. We can also design a two-phase method for improving the dependence
of this bound on the confidence level $\Lambda$ (see \cite[Subsection 4.2]{GhaLanZhang13-1} for more details).

\section{Concluding remarks} \label{sec_concl}
In this paper, we present a generalization of Nesterov's AG method for solving general nonlinear
(possibly nonconvex and stochastic) optimization problems. We show that the AG method employed
with proper stepsize policy possesses the best known rate of convergence for solving smooth nonconvex problems,
similar to the gradient descent method.  We also show that this algorithm allows us to
have a uniform treatment for solving a certain class of composite optimization problems no matter it is
convex or not. In particular, we show that the AG method exhibits an optimal rate of convergence
when the composite problem is convex and improves the best known rate of convergence
if it is nonconvex. Based on the AG method, we present a randomized stochastic AG
method and show that it can improve a few existing rate of convergence results
for solving nonconvex stochastic optimization problems.
To the best of our knowledge, this is the first time that Nesterov's method has been generalized
and analyzed for solving nonconvex optimization problems in the literature.


\bibliographystyle{plain}
\bibliography{../glan-bib}
\end{document}